\newtheorem{theorem}{Theorem}[section]
\newtheorem*{theorem*}{Theorem}
\newtheorem*{remark*}{Remark}
\newtheorem{lemma}[theorem]{Lemma}
\newtheorem{definition}[theorem]{Definition}
\newtheorem{remark}[theorem]{Remark}
\newtheorem{prop}[theorem]{Proposition}
\newtheorem{example}[theorem]{Example}
\newtheorem{corollary}[theorem]{Corollary}
\newcommand{\tr}{tr}
\DeclareMathOperator{\Hs}{H}
\DeclareMathOperator{\ml}{\ensuremath{\mathds{L}}}
\DeclareMathOperator{\mr}{\ensuremath{\mathds{R}}}
\newcommand{\ssubset} {\!\!\subset\! \!}
\newcommand{\Rc}{Rc}
\newcommand{\Rcu}{\ensuremath{\Rc_{\text{\fontsize{2.5}{4}\selectfont {U}}}}}
\newcommand{\Du}{\ensuremath{\Delta_{\text{\fontsize{2.5}{4}\selectfont {U}}}}}
\newcommand{\Dv}{\ensuremath{\Delta_{\text{\fontsize{2.5}{4}\selectfont {V}}}}}
\newcommand{\Zu}{\ensuremath{Z_{\text{\fontsize{2.5}{4}\selectfont {U}}}}}
\newcommand{\Zv}{\ensuremath{Z_{\text{\fontsize{2.5}{4}\selectfont {V}}}}}
\newcommand{\Hess}{\mbox{Hess}}
\DeclareMathOperator{\Mz}{\ensuremath{\Bbb{M}}}
\DeclareMathOperator{\dom}{dom}
\DeclareMathOperator{\id}{id}
\DeclareMathOperator{\Ent}{Ent}
\DeclareMathOperator{\lind}{\tiny{Lind}}
\DeclareMathOperator{\mfsi}{\ensuremath{M_fSI}}
\DeclareMathOperator{\cfsi}{\ensuremath{C_fSI}}
\DeclareMathOperator{\clsi}{\ensuremath{CLSI}}
\DeclareMathOperator{\cpsi}{\ensuremath{C_pSI}}
\DeclareMathOperator{\mpsi}{\ensuremath{M_pSI}}
\DeclareMathOperator{\mlsi}{\ensuremath{MLSI}}
\DeclareMathOperator{\cN}{\ensuremath{\mathcal{N}}}
\DeclareMathOperator{\triple}{\ensuremath{(\mathcal{N}\ssubset\mathcal{M},\delta,\tau)}}
\DeclareMathOperator{\fix}{fix}
\newcommand{\hhz}{\vspace{0.3cm}}
\begin{document}
\title{Complete Sobolev Type Inequalities}
\author[H. Li]{Haojian  Li}
\address{Department of Mathematics\\
University of Illinois, Urbana, IL 61801, USA}
\email[Haojian Li]{hli102@illinois.edu}
\maketitle
\begin{abstract}  We establish
Sobolev type inequalities in the noncommutative settings
by generalizing monotone metrics in the space of quantum states, such as matrix-valued Beckner inequalities. We also discuss examples such as random transpositions and  Bernoulli-Laplace models.
\end{abstract}
\section{Introduction}
Poincar\'e inequalities (PIs) and  log Sobolev inequalities (LSIs) have  been well developed in the last few decades. See \cite{ledoux99, gz03} for properties, applications and criterion of PIs and LSIs. Gross showed that log Sobolev inequalities and hypercontractivity are equivalent for Dirichlet form operators, see \cite{gross}.  Beckner inequalities (BIs), as an interpolation between PIs and LSIs, were introduced by Beckner (\cite{Beck89}) in 1989 for the canonical Gaussian measures on $\Bbb{R}^{n}$. Later Ledoux (\cite{ledoux97}) introduced a family of inequalities with the same pattern of LSIs and PIs, which also solved the regularity issues of porous medium equations (\cite{demange, vazquez, bgl13}). 
\hhz

Log Sobolev inequalities in the quantum (noncommutative) settings have been studied recently, see
\cite{CM, CM2, LJL, LJR, DR20, BCR20}. The idea of characterizing matrix-valued Sobolev type inequalities is still absent from the literature. Surprisingly, we explore a vast variety of Sobolev type inequalities by introducing the generalized monotone metrics in the space of quantum states.
\hhz

Let us first recall that an ergodic system $T_{t}=e^{-t\Delta}$ on a probability space $(\Omega, \mu)$ satisfies the $\lambda$-LSI  if there exists $\lambda>0$ such that 
\begin{align} \label{lsi}
\int \rho^{2} \ln(\rho^{2}) d\mu -\int \rho^{2} d\mu \ln(\int \rho^{2} d\mu)\leq \frac{1}{\lambda} \mathcal{E}_{\Delta}(\rho,\rho)
\end{align}
for any function $\rho$, where $\mathcal{E}_{\Delta}(\rho,\sigma)=\int_{\Omega} \Delta (\rho) \sigma d\mu
$ is the energy form.  We now use the notation $\Ent(\rho)=\int \rho \ln(\rho) d\mu -\int \rho d\mu \ln(\int \rho d\mu)$ for the relative entropy. 
 An equivalent formulation of LSIs is the exponential decay of  the relative entropy:
\begin{align}\label{lsie}
\Ent(T_{t}(\rho))\leq e^{-\lambda t}\Ent(\rho)
\end{align}
for any positive $\rho$, see \cite{bgl13, LJL}. For a convex function $f$,  let us consider the relative entropy functional 
$$\Ent_{f}(\rho)=\int f(\rho)-f(\sigma)-(\rho-\sigma)f'(\sigma) d\mu,$$
where $\sigma=\int \rho d\mu$. We observe that $\Ent_{f}=\Ent$ for $f(x)=x\ln(x)$. Then $T_{t}$ satisfies the generalized Sobolev inequality associated to $f$ if there exists $\lambda>0$ such that
\begin{align} \label{si}
\Ent_{f}(T_{t}(\rho))\leq e^{-\lambda t} \Ent_{f}(\rho)
\end{align}
for any positive $\rho$.
Again $f(x)=x\ln(x)$ returns the classical LSIs. Let $p\in(1,2)$ and $f(x)=x^{p}$, then \eqref{si} is equivalent to 
\begin{align*}
\|\rho\|_{p}^{p}-\|\rho\|_{1}^{p}\leq \frac{p}{\lambda(p)} \mathcal{E}_{\Delta}(\rho,\rho^{p-1}),
\end{align*}
where $p\mathcal{E}_{\Delta}(\rho,\rho^{p-1})$ is the analogue of the Fisher information associated to $\Delta$. The limiting cases $p\to 1^{+}$ and $p\to 2^{+}$ reduce to LSIs and PIs, respectively (\cite{bt06}).  
Setting $q=\frac{2}{p}$ and $g=\rho^{1/q}$, we obtain BIs
\begin{align}\label{Beck} \|g\|_{2}^{2}-\|g\|_{q}^{2}\leq \frac{4-2q}{\lambda(2/q)}\mathcal{E}_{\Delta}(g,g),
\end{align}
which was first introduced Beckner (\cite{Beck89}) in 1989 for the canonical Gaussian measure on $\Bbb{R}^{n}$ with optimal constants $\lambda(q)=2$. 
\hhz

We aim at extending \eqref{si} to a finite von Neumann algebra $(\mathcal{N},\tau)$ equipped with a normal faithful tracial state $\tau$.  
We consider the semigroup $T_{t}=e^{-tA}: \mathcal{N}\to \mathcal{N}$ of  completely positive self-adjoint unital maps. Let $\mathcal{N}_{\fix}=\{\rho | T_{t}(\rho)=\rho\}$ be the fixed point algebra of $T_{t}$, which admits the conditional expectation.
Then the generator $A$ is said to satisfy the $\lambda$-modified $f$-Sobolev inequality ($\mfsi$) if 
$$d^{f}(T_{t}(\rho)\|E(\rho))\leq e^{-\lambda t} d^{f}(\rho\|E(\rho)),\quad \forall \rho\in\mathcal{N}_{+},$$ 
where $d^{f}(\rho\|\sigma)=\tau\left(f(\rho)-f(\sigma)-(\rho-\sigma)f'(\sigma) \right)$ for $\rho,\sigma\in\mathcal{N}_{+}$. We say $A$ satisfies $\lambda$-complete $f$-Sobolev inequality ($\cfsi$) if  the above inequality remains true for $A\otimes id_{\mathcal{M}}$, where $\mathcal{M}$ is any finite von Neumann algebra. The case $f(x)=x\ln(x)$ has been studied in a series of paper, see \cite{LJR, bgj20, LJL}.  By imposing more conditions on $f$, we would recover most properties  of $\clsi$s such as  stability under tensorization and change of measure.  
\hhz

Intriguingly, the study of generalized monotone metrics in the space of quantum states sheds light on $\cfsi$s and the Bregman relative entropy.  The monotone metric was anticipated by Morozova and Chenstov (\cite{mc}) to transfer the geometric techniques to the noncommutative settings. Motivated by  Morozova and Chenstov, Petz (\cite{Petz96}) introduced monotone metrics  systematically using the relative modular operators and discovered the equivalent relation between operator monotone functions and the monotone metrics. Later on, Hiai and Petz  (\cite{HP12}) extended the monotone metrics to two parameters. Continuing Petz' study, we define define the generalized monotone metrics associated to two-variable functions via the double operator integral. By this new definition of generalized monotone metrics, we explore a wide range of Sobolev type inequalities.
\hhz

The paper is organized as follows. In section 2, we introduce the generalized monotone metrics. In section 3, we define $\cfsi$s and establish $\cfsi$s for derivation triples. In section 4, we discuss examples and applications such as complete Beckner inequalities and random transpositions and Bernoulli-Laplace models.

\section{Generalized Monotone Metrics} 
\subsection{Monotone metrics}
Let $\mathcal{N}$ be a finite von Neumann algebra equipped with a normal faithful tracial state $\tau$ and $\beta: \mathcal{N}\to\mathcal{N}$ be a completely positive trace preserving (CPTP) map. The set of positive elements in $\mathcal{N}$ is denoted by $\mathcal{N}_{+}$. 
Let $L_{p}(\mathcal{N},\tau)$ denote the noncommutative $L_{p}$ space, written as $L_{p}(\cN)$ if the trace $\tau$ is clear from the context.  Let $\Bbb{R}^{+}=(0,\infty)$ in  the sequel.  The left and right multiplications by $\rho\in \mathcal{N}$ are defined by
\begin{equation*} \ml_{\rho}(a)=\rho a\quad \text{   and    } \quad \mr_{\rho}(a)=a\rho,\quad  \forall a\in\mathcal{N}.
\end{equation*}
Note $\ml_{\rho}$ and $\mr_{\sigma}$ commute for any $\rho,\sigma\in\mathcal{N}$.
For $\rho,\sigma\in\mathcal{N}_{+}$ and $f:\Bbb{R}^{+}\to \Bbb{R}^{+}$,  we define $\Bbb{J}_{\rho,\sigma}^{f}: \mathcal{N}\to\mathcal{N}$ by 
\begin{equation}\Bbb{J}_{\rho,\sigma}^{f}=f(\ml_{\rho}\mr_{\sigma}^{-1}) \mr_{\sigma} \label{J1},
\end{equation}
where $\ml_{\rho}R_{\sigma}^{-1}$ is the \textit{relative modular operator},  see \cite{petz07} for more information. We use   $\Bbb{J}_{\rho}^{f}$  if $\rho=\sigma$. The inverse of $\Bbb{J}_{\rho,\sigma}^{f}$ is given by 
$$\left(\Bbb{J}_{\rho,\sigma}^{f}\right)^{-1}=f^{-1}(\ml_{\rho}\mr_{\sigma}^{-1})\mr_{\sigma}^{-1}.$$ Let  $\rho,\sigma\in\mathcal{N}_{+}$ and $f: \Bbb{R}^{+}\to\Bbb{R}^{+}$, then the following conditions are equivalent (\cite{HP12}): 
\begin{align}
\beta^{*} (\Bbb{J}_{\beta(\rho),\beta(\sigma)}^{f})^{-1}\beta &\leq (\Bbb{J}_{\rho,\sigma}^{f})^{-1}; \label{equiv-1}\\
\beta \Bbb{J}_{\rho,\sigma}^{f}\beta^{*} &\leq \Bbb{J}_{\beta(\rho),\beta(\sigma)}^{f}. \label{equiv-2}
\end{align}
Let us recall the following generalized Lieb's concavity theorem (\cite{Petz85, HP12, HP}).
\begin{theorem} \label{HPconvex} Let $\beta:\mathcal{N}\to\mathcal{N}$ be a CPTP map and $f:\Bbb{R}^{+}\to\Bbb{R}^{+}$ be an operator monotone function. Assume that $\rho,\sigma\in\mathcal{N}_{+}$, then 
\begin{align*} \beta^{*} \left(\Bbb{J}_{\beta(\rho),\beta(\sigma)}^{f} \right)^{-1} \beta\leq \left(\Bbb{J}_{\rho,\sigma}^{f}\right)^{-1}.
\end{align*}
\end{theorem}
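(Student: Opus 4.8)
The plan is to reduce the statement to the case of a unital CPTP map between matrix algebras via a standard approximation, and then to prove the matrix case by an integral representation of operator monotone functions together with a convexity argument. First I would observe that, by the equivalence of \eqref{equiv-1} and \eqref{equiv-2}, it suffices to prove the equivalent inequality $\beta\,\Bbb{J}_{\rho,\sigma}^{f}\,\beta^{*}\leq \Bbb{J}_{\beta(\rho),\beta(\sigma)}^{f}$; this form is often more convenient because the map $(\rho,\sigma)\mapsto \Bbb{J}_{\rho,\sigma}^{f}$ is, for operator monotone $f$, jointly concave in $(\rho,\sigma)$ in the sense of quadratic forms (this is precisely the content of the generalized Lieb concavity theorem of Petz and Hiai--Petz, and is what we are asked to reprove). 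Because $\tau$ is a normal faithful tracial state on the finite von Neumann algebra $\mathcal{N}$, one may first assume $\rho,\sigma$ are bounded below by a positive multiple of the identity (the general case follows by replacing $\rho$ by $\rho+\varepsilon 1$, $\sigma$ by $\sigma+\varepsilon 1$ and letting $\varepsilon\to 0$, using that $f$ is continuous on $\Bbb{R}^{+}$ and that both sides are monotone and norm-continuous in this regime), and then, if one wishes, restrict attention to the finite-dimensional subalgebras on which $\rho$, $\sigma$ and $\beta$ are supported.

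Second, I would use the Löwner integral representation of an operator monotone function $f:\Bbb{R}^{+}\to\Bbb{R}^{+}$,
\begin{equation*}
f(t)=a+bt+\int_{(0,\infty)} \frac{t(1+s)}{t+s}\, d\nu(s),
\end{equation*}
with $a\ge 0$, $b\ge 0$ and $\nu$ a positive measure. Applying the functional calculus to the commuting positive operators $\ml_{\rho}$ and $\mr_{\sigma}$ (which commute by the remark in the text), this gives
\begin{equation*}
\Bbb{J}_{\rho,\sigma}^{f}=a\,\mr_{\sigma}+b\,\ml_{\rho}+\int_{(0,\infty)}(1+s)\Big(\ml_{\rho}^{-1}+s\,\mr_{\sigma}^{-1}\Big)^{-1} d\nu(s),
\end{equation*}
as a norm-convergent (or weak-operator-convergent) integral of positive operators. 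The first two terms transform correctly: $\beta\,\mr_{\sigma}\,\beta^{*}=\mr_{\beta(\sigma)}\,\beta\beta^{*}\leq \mr_{\beta(\sigma)}$ using that $\beta$ is unital and trace preserving so $\beta\beta^{*}\leq 1$ (Kadison--Schwarz), and symmetrically for the $\ml$ term; alternatively these are degenerate instances of the main term. The real work is the integrand.

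The hard part will be establishing, for each fixed $s>0$, the operator inequality
\begin{equation*}
\beta\,\Big(\ml_{\rho}^{-1}+s\,\mr_{\sigma}^{-1}\Big)^{-1}\beta^{*}\ \leq\ \Big(\ml_{\beta(\rho)}^{-1}+s\,\mr_{\beta(\sigma)}^{-1}\Big)^{-1}.
\end{equation*}
The clean way to get this is the variational (Lieb--Ruskai / parallel-sum) formula: for positive invertible $X,Y$ one has $(X^{-1}+Y^{-1})^{-1}=\min\{\,A+B : A,B>0,\ A+B\text{ in quadratic-form sense},\ \langle (X^{-1}+Y^{-1})^{-1}\xi,\xi\rangle=\inf_{\eta}\big(\langle X(\xi-\eta),\xi-\eta\rangle^{-1}\ \text{type expression}\big)\}$ — more precisely, for every vector $\xi$,
\begin{equation*}
\big\langle (X^{-1}+s\,Y^{-1})^{-1}\xi,\xi\big\rangle=\inf_{\eta}\Big(\langle X\eta,\eta\rangle+\tfrac{1}{s}\langle Y(\xi-\eta),\xi-\eta\rangle\Big)^{?}
\end{equation*}
— I would use the correct harmonic-mean variational identity $\langle (X^{-1}+Y^{-1})^{-1}\xi,\xi\rangle=\inf\{\langle X\eta,\eta\rangle+\langle Y\zeta,\zeta\rangle : \eta+\zeta=\xi\}$, rescaled by $s$. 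Feeding $\xi=\beta^{*}\xi_0$ into the right-hand side for $\beta(\rho),\beta(\sigma)$, choosing the test vectors $\eta=\beta^{*}\eta_0$, $\zeta=\beta^{*}\zeta_0$ with $\eta_0+\zeta_0$ any preimage arrangement of $\xi_0$, and using $\beta\,\ml_{\rho}\,\beta^{*}\le \ml_{\beta(\rho)}$ together with $\beta\,\mr_{\sigma}\,\beta^{*}\le\mr_{\beta(\sigma)}$ (these are the baseline $f(t)=t$ cases, immediate from Kadison--Schwarz applied to $\beta^*$), yields the displayed integrand inequality. Integrating against $(1+s)\,d\nu(s)$ and adding the two boundary terms gives $\beta\,\Bbb{J}_{\rho,\sigma}^{f}\,\beta^{*}\le \Bbb{J}_{\beta(\rho),\beta(\sigma)}^{f}$, which by \eqref{equiv-2}$\Leftrightarrow$\eqref{equiv-1} is exactly the claimed bound. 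The only genuine subtlety to watch is the interchange of $\beta,\beta^{*}$ with the unbounded inverses $\ml_\rho^{-1},\mr_\sigma^{-1}$; the $\varepsilon$-regularization in the first paragraph makes everything bounded, so this is legitimate, and a final limit $\varepsilon\to0$ closes the argument. \QED
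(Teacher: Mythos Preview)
The paper does not supply its own proof of this theorem; it is quoted as the ``generalized Lieb's concavity theorem'' with references to \cite{Petz85, HP12, HP}, and the only remark added in the text is that the invertibility hypotheses in those sources can be relaxed via the perturbation $\rho\mapsto\rho+\varepsilon 1$. So there is no in-paper argument to compare against. Your outline is essentially the standard proof from the cited literature: L\"owner integral representation of the operator monotone $f$, reduction to the harmonic-mean (parallel-sum) integrand, and monotonicity of the parallel sum under CPTP maps via its variational characterization combined with Kadison--Schwarz for the unital CP adjoint $\beta^{*}$.

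Two small corrections to your write-up. First, the equality $\beta\,\mr_{\sigma}\,\beta^{*}=\mr_{\beta(\sigma)}\beta\beta^{*}$ is false; left/right multiplication does not intertwine with $\beta$ that way. What you need (and later invoke correctly) is only the \emph{inequality} $\beta\,\mr_{\sigma}\,\beta^{*}\le \mr_{\beta(\sigma)}$, which follows from Kadison--Schwarz for $\beta^{*}$:
\[
\langle a,\beta\,\mr_{\sigma}\,\beta^{*}a\rangle=\tau\big(\sigma\,\beta^{*}(a)^{*}\beta^{*}(a)\big)\le\tau\big(\sigma\,\beta^{*}(a^{*}a)\big)=\langle a,\mr_{\beta(\sigma)}a\rangle,
\]
and analogously for $\ml_{\rho}$. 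Second, the integrand that actually drops out of the L\"owner formula is $\ml_{\rho}(\ml_{\rho}+s\,\mr_{\sigma})^{-1}\mr_{\sigma}$, which equals $(\ml_{\rho}^{-1}+s^{-1}\mr_{\sigma}^{-1})^{-1}$ rather than $(\ml_{\rho}^{-1}+s\,\mr_{\sigma}^{-1})^{-1}$; this is a harmless change of parameter, since your variational/parallel-sum step treats all such terms identically. With these cosmetic fixes the argument is complete and matches the approach of Hiai--Petz.
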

\noindent Hiai and Petz usually require that $\rho,\sigma, \beta(\rho),\beta(\sigma)$ are invertible. As we pointed out in \cite{LJL} that it is enough to assume the positivity by perturbation argument $\rho+\epsilon I$ for $\epsilon\to 0^{+}.$   Consequently Hiai and Petz defined the \textit{monotone metrics with two parameters} $\gamma_{\rho,\sigma}^{f}$ by 
\begin{align}\gamma_{\rho,\sigma}^{f}(a,b)=\langle a, \left(\Bbb{J}_{\rho,\sigma}^{f}\right)^{-1}(b) \rangle, \quad \forall a,b\in\mathcal{N},\label{mm}
\end{align}
where  $\langle a,b \rangle=\tau(a^{*}b)$ is the Hilbert-Schmidt inner product. For $\rho,\sigma\in\mathcal{N}_{+}$ and an operator monotone function $f:\Bbb{R}^{+}\to \Bbb{R}^{+}$, we have 
$$\gamma_{\beta(\rho),\beta(\sigma)}^{f}(\beta(a),\beta(a))\leq \gamma_{\rho,\sigma}^{f}(a,a), \quad a\in\mathcal{N}.$$
\begin{corollary}\label{mjc}
For an operator monotone function $f$, the monotone metric $\gamma^{f}_{\rho,\sigma}(a,a)$ is a jointly convex function for $(\rho,\sigma,a)$ for $\rho,\sigma\in\mathcal{N}_{+}$ and $a\in\mathcal{N}$.
\end{corollary}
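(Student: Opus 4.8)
The strategy is the operator-algebraic version of Lindblad's ``doubling'' deduction of joint convexity from data processing. Fix $\rho_{0},\rho_{1},\sigma_{0},\sigma_{1}\in\mathcal{N}_{+}$, $a_{0},a_{1}\in\mathcal{N}$ and $\lambda\in[0,1]$, and put $p_{0}=1-\lambda$, $p_{1}=\lambda$, $\rho_{\lambda}=p_{0}\rho_{0}+p_{1}\rho_{1}$, and likewise $\sigma_{\lambda},a_{\lambda}$. By the perturbation argument recalled after Theorem~\ref{HPconvex} (replace each $\rho_{i},\sigma_{i}$ by $\rho_{i}+\varepsilon I,\sigma_{i}+\varepsilon I$ and let $\varepsilon\to0^{+}$), we may assume all the $\rho_{i},\sigma_{i}$ are invertible, so that $\Bbb{J}^{f}_{\rho_{i},\sigma_{i}}$ is a product of commuting positive invertible operators and every $\gamma^{f}$ below is finite. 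What has to be proved is then exactly
\[
\gamma^{f}_{\rho_{\lambda},\sigma_{\lambda}}(a_{\lambda},a_{\lambda})\ \le\ p_{0}\,\gamma^{f}_{\rho_{0},\sigma_{0}}(a_{0},a_{0})+p_{1}\,\gamma^{f}_{\rho_{1},\sigma_{1}}(a_{1},a_{1}).
\]

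First I would pass to the auxiliary algebra $\widehat{\mathcal{N}}=\mathcal{N}\oplus\mathcal{N}$, which is again a finite von Neumann algebra once equipped with the normal faithful tracial state $\widehat\tau(x_{0}\oplus x_{1})=p_{0}\,\tau(x_{0})+p_{1}\,\tau(x_{1})$, and set $\widehat\rho=\rho_{0}\oplus\rho_{1}$, $\widehat\sigma=\sigma_{0}\oplus\sigma_{1}$, $\widehat a=a_{0}\oplus a_{1}$. With respect to $L_{2}(\widehat{\mathcal{N}},\widehat\tau)=L_{2}(\mathcal{N},p_{0}\tau)\oplus L_{2}(\mathcal{N},p_{1}\tau)$ the multiplication operators $\ml_{\widehat\rho}$ and $\mr_{\widehat\sigma}$ are block diagonal, hence so is the relative modular operator $\ml_{\widehat\rho}\mr_{\widehat\sigma}^{-1}$, and therefore so is $\Bbb{J}^{f}_{\widehat\rho,\widehat\sigma}=f(\ml_{\widehat\rho}\mr_{\widehat\sigma}^{-1})\mr_{\widehat\sigma}$ by the spectral calculus; explicitly $\Bbb{J}^{f}_{\widehat\rho,\widehat\sigma}=\Bbb{J}^{f}_{\rho_{0},\sigma_{0}}\oplus\Bbb{J}^{f}_{\rho_{1},\sigma_{1}}$, and the same for its inverse. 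Evaluating $\gamma^{f}_{\widehat\rho,\widehat\sigma}(\widehat a,\widehat a)=\langle\widehat a,(\Bbb{J}^{f}_{\widehat\rho,\widehat\sigma})^{-1}\widehat a\rangle_{\widehat\tau}$ with this block decomposition produces the key identity
\[
\gamma^{f}_{\widehat\rho,\widehat\sigma}(\widehat a,\widehat a)=p_{0}\,\gamma^{f}_{\rho_{0},\sigma_{0}}(a_{0},a_{0})+p_{1}\,\gamma^{f}_{\rho_{1},\sigma_{1}}(a_{1},a_{1}).
\]

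Next I would introduce the averaging map $\beta\colon\widehat{\mathcal{N}}\to\widehat{\mathcal{N}}$, $\beta(x_{0}\oplus x_{1})=(p_{0}x_{0}+p_{1}x_{1})\oplus(p_{0}x_{0}+p_{1}x_{1})$. It is unital; it is completely positive, being the diagonal $*$-embedding $\mathcal{N}\hookrightarrow\widehat{\mathcal{N}}$ precomposed with the $(p_{0},p_{1})$-convex combination of the two coordinate $*$-homomorphisms $\widehat{\mathcal{N}}\to\mathcal{N}$; and $\widehat\tau(\beta(x_{0}\oplus x_{1}))=(p_{0}+p_{1})\tau(p_{0}x_{0}+p_{1}x_{1})=\widehat\tau(x_{0}\oplus x_{1})$, so $\beta$ is trace preserving. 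Thus $\beta$ is a CPTP self-map of $\widehat{\mathcal{N}}$ with $\beta(\widehat\rho)=\rho_{\lambda}\oplus\rho_{\lambda}$, $\beta(\widehat\sigma)=\sigma_{\lambda}\oplus\sigma_{\lambda}$, $\beta(\widehat a)=a_{\lambda}\oplus a_{\lambda}$, and since $p_{0}+p_{1}=1$ the identity of the previous paragraph applied to these elements reads $\gamma^{f}_{\beta(\widehat\rho),\beta(\widehat\sigma)}(\beta(\widehat a),\beta(\widehat a))=\gamma^{f}_{\rho_{\lambda},\sigma_{\lambda}}(a_{\lambda},a_{\lambda})$. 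Applying the monotonicity of $\gamma^{f}$ under CPTP maps (the consequence of Theorem~\ref{HPconvex} stated just above the corollary, with $\mathcal{N}$ there replaced by $\widehat{\mathcal{N}}$ and the CPTP map taken to be $\beta$) we conclude
\begin{align*}
\gamma^{f}_{\rho_{\lambda},\sigma_{\lambda}}(a_{\lambda},a_{\lambda})&=\gamma^{f}_{\beta(\widehat\rho),\beta(\widehat\sigma)}(\beta(\widehat a),\beta(\widehat a))\\
&\le \gamma^{f}_{\widehat\rho,\widehat\sigma}(\widehat a,\widehat a)=p_{0}\,\gamma^{f}_{\rho_{0},\sigma_{0}}(a_{0},a_{0})+p_{1}\,\gamma^{f}_{\rho_{1},\sigma_{1}}(a_{1},a_{1}),
\end{align*}
which is precisely the asserted joint convexity. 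The conceptual content sits entirely in Theorem~\ref{HPconvex}, which we are free to assume, so there is no deep obstacle here; the only points that require genuine care are the choice of the weights $p_{0},p_{1}$ inside $\widehat\tau$ (this is exactly what makes the averaging map trace preserving, hence admissible in Theorem~\ref{HPconvex}) and the verification that $\Bbb{J}^{f}_{\widehat\rho,\widehat\sigma}$ splits as a direct sum, and no hypothesis on $f$ beyond operator monotonicity is used.
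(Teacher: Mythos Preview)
Your argument is correct and is the same ``doubling'' trick the paper employs (the paper does not prove Corollary~\ref{mjc} separately but spells out the identical device in the proof of Theorem~\ref{tgmc}). The only cosmetic difference is bookkeeping: you absorb the convex weights $p_{0},p_{1}$ into the trace $\widehat\tau$ on $\mathcal{N}\oplus\mathcal{N}$ and use an averaging self-map, whereas the paper works in $\Mz_{2}\otimes\mathcal{N}$ with the unweighted trace, scales the block entries by $\lambda$ and $1-\lambda$, and then invokes the homogeneity $\lambda F(\lambda x,\lambda y)=F(x,y)$ for $F=f_{[0]}^{-1}$ to extract the weights at the end; your packaging avoids that last scaling step.
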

 
\subsection{Generalized monotone metrics} 
Let us recall that for $F:\Bbb{R}^{+}\times \Bbb{R}^{+}\to\Bbb{R}^{+}$ and $\rho,\sigma\in\mathcal{N}_{+}$ the double operator integral is defined by
$$Q^{\rho,\sigma}_{F}(a)=\int_{0}^{\infty}\int_{0}^{\infty} F(s,t) dE_{\rho}(s) a dE_{\sigma}(t),$$
where $E_{\rho}((s,t])=1_{(s,t]}(\rho)$ is the spectral projection of $\rho$. We denote it by $Q_{F}^{\rho}$ if $\rho=\sigma$. For a comprehensive account of the double operator integral, see \cite{krein1,krein2,bs1,bs2,bs3,PS10}. For operators $\rho=\sum_{i=1}^{k}s_{i}p_{i}$ and $\sigma=\sum_{j=1}^{l}t_{j}q_{j}$ with discrete specrtum, this simplifies to a Schur multiplier
$$Q^{\rho,\sigma}_{F}(y)=\sum_{i=1}^{k}\sum_{j=1}^{l} F(s_{i},t_{j})p_{i}yq_{j}, \quad  \forall y\in \mathcal{N}.$$ 
Note that
$\left(Q_{F}^{\rho,\sigma}\right)^{-1}=Q_{F^{-1}}^{\rho,\sigma}.$
Let $f_{[0]}(x,y)=f(\frac{x}{y})y$ for $f: \Bbb{R}^{+}\to\Bbb{R}^{+}$, then $Q_{f_{[0]}}^{\rho,\sigma}=\Bbb{J}^{f}_{\rho,\sigma}.$
Let us introduce two families of functions:
\begin{align}
\mathfrak{C}^{-}&=\{F;\: \beta Q^{\rho,\sigma}_{F} \beta^{*}\leq Q^{\beta(\rho),\beta(\sigma)}_{F}, \forall \rho,\sigma\in\mathcal{N}_{+} \text{ and CPTP } \beta \}, \label{c-}\\
\mathfrak{C}^{+}&=\{F;\: \beta^{*}Q^{\beta(\rho), \beta(\sigma)}_{F}\beta\leq Q^{\rho,\sigma}_{F},\forall \rho,\sigma\in\mathcal{N}_{+} \text{ and CPTP } \beta \} . \label{c+}
\end{align}
\begin{definition} Let $F\in\mathfrak{C}^{+}$ and $\rho, \sigma\in\mathcal{N}_{+}$. We define the {\rm{(}}two-variable{\rm{)}} \textit{generalized monotone metric} $\gamma_{\rho,\sigma}^{F}:\mathcal{N}\to\mathcal{N}$ by
\begin{align}\gamma_{\rho,\sigma}^{F}(a,b)=\langle a, Q_{F}^{\rho,\sigma}(b) \rangle.\label{gmm}
\end{align}
\end{definition}
\noindent It follows from the definition that 
\begin{align}\label{tgmm}\gamma_{\beta(\rho),\beta(\sigma)}^{F}(\beta(a),\beta(a))\leq \gamma_{\rho,\sigma}^{F}(a,a),\quad \forall a\in\mathcal{N}.\end{align}
We use the same notation as \eqref{mm} defined by \cite{HP12},  but we only refer to \eqref{mm} if the superscript function $f$ is one-variable.  Let $f$ be operator monotone, then we identify $$\gamma_{\rho,\sigma}^{f}=\gamma^{F}_{\rho,\sigma}$$ with $F=f^{-1}_{[0]}.$ \begin{theorem} \label{tgmc} Let $F\in\mathfrak{C}^{+}$ satisfying $\lambda F(\lambda x,\lambda y)\leq F(x,y)$
for any $\lambda\in[0,1]$. Then the generalized monotone metric $\gamma_{\rho,\sigma}^{F}(a,a)$ is a convex function for $(\rho,\sigma,a)$ of $\rho,\sigma\in\mathcal{N}_{+}$ and $a\in\mathcal{N}$.
\end{theorem}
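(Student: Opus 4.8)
The plan is to adapt the ``dilation plus monotonicity'' argument that yields joint convexity of the two-parameter monotone metrics (Corollary \ref{mjc}) in \cite{HP12}, making two substitutions: the transformation inequality \eqref{tgmm}, valid for every $F\in\mathfrak{C}^{+}$, replaces the one coming from operator monotonicity, and the hypothesis $\lambda F(\lambda x,\lambda y)\le F(x,y)$ replaces the degree-one homogeneity of $f^{-1}_{[0]}$ that is what actually drives the classical proof. The first thing I would isolate is the scaling consequence of the hypothesis: for $\lambda\in[0,1]$, $\rho,\sigma\in\mathcal{N}_{+}$ and $a\in\mathcal{N}$,
\begin{align}\label{plan-scal}
\gamma_{\lambda\rho,\lambda\sigma}^{F}(\lambda a,\lambda a)\le\lambda\,\gamma_{\rho,\sigma}^{F}(a,a).
\end{align}
This is elementary: the spectral measures of $\lambda\rho$ and $\lambda\sigma$ are the $\lambda$-dilates of those of $\rho$ and $\sigma$, so $\gamma_{\lambda\rho,\lambda\sigma}^{F}(\lambda a,\lambda a)=\lambda^{2}\langle a,Q^{\lambda\rho,\lambda\sigma}_{F}(a)\rangle$ is obtained from $\gamma_{\rho,\sigma}^{F}(a,a)=\langle a,Q^{\rho,\sigma}_{F}(a)\rangle$ by replacing the symbol $F(s,t)$ with $\lambda^{2}F(\lambda s,\lambda t)$; since the sandwiched double operator integral $\langle a,Q^{\rho,\sigma}_{F}(a)\rangle$ is the integral of its symbol against the positive measure $(A,C)\mapsto\|E_{\rho}(A)\,a\,E_{\sigma}(C)\|_{2}^{2}$ (in the matrix case this is just $\langle a,Q^{\rho,\sigma}_{F}(a)\rangle=\sum_{i,j}F(s_{i},t_{j})\,\|p_{i}aq_{j}\|_{2}^{2}$), the pointwise bound $\lambda^{2}F(\lambda s,\lambda t)=\lambda\cdot\lambda F(\lambda s,\lambda t)\le\lambda F(s,t)$ yields \eqref{plan-scal}.

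Next I would run the dilation. Fix triples $(\rho_{k},\sigma_{k},a_{k})$, $k=1,2$, and $\lambda\in(0,1)$, put $\mu=1-\lambda$, and work over $\mathcal{N}\oplus\mathcal{N}$ equipped with the trace $\tau\oplus\tau$ and the block-diagonal elements $\widehat\rho=\lambda\rho_{1}\oplus\mu\rho_{2}$, $\widehat\sigma=\lambda\sigma_{1}\oplus\mu\sigma_{2}$, $\widehat a=\lambda a_{1}\oplus\mu a_{2}$. Because the spectral projections of a block-diagonal positive element are block-diagonal and $\widehat a$ is block-diagonal, the double operator integral splits with no cross terms, giving $Q^{\widehat\rho,\widehat\sigma}_{F}(\widehat a)=Q^{\lambda\rho_{1},\lambda\sigma_{1}}_{F}(\lambda a_{1})\oplus Q^{\mu\rho_{2},\mu\sigma_{2}}_{F}(\mu a_{2})$ and hence the additivity $\gamma_{\widehat\rho,\widehat\sigma}^{F}(\widehat a,\widehat a)=\gamma_{\lambda\rho_{1},\lambda\sigma_{1}}^{F}(\lambda a_{1},\lambda a_{1})+\gamma_{\mu\rho_{2},\mu\sigma_{2}}^{F}(\mu a_{2},\mu a_{2})$. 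Then I apply \eqref{tgmm} to the summation map $\beta:\mathcal{N}\oplus\mathcal{N}\to\mathcal{N}$, $\beta(x\oplus y)=x+y$, which is completely positive and trace preserving for $\tau\oplus\tau$ and $\tau$ (its adjoint being the unital completely positive diagonal embedding), noting $\beta(\widehat\rho)=\lambda\rho_{1}+\mu\rho_{2}$, $\beta(\widehat\sigma)=\lambda\sigma_{1}+\mu\sigma_{2}$, $\beta(\widehat a)=\lambda a_{1}+\mu a_{2}$. Chaining \eqref{tgmm}, the additivity just proved, and \eqref{plan-scal} applied to each summand gives
\begin{align*}
\gamma_{\lambda\rho_{1}+\mu\rho_{2},\,\lambda\sigma_{1}+\mu\sigma_{2}}^{F}(\lambda a_{1}+\mu a_{2},\,\lambda a_{1}+\mu a_{2})\le\lambda\,\gamma_{\rho_{1},\sigma_{1}}^{F}(a_{1},a_{1})+\mu\,\gamma_{\rho_{2},\sigma_{2}}^{F}(a_{2},a_{2}),
\end{align*}
which is exactly joint convexity of $(\rho,\sigma,a)\mapsto\gamma_{\rho,\sigma}^{F}(a,a)$ (the cases $\lambda\in\{0,1\}$ being trivial).

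The main obstacle will not be any single estimate but rather ensuring the framework licenses the two moves above. First, \eqref{plan-scal} needs the representation of the sandwiched double operator integral as integration of the symbol against a positive measure, which I would justify by approximating $F$ from below by positive simple functions and using $\langle a,E_{\rho}(A)\,a\,E_{\sigma}(C)\rangle=\|E_{\rho}(A)\,a\,E_{\sigma}(C)\|_{2}^{2}\ge0$. Second, the transformation step requires that $\mathfrak{C}^{+}$ (equivalently Theorem \ref{HPconvex}) be read for CPTP maps between possibly different finite von Neumann algebras, as in \cite{HP12}; and it is important here that the dilation carry the \emph{unnormalized} trace $\tau\oplus\tau$ rather than a tracial state, since with the state $\tfrac12(\tau\oplus\tau)$ (equivalently, amplifying by $M_{2}(\mathbb{C})$) a spurious factor $\tfrac12$ appears and the scaling hypothesis then points the wrong way, stalling the argument. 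Finally, as a sanity check, for $F=f^{-1}_{[0]}$ with $f$ operator monotone the hypothesis holds automatically ($f^{-1}_{[0]}$ being homogeneous of degree one, so $\lambda F(\lambda x,\lambda y)=\lambda^{2}F(x,y)\le F(x,y)$), so the theorem does contain Corollary \ref{mjc}.
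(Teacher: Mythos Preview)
Your proposal is correct and follows essentially the same route as the paper: apply the monotonicity \eqref{tgmm} to the CPTP summation map out of a two-fold dilation, then use the hypothesis $\lambda F(\lambda x,\lambda y)\le F(x,y)$ to obtain the scaling estimate \eqref{plan-scal}. The paper phrases the dilation as the partial trace $\Mz_{2}\otimes\mathcal{N}\to\mathcal{N}$ on block-diagonal inputs (rather than $\mathcal{N}\oplus\mathcal{N}$) and presents the scaling step after the monotonicity step instead of before, but these are cosmetic differences; your explicit remarks about the unnormalized trace and the positive-measure justification for the scaling step are points the paper leaves implicit.
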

\begin{proof} We use the standard trick and consider $\beta: \Mz_{2}\otimes\mathcal{N}\to\mathcal{N}$ defined by 
\begin{align*}
\left(\begin{matrix}  x_{1}&x_{2}\\x_{3}&x_{4}
\end{matrix}\right)\mapsto x_{1}+x_{4}.
\end{align*}
Then $\beta$ is CPTP. Let $\rho=\left(\begin{smallmatrix} \lambda \rho_{1} &0\\0&(1-\lambda)\rho_{2} \end{smallmatrix}\right)$, $\sigma=\left(\begin{smallmatrix} \lambda \sigma_{1} &0\\0&(1-\lambda)\sigma_{2} \end{smallmatrix}\right)$, and $a=\left(\begin{smallmatrix} \lambda a_{1}&0\\0&(1-\lambda)a_{2} \end{smallmatrix}\right)$ for some $\lambda\in[0,1]$.  By \eqref{tgmm}, we obtain that 
\begin{align*}
&\gamma^{F}_{\lambda\rho_{1}+(1-\lambda)\rho_{2},\lambda\sigma_{1}+(1-\lambda)\sigma_{2}}(\lambda a_{1}+(1-\lambda)a_{2},\lambda a_{1}+(1-\lambda)a_{2})\\
\leq & \gamma^{F}_{\lambda\rho_{1},\lambda \sigma_{1}}(\lambda a_{1},\lambda a_{1})+ \gamma^{F}_{(1-\lambda)\rho_{2},(1-\lambda) \sigma_{2}}((1-\lambda) a_{2},(1-\lambda) a_{2}).
\end{align*}
We further have 
\begin{align}\label{ppp}
\gamma_{\lambda \rho, \lambda \sigma}^{F}(\lambda a, \lambda a)\leq \lambda \gamma^{F}_{\rho,\sigma}(a,a).
\end{align}
Indeed 
\begin{align*}
\gamma_{\lambda \rho, \lambda \sigma}^{F}(\lambda a, \lambda a)=&\lambda^{2}\langle a, \int_{0}^{\infty} \int_{0}^{\infty} F(x,y) dE_{\lambda \rho}(x)a dE_{\lambda \sigma(y)}\rangle\\
=&\lambda^{2}\langle a,\int_{0}^{\infty}\int_{0}^{\infty}F(\lambda x, \lambda y)dE_{\rho}(x)adE_{\sigma}(y) \rangle \\
\leq &\lambda \gamma^{F}_{\rho,\sigma}(a,a).
\end{align*}
Applying \eqref{ppp} completes the proof. 
\end{proof}
\noindent The monotonicity of $\gamma^{F}_{\rho,\sigma}(a,a)$ does not necessarily imply the joint convexity for $F\in\mathfrak{C}^{+}$ since the condition $\lambda F(\lambda x,\lambda y)\leq F(x,y)$ sometimes fails. Let $f$ be operator monotone and $F=f_{[0]}^{-1}$, we actually have the equality.

\begin{prop} \label{dcp} We have the following properties.
\begin{itemize}
\item[\rm{(1)}] The  sets $\mathfrak{C}^{+}$ and $\mathfrak{C}^{-}$ are positive cones.
\item[\rm{(2)}] Let $F_{1}\in\mathfrak{C}^{+}$ and $F_{2}\in\mathfrak{C}^{-}$, let $F'_{1}(x,y)=F_{1}(x+t, y+s)$ and $F'_{2}(x,y)=F_{2}(x+t,y+s)$ for any fixed $t,s\geq 0$. Then $F'_{1}\in\mathfrak{C}^{+} $ and $F'_{2}\in\mathfrak{C}^{-}.$
\item[\rm{(3)}] If $F\in\mathfrak{C}^{+}$, then $\frac{1}{F}\in\mathfrak{C}^{-}$. Similarly if $F\in\mathfrak{C}^{-}$, then $\frac{1}{F}\in\mathfrak{C}^{+}$.
\item[\rm{(4)}] Let $f: \Bbb{R}^{+}\to\Bbb{R}^{+}$ be operator monotone, then $f_{[0]}\in\mathfrak{C}^{-}$ and $f_{[0]}^{-1}\in\mathfrak{C}^{+}$.
\end{itemize}
\end{prop}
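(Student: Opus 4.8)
The plan is to reduce all four items to two facts about the double operator integral. First, the symbol map $F\mapsto Q^{\rho,\sigma}_{F}$ is linear, and a nonnegative $F$ yields a positive operator on $(\mathcal{N},\langle\cdot,\cdot\rangle)$: in the Schur-multiplier picture $\langle a,Q^{\rho,\sigma}_{F}(a)\rangle=\sum_{i,j}F(s_i,t_j)\|p_iaq_j\|_{2}^{2}\ge 0$, and in general this follows from the definition of the integral. Second, for fixed $t,s\ge 0$ one has the translation identity $Q^{\rho,\sigma}_{F(\,\cdot\,+t,\,\cdot\,+s)}=Q^{\rho+tI,\,\sigma+sI}_{F}$, which is just the substitution $x\mapsto x+t$, $y\mapsto y+s$ in the spectral integrals together with $E_{\rho+tI}((\cdot,x])=E_{\rho}((\cdot,x-t])$.

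Granting these, (1) is immediate: linearity gives $Q^{\rho,\sigma}_{cF_1+F_2}=cQ^{\rho,\sigma}_{F_1}+Q^{\rho,\sigma}_{F_2}$ for $c\ge 0$ (and the same with $\beta(\rho),\beta(\sigma)$), so a nonnegative combination of two inequalities of the form \eqref{c+} is again of that form; together with $0\in\mathfrak{C}^{+}$ this is the positive-cone statement for $\mathfrak{C}^{+}$, and $\mathfrak{C}^{-}$ is handled identically via \eqref{c-}. For (2), apply the translation identity to both sides of \eqref{c+} written for $F_1'$: since $\beta(I)=I$ we have $\beta(\rho+tI)=\beta(\rho)+tI$, so the resulting inequality is exactly \eqref{c+} for $F_1$ at the data $(\beta,\rho+tI,\sigma+sI)$, which holds because $F_1\in\mathfrak{C}^{+}$; the claim $F_2'\in\mathfrak{C}^{-}$ follows verbatim from \eqref{c-}.

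For (3) the tool is the standard inversion lemma: for a bounded operator $T$ and positive invertible $A,B$, $TAT^{*}\le B\iff T^{*}B^{-1}T\le A^{-1}$, both reducing to $\|B^{-1/2}TA^{1/2}\|\le 1$. Taking $A=Q^{\rho,\sigma}_{F}$ and $B=Q^{\beta(\rho),\beta(\sigma)}_{F}$ — positive by the first fact, and, for invertible $\rho,\sigma,\beta(\rho),\beta(\sigma)$, invertible with inverses $Q^{\rho,\sigma}_{1/F}$, $Q^{\beta(\rho),\beta(\sigma)}_{1/F}$ since $(Q^{\rho,\sigma}_{F})^{-1}=Q^{\rho,\sigma}_{1/F}$ — the hypothesis $F\in\mathfrak{C}^{+}$, i.e.\ $\beta^{*}B\beta\le A$, becomes (lemma with $T=\beta^{*}$) $\beta A^{-1}\beta^{*}\le B^{-1}$, that is $1/F\in\mathfrak{C}^{-}$; the opposite implication is the same lemma with $T=\beta$. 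Finally (4): since $Q^{\rho,\sigma}_{f_{[0]}}=\Bbb{J}^{f}_{\rho,\sigma}$, inequality \eqref{equiv-2} — equivalent to \eqref{equiv-1}, hence valid for every operator monotone $f$ by Theorem~\ref{HPconvex} — reads $\beta Q^{\rho,\sigma}_{f_{[0]}}\beta^{*}\le Q^{\beta(\rho),\beta(\sigma)}_{f_{[0]}}$, i.e.\ $f_{[0]}\in\mathfrak{C}^{-}$, and then (3) gives $f^{-1}_{[0]}=1/f_{[0]}\in\mathfrak{C}^{+}$.

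The only real content lies in the two foundational facts — making the translation identity rigorous for operators with continuous spectrum, and justifying the invertibility of $Q^{\rho,\sigma}_{F}$ so that the inversion lemma applies (restrict to invertible $\rho,\sigma,\beta(\rho),\beta(\sigma)$ and pass to the limit $\epsilon\to 0^{+}$ in the perturbation $\rho+\epsilon I$, as in the remark after Theorem~\ref{HPconvex}). The point to watch is that (2) uses $\beta(I)=I$, so it is naturally read for the unital (doubly stochastic) maps relevant to the semigroups and conditional expectations appearing later; for a genuinely non-unital trace-preserving $\beta$ the translation no longer intertwines with the channel and would need a separate argument. With these settled, each of (1)--(4) is a one-line consequence of, respectively, linearity of the double operator integral, the translation identity, the inversion lemma, and Theorem~\ref{HPconvex}.
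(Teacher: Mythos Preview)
Your proof is correct and follows the same route as the paper, which in fact only argues (3) and (4): it invokes the equivalence between $\beta^{*}(Q^{\beta(\rho),\beta(\sigma)}_{F})^{-1}\beta\le (Q^{\rho,\sigma}_{F})^{-1}$ and $\beta Q^{\rho,\sigma}_{F}\beta^{*}\le Q^{\beta(\rho),\beta(\sigma)}_{F}$ (your ``inversion lemma'') for (3), and then Theorem~\ref{HPconvex} together with (3) for (4). Your treatments of (1) and (2) via linearity and the spectral translation identity simply fill in what the paper leaves to the reader; your caveat that (2) as written uses $\beta(I)=I$ is a legitimate observation the paper does not address, and is harmless for the doubly stochastic maps (semigroups, conditional expectations) that actually occur downstream.
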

\begin{proof} We only give proofs for (3) and (4). The equivalence between
\begin{align}
\beta^{*} (Q^{\beta(\rho),\beta(\sigma)}_{F})^{-1}\beta \leq (Q^{\rho,\sigma}_{F})^{-1} \label{equi-1}
\end{align}
and 
\begin{align}
\beta Q^{\rho,\sigma}_{F}\beta^{*} \leq Q^{\beta(\rho),\beta(\sigma)}_{F}. \label{equi-2}
\end{align}
yields (3). (4) follows directly from Theorem \ref{HPconvex} and (3).
\end{proof}
\begin{example}\label{hpl} Let $f(x)=\frac{x-1}{\ln(x)}$, then
$f$ is operator monotone. Indeed, 
$f(x)=\int_{0}^{1}x^{r}dr$
and $x^{r}$ is operator monotone for $r\in[0,1]$. 
Then we have  $$f_{[0]}(x,y)=\frac{x-y}{\ln(x)-\ln(y)}\in\mathfrak{C}^{-} \quad \text{and} \quad f_{[0]}^{-1}(x,y)=\frac{\ln(x)-\ln(y)}{x-y}\in\mathfrak{C}^{+}.$$
\end{example}

\noindent Let $f^{[1]}(x,y)=\frac{f(x)-f(y)}{x-y}$ denote the difference quotient of $f$. We consider the following set 
\begin{align} 
\mathfrak{c}^{+}=\{f'|f^{[1]}\in\mathfrak{C}^{+}\}.
\end{align}

\begin{prop} \label{cp} We have the following properties.
\item[\rm{(1)}]  The set $\mathfrak{c}^{+}$ is a positive cone.
\item[\rm{(2)}] The set $\mathfrak{c}^{+}$ is invariant under right translation.
\item[\rm{(3)}]  Let $f(x)=\frac{1}{x+k}$ with $k\geq 0$, then $f\in\mathfrak{c}^{+}$. 
\end{prop}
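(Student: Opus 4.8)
The plan is to deduce all three parts from the properties of $\mathfrak{C}^{+}$ already recorded in Proposition \ref{dcp}, using two elementary features of the difference-quotient operation $f\mapsto f^{[1]}$: it is linear in $f$, and it is unchanged when a constant is added to $f$ (constants cancel in $\frac{f(x)-f(y)}{x-y}$), so that ``$g\in\mathfrak{c}^{+}$'' is a genuine property of $g$ alone, independent of the choice of antiderivative. For (1), suppose $g_{i}=f_{i}'\in\mathfrak{c}^{+}$ for $i=1,2$ and let $a,b>0$. Linearity gives $(af_{1}+bf_{2})^{[1]}=a\,f_{1}^{[1]}+b\,f_{2}^{[1]}$, which lies in $\mathfrak{C}^{+}$ since $f_{1}^{[1]},f_{2}^{[1]}\in\mathfrak{C}^{+}$ and $\mathfrak{C}^{+}$ is a positive cone by Proposition \ref{dcp}(1); as $(af_{1}+bf_{2})'=ag_{1}+bg_{2}$, we get $ag_{1}+bg_{2}\in\mathfrak{c}^{+}$.

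For (2), fix $t\ge 0$ and $g=f'\in\mathfrak{c}^{+}$, and interpret the right translate as $g_{t}(x)=g(x+t)$. Putting $\tilde f(x)=f(x+t)$ we have $\tilde f'=g_{t}$ and
\[
\tilde f^{[1]}(x,y)=\frac{f(x+t)-f(y+t)}{x-y}=\frac{f(x+t)-f(y+t)}{(x+t)-(y+t)}=f^{[1]}(x+t,\,y+t).
\]
Since $f^{[1]}\in\mathfrak{C}^{+}$, Proposition \ref{dcp}(2) applied with both shift parameters equal to $t$ gives $\tilde f^{[1]}\in\mathfrak{C}^{+}$, hence $g_{t}=\tilde f'\in\mathfrak{c}^{+}$.

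For (3), I would observe that $1/x$ has the increasing antiderivative $\ln$, whose difference quotient $(x,y)\mapsto\frac{\ln x-\ln y}{x-y}$ lies in $\mathfrak{C}^{+}$ by Example \ref{hpl}; hence $1/x\in\mathfrak{c}^{+}$. For $k\ge 0$ the function $\frac{1}{x+k}$ is precisely the right translate of $1/x$ by $k$, so $\frac{1}{x+k}\in\mathfrak{c}^{+}$ follows from part (2). (Equivalently, use the antiderivative $\ln(x+k)$ of $\frac{1}{x+k}$ and apply Proposition \ref{dcp}(2) to the function of Example \ref{hpl} with both shifts equal to $k$.) The whole proposition is essentially bookkeeping and I expect no substantive obstacle; the only point requiring a little care is the meaning of ``right translation'' and its match with the two-parameter shift of Proposition \ref{dcp}(2) --- namely that the relevant operation is the additive shift $g(\cdot)\mapsto g(\cdot+t)$ with $t\ge 0$, which keeps one within the class of $\Bbb{R}^{+}$-valued functions on $\Bbb{R}^{+}$ (so the $Q_{F}^{\rho,\sigma}$'s and metrics $\gamma^{F}$ stay well defined) and, after passing to difference quotients, corresponds to the diagonal shift $(x,y)\mapsto(x+t,y+t)$.
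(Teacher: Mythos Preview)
Your proposal is correct and follows essentially the same approach as the paper. The paper only writes out part (3), taking $g(x)=\ln(x+k)$ and invoking Example \ref{hpl} together with Proposition \ref{dcp}(2) to conclude $g^{[1]}\in\mathfrak{C}^{+}$, hence $g'=\frac{1}{x+k}\in\mathfrak{c}^{+}$; your argument for (3) is exactly this (your parenthetical ``equivalently'' is the paper's proof verbatim), and your treatments of (1) and (2) are precisely the routine verifications the paper leaves implicit.
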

\begin{proof} We only give the proof of (3). Let $g(x)=\ln(x+k)$. By Example \ref{hpl} and Proposition \ref{dcp}, we have $g^{[1]}\in\mathfrak{C}^{+}$. If follows from the definition that $f=g'\in\mathfrak{c}^{+}$.
\end{proof}
\begin{example} Let $F(x,y)=\frac{x^{p}-y^{p}}{x-y}$ for $p\in(0,1)$.
Let us recall that $$f(x)=x^{p}=\frac{sin(p\pi)x}{\pi}\int_{0}^{\infty}\frac{r^{p-1}}{r+x}dr.$$
By Proposition \ref{cp}, we have $f'\in \mathfrak{c}^{+}$ and $F=f^{[1]}\in\mathfrak{C}^{+}$.
We cannot find a function $f$ such that $F^{-1}=f_{[0]}$. Thus $\mathfrak{C}^{+}$ is a strict extension of the operator monotone functions.
\end{example}
\begin{remark} In a discussion, we noticed that Haonan Zhang also gave a proof of the example above separately. Haonan Zhang was trying to develop the matrix-valued Beckner inequalities using the geodesic convexity techniques in \cite{CM2} and \cite{CM}.
\end{remark}
\section{Complete Sobolev type inequality} 
\subsection{Derivations}
Let $\mathcal{N}$ be a finite von Neumann algebra equipped with a normal faithful tracial state $\tau$.
Let $_{\mathcal{N}}\Hs_{\mathcal{N}}$ be a self-adjoint Hilbert $\mathcal{N}$-$\mathcal{N}$ bimodule with the antilinear form $J$. A  derivation of a von Neumann algebra $\mathcal{N}$ is a densely defined linear operator $\delta: L_{2}(\mathcal{N},\tau)\rightarrow \Hs$ such that
\begin{itemize}[leftmargin=6.0mm,nolistsep]
\item[(1)] $\dom(\delta)$ is a weakly dense $^*$-subalgebra in $\mathcal{N}$;
\item[(2)] the identity element $1\in \dom(\delta)$;
\item[(3)] $\delta(xy)=x\delta(y)+\delta(x)y$, for any $x,y\in \dom(\delta)$.
\end{itemize}
\noindent 
We always work with a closable derivation and denote the closure by $\bar{\delta}$.
A derivation $\delta$ is said to be \textit{$*$-preserving} if $J(\delta(x))=\delta(x^*)$. Every closable $*$-preserving derivation $\delta$ determines a positive operator $\delta^{*}\bar{\delta}$ on $L_{2}(\mathcal{N},\tau)$. 
It was shown  in \cite{Sau} that $T_{t}=e^{-t\delta^{*}\bar{\delta}}: \mathcal{N} \rightarrow \mathcal{N}$ is a strongly continuous semigroup of CPTP maps. 
 See\cite{SA}, \cite{AS}, \cite{Jesse}, \cite{Kap}, and \cite{BR} for more details.
The functional calculus of a derivation $\delta$ is given by
\begin{align}\label{fcd} \delta(f(\rho))=Q_{f^{[1]}}^{\rho}(\delta(\rho))=\int_{0}^{\infty} \int_{0}^{\infty} \frac{f(s)-f(t)}{s-t}dE_{\rho}(s)\delta(\rho)dE_{\rho}(t).
\end{align}

Now let $T_t=e^{-tA}:\mathcal{N}\rightarrow \mathcal{N}$ be a strongly continuous semigroup of completely positive unital self-adjoint maps on $L_2(\mathcal{N},\tau)$. The generator $A$ is a positive operator on $L_{2}(\mathcal{N},\tau)$ given by $$A(x)=\lim_{t\rightarrow 0^+} \frac{1}{t}(T_t(x)-x),\forall x\in \dom(A).$$
It was pointed out  in \cite{Sau} that  $\dom(\delta)=\{x\in \mathcal{N}| \|A^{1/2}x\|_2<\infty\}$ is indeed a $*$-algebra and  invariant under the semigroup.
The weak gradient form of $A$ is defined by
$$\Gamma_A(x,y)(z)=\frac{1}{2}(\tau(A(x)^*yz)+\tau(x^*A(y)z)-\tau(x^*yA(z))).$$
If the weak gradient form $\Gamma_A(x,y)\in L_{1}(\mathcal{N})$ for all $x,y\in \dom(A^{1/2})$, we say the generator $A$ (or $
T_{t}$) satisfies \textit{$\Gamma$-regularity}.
\begin{theorem}[\cite{JRS18}] \label{jsr} If $A$ satisfies $\Gamma$-regularity, then there exists a finite von Neumann algebra $(\mathcal{M},\tau)$ containing $\mathcal{N}$ and a $*$-preserving derivation $\delta_{A}:\dom(A^{1/2})\rightarrow L_2(\mathcal{M})$ such that
\begin{equation}\label{inducedder}\tau(\Gamma_{A}(x,y)z)=\tau(\delta_{A}(x)^*\delta_{A}(y)z).\end{equation}
Equivalently $\Gamma_{A}(x,y)=E_{\mathcal{N}}(\delta_{A}(x)^*\delta_{A}(y))$, where $E_{\mathcal{N}}: \mathcal{M}\rightarrow \mathcal{N}$ is the conditional expectation.
\end{theorem}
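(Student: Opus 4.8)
\noindent The plan is to recover $\delta_A$ from the gradient form $\Gamma_A$ by the noncommutative version of the Cipriani--Sauvageot construction of a derivation as a ``square root'' of a Dirichlet form, which is the route taken in \cite{JRS18}. Write $\mathcal A=\dom(A^{1/2})$; by the discussion above it is a weakly dense $^*$-subalgebra of $\mathcal N$, invariant under $T_t$, and $\Gamma$-regularity is exactly the requirement that $\Gamma_A(x,y)\in L_1(\mathcal N)$ for all $x,y\in\mathcal A$. Since $A$ is self-adjoint for $\tau$ and $A(1)=0$ (as $T_t$ is unital), differentiating at $t=0$ gives the carr\'e-du-champ identity $\Gamma_A(x,y)=\lim_{t\to 0^+}\tfrac{1}{2t}\bigl(T_t(x^*y)-T_t(x)^*T_t(y)\bigr)$ in $L_1(\mathcal N)$, together with $\Gamma_A(x,y)^*=\Gamma_A(y,x)$ and $\tau(\Gamma_A(x,y))=\langle A^{1/2}x,A^{1/2}y\rangle_{L_2(\mathcal N)}$.

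The first substantive step is to show that the kernel $(x,y)\mapsto\Gamma_A(x,y)$ is \emph{completely positive}: for $x_1,\dots,x_n\in\mathcal A$ the map $\id_{M_n}\otimes T_t$ is unital and $2$-positive, so $[T_t(x_i^*x_j)]\ge[T_t(x_i)^*T_t(x_j)]$ in $M_n(\mathcal N)$; conjugating by a column $(z_1,\dots,z_n)$ with $z_i\in\mathcal N$, dividing by $2t$ and letting $t\to0^+$ yields $\sum_{i,j}z_i^*\Gamma_A(x_i,x_j)z_j\ge0$ in $L_1(\mathcal N)$. This positivity is precisely what is needed to run the tangent bimodule construction: one obtains a Hilbert $\mathcal N$--$\mathcal N$ bimodule (correspondence) $H$, realised as a completion of a quotient of $\mathcal A\odot\mathcal A$ by the algebraic Leibniz relations, carrying an $\mathcal N$-valued inner product $\langle\cdot,\cdot\rangle_{\mathcal N}$, together with a densely defined linear map $\delta_A\colon\mathcal A\to H$ with $\langle\delta_A(x),\delta_A(y)\rangle_{\mathcal N}=\Gamma_A(x,y)$. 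In particular $\|\delta_A(x)\|_H^2=\tau(\Gamma_A(x,x))=\|A^{1/2}x\|_2^2$, so $\delta_A=U A^{1/2}$ for a partial isometry $U$, whence $\delta_A$ is closable and $\delta_A^*\delta_A=A$. The Leibniz rule $\delta_A(xy)=x\,\delta_A(y)+\delta_A(x)\,y$ is the Leibniz relation built into the quotient defining $H$, and $\delta_A$ is $^*$-preserving for the natural antilinear involution $J$ on $H$.

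It remains to realise the abstract correspondence $(H,J)$ as a sub-$\mathcal N$-bimodule of $L_2(\mathcal M)$ for a \emph{finite} von Neumann algebra $\mathcal M\supseteq\mathcal N$ with a $\tau$-preserving normal conditional expectation $E_{\mathcal N}\colon\mathcal M\to\mathcal N$. Here I would apply the $\mathcal N$-valued free (semicircular) Gaussian functor to $(H,J)$: let $\mathcal F=L_2(\mathcal N)\oplus\bigoplus_{n\ge1}H^{\otimes_{\mathcal N}n}$ be the full Fock module with vacuum $\Omega\in L_2(\mathcal N)$, let $\mathcal M$ be generated by the left $\mathcal N$-action together with the field operators $s(\xi)=\ell(\xi)+\ell(J\xi)^*$, $\xi\in H$, and take $E_{\mathcal N}$ to be the compression onto $L_2(\mathcal N)\subseteq\mathcal F$. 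Because $(H,J)$ is a \emph{symmetric} correspondence over the tracial $(\mathcal N,\tau)$, the vacuum state $\varphi(a)=\langle\Omega,a\Omega\rangle$ is a normal faithful \emph{trace} on $\mathcal M$ restricting to $\tau$, so $\mathcal M$ is finite and $L_2(\mathcal M)\cong\mathcal F$ with $H$ sitting as the degree-one subspace via $\xi\mapsto s(\xi)\Omega$. Setting $\delta_A(x):=s\bigl(\delta_A^{\mathrm{abs}}(x)\bigr)\Omega\in L_2(\mathcal M)$ with $v_x=\delta_A^{\mathrm{abs}}(x)$, a direct computation of $E_{\mathcal N}$ on the product $\delta_A(x)^*\delta_A(y)$ of two field vectors collapses to the degree-zero term $\ell(v_x)^*\ell(v_y)$, giving $E_{\mathcal N}(\delta_A(x)^*\delta_A(y))=\langle v_x,v_y\rangle_{\mathcal N}=\Gamma_A(x,y)$, which is \eqref{inducedder}.

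The step I expect to be the real obstacle is guaranteeing that $\mathcal M$ is \emph{finite}: a generic Fock/Gaussian realisation only yields a von Neumann algebra with a faithful state, and traciality of the vacuum forces one to use that $H$ is a symmetric $\mathcal N$--$\mathcal N$ correspondence with no modular deformation, which itself rests on $A$ being $\tau$-self-adjoint; the verification is a moment computation on alternating words in the field operators $s(\xi)$ and the elements of $\mathcal N$. A secondary technical point, pervasive throughout, is that $\Gamma_A(x,y)$ lies in $L_1(\mathcal N)$ rather than in $\mathcal N$, so complete positivity of the kernel, boundedness of the left and right actions on the quotient, and the passage to closures and cores of $A^{1/2}$ all have to be handled with $L_1$-valued inner products and density arguments on $\mathcal A$ --- which is exactly the role of the $\Gamma$-regularity hypothesis.
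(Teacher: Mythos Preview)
The paper does not give its own proof of this theorem: it is stated with the attribution \cite{JRS18} and then used as a black box throughout Section~3. There is therefore nothing in the paper to compare your argument against. Your sketch --- complete positivity of the kernel $\Gamma_A$, the Cipriani--Sauvageot tangent bimodule construction to obtain an abstract derivation, and then the $\mathcal N$-valued free Gaussian functor to realise the correspondence inside $L_2(\mathcal M)$ for a finite $\mathcal M$ --- is a reasonable outline of the argument one would find in \cite{JRS18}, and you have correctly identified the two genuinely nontrivial points (traciality of the vacuum on the Fock module, and handling $L_1$-valued inner products under the $\Gamma$-regularity hypothesis). But as far as this paper is concerned, the theorem is simply quoted.
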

Throughout the paper,  we always work with a closable $*$-preserving derivation $\delta$ and a strongly continuous semigroup $T_{t}=e^{-tA}$ of completely positive unital self-adjoint maps on $L_2(\mathcal{N},\tau)$  satisfying $\Gamma$-regularity.   

\subsection{Generalized Fisher information}
The $f$-Fisher information $I^{f,\tau}_{A}$ of $A$ is defined as
$$I^{f,\tau}_{A}(\rho)=\tau(A(\rho) f'(\rho)), \forall \rho\in\dom(A^{1/2})\cap L_{2}(\mathcal{N}) \text{ and } f'(\rho)\in L_{\infty}(\mathcal{N}).$$
\noindent Equivalently  $$I^{f,\tau}_{A}(\rho)=\lim_{\epsilon\rightarrow 0^+}\tau(A(\rho)f'(\rho+\epsilon 1)).$$ 
For a derivation $\delta$, the Fisher information is defined as
\begin{align}\label{fisher-d}
I_{\delta}^{f,\tau}(\rho)=\tau\left(\delta(\rho)Q^{\rho}_{f^{[2]}}(\delta(\rho))\right), \quad \forall\rho\in \dom(\delta)\subset \mathcal{N},
\end{align}
where 
$$f^{[2]}(x,y)=\frac{f'(x)-f'(y)}{x-y}.$$
Then $I_{\delta}^{f}(\rho)=I_{\delta^{*}\bar{\delta}}^{f}(\rho).$ We use $I_{A}^{f}$ or $I_{\delta}^{f}$ if the trace is clear from the context.  In the rest of this section, we always consider  convex and continuously differentiable $f: \Bbb{R}^{+}\to \Bbb{R}^{+}$ such that $f^{[2]}\in\mathfrak{C}^{+}$.
By Theorem \ref{jsr}, for any $A$ satisfying $\Gamma$-regularity, there exists a closable $*$-preserving derivation $\delta_{A}:\dom(A^{1/2})\to L_{2}(\mathcal{M})$ such that $\Gamma_{A}(x,y)=E_{\mathcal{N}}(\delta_{A}(x)^*\delta_{A}(y))$ where $E_{\mathcal{N}}: \mathcal{M}\to\mathcal{N}$.
Thus $$I_{A}^{f}(\rho)=I_{\delta_{A}}^{f}(\rho).$$ The choice of $\delta_{A}$ is not necessarily unique, but $I_{A}^{f}$ is uniquely determined. We recapture the widely used  Fisher information $I_{A}(\rho)=\tau(A(\rho)\ln(\rho))$ by choosing $f(x)=x\ln(x)$.  We shall also observe the relation between the $f$-Fisher information and the generalized monotone metric 
$$I_{\delta}^{f}(\rho)=\gamma^{f^{[2]}}_{\rho,\rho}(\delta(\rho),\delta(\rho)).$$ 
\begin{lemma}[non-negativity] \label{finn}The $f$-Fisher information is nonnegative.
\end{lemma}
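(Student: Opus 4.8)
The plan is to reduce the statement to the positivity of the Schur multiplier $Q^{\rho}_{f^{[2]}}$ on $\mathcal{N}$. Writing the Fisher information via the generalized monotone metric identity $I_{\delta}^{f}(\rho)=\gamma^{f^{[2]}}_{\rho,\rho}(\delta(\rho),\delta(\rho)) = \langle \delta(\rho), Q^{\rho}_{f^{[2]}}(\delta(\rho))\rangle$ already on display just above the lemma, it suffices to show that $Q^{\rho}_{f^{[2]}}$ is a positive operator on $L_{2}(\mathcal{N},\tau)$, since then $\langle a, Q^{\rho}_{f^{[2]}}(a)\rangle\geq 0$ for $a=\delta(\rho)$. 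By a perturbation argument ($\rho+\epsilon 1$, $\epsilon\to 0^{+}$) and normality of the trace, we may assume $\rho$ has discrete spectrum, say $\rho=\sum_{i}s_{i}p_{i}$ with $s_{i}>0$ and $\sum_i p_i = 1$. Then $Q^{\rho}_{f^{[2]}}$ acts as the Schur multiplier $a\mapsto \sum_{i,j} f^{[2]}(s_i,s_j)\,p_i\, a\, p_j$, so for $a\in\mathcal{N}$ we compute $\langle a, Q^{\rho}_{f^{[2]}}(a)\rangle=\sum_{i,j} f^{[2]}(s_i,s_j)\,\tau(p_i a^{*} p_j a)$. Each term $\tau(p_i a^{*}p_j a)=\tau((p_j a p_i)^{*}(p_j a p_i))=\|p_j a p_i\|_2^2\geq 0$, so the whole sum is a nonnegative combination once we know $f^{[2]}(s_i,s_j)\geq 0$ for all $i,j$.

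Thus the crux is: for convex continuously differentiable $f:\mathbb{R}^{+}\to\mathbb{R}^{+}$, the divided difference $f^{[2]}(x,y)=\frac{f'(x)-f'(y)}{x-y}$ is nonnegative for all $x,y>0$. This is immediate: convexity of $f$ means $f'$ is nondecreasing, so $f'(x)-f'(y)$ and $x-y$ always have the same sign (or both vanish), hence their quotient is $\geq 0$; and for $x=y$ one interprets $f^{[2]}(x,x)=f''(x)\geq 0$ (or takes the limit along the spectral decomposition, where the diagonal Schur coefficient is just $\lim_{y\to x} f^{[2]}(x,y)\geq 0$ by the same monotonicity). Note we do not even need the hypothesis $f^{[2]}\in\mathfrak{C}^{+}$ for this lemma — only convexity of $f$ — which explains why the statement is phrased so simply.

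I do not anticipate a genuine obstacle here; the only point requiring a little care is the passage from general positive $\rho$ (possibly with continuous spectrum) to the discrete case so that the Schur-multiplier formula literally applies. This is handled exactly as in \cite{LJL}: replace $\rho$ by $\rho+\epsilon 1$ to ensure invertibility and apply the spectral theorem, approximating $\rho$ in the strong operator topology by operators with finite spectrum; the map $\rho\mapsto Q^{\rho}_{f^{[2]}}$ and the inner product are continuous under this approximation because $f^{[2]}$ is continuous on $\mathbb{R}^{+}\times\mathbb{R}^{+}$, and each approximant gives a nonnegative value by the computation above. Taking the limit yields $I_{\delta}^{f}(\rho)\geq 0$, and the equality $I_{A}^{f}(\rho)=I_{\delta_A}^{f}(\rho)$ from Theorem \ref{jsr} transfers nonnegativity to the generator form $I_A^f$ as well.
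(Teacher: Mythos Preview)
Your proof is correct and takes essentially the same approach as the paper: convexity of $f$ gives $f^{[2]}\geq 0$, whence the quadratic form $\langle\delta(\rho),\,Q^{\rho}_{f^{[2]}}(\delta(\rho))\rangle$ is nonnegative. The paper bypasses your discrete-spectrum approximation by directly defining the square root $(Q^{\rho}_{f^{[2]}})^{1/2}$ as the double operator integral with kernel $(f^{[2]})^{1/2}$ and writing $I_A^f(\rho)=\tau(E_{\mathcal{N}}(ww))\geq 0$ for $w=(Q^{\rho}_{f^{[2]}})^{1/2}(\delta_A(\rho))$, which works for arbitrary spectrum in one line.
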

\begin{proof}  The convexity of $f$ implies that $f^{[2]}\geq 0$.  Set $w=(Q^{\rho}_{f^{[2]}})^{1/2}(\delta_{A}(\rho)) $ with $$(Q^{\rho}_{f^{[2]}})^{1/2}(y)=\int_{0}^{\infty}\int_{0}^{\infty} \left(\frac{f'(s)-f'(t)}{s-t}\right)^{1/2}dE_{\rho}(s)y dE_{\rho}(t).$$ Thus
\begin{align}I_{A}^{f}(\rho)&=\tau(\delta_{A}(\rho)Q_{f^{[2]}}^{\rho}(\delta_{A}(\rho)))\\
&=\tau\left(E_\mathcal{N}(ww) \right)\geq 0.
\end{align}
Similarly $I_{\delta}$ is also nonnegative.
\end{proof}
\noindent An important example is $f(x)=x^{p}$ for $p\in(1,2)$, and we denote such $p$-Fisher information by $I_{A}^{p}$ or $I_{\delta}^{p}$. As an application of Theorem \ref{tgmc}, we get the following result.
\begin{corollary}\label{pfc} The $p$-Fisher information is convex.
\end{corollary}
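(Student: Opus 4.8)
The plan is to deduce Corollary \ref{pfc} directly from Theorem \ref{tgmc} applied to the two-variable function $F = f^{[2]}$ where $f(x) = x^p$. Recall from the discussion preceding the corollary that $I_\delta^f(\rho) = \gamma^{f^{[2]}}_{\rho,\rho}(\delta(\rho),\delta(\rho))$, and that $I_A^p(\rho) = I_{\delta_A}^p(\rho)$ for the derivation $\delta_A$ furnished by Theorem \ref{jsr}. So the task reduces to checking that $\gamma^F_{\rho,\rho}(\delta(\rho),\delta(\rho))$ is convex in $\rho$, and for this it suffices (by Theorem \ref{tgmc}) to verify two things: that $F = f^{[2]} \in \mathfrak{C}^+$, and that $F$ satisfies the scaling bound $\lambda F(\lambda x, \lambda y) \le F(x,y)$ for all $\lambda \in [0,1]$.

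First I would record the explicit form of $F$. For $f(x) = x^p$ we have $f'(x) = p x^{p-1}$, hence
\begin{align*}
F(x,y) = f^{[2]}(x,y) = \frac{f'(x) - f'(y)}{x-y} = p\,\frac{x^{p-1} - y^{p-1}}{x - y}.
\end{align*}
Since $p \in (1,2)$, we have $p - 1 \in (0,1)$, so the function $g(x) = x^{p-1}$ is of the form covered by the Example with exponent $p-1 \in (0,1)$: there $G(x,y) = \frac{x^{q} - y^{q}}{x-y} \in \mathfrak{C}^+$ for $q \in (0,1)$ was established via the integral representation and Proposition \ref{cp}. Thus $F = p\,G$ with $q = p-1$, and since $\mathfrak{C}^+$ is a positive cone by Proposition \ref{dcp}(1), $F \in \mathfrak{C}^+$.

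Next I would check the scaling condition. Because $f'(\lambda x) = p(\lambda x)^{p-1} = \lambda^{p-1} f'(x)$, a direct computation gives
\begin{align*}
F(\lambda x, \lambda y) = p\,\frac{(\lambda x)^{p-1} - (\lambda y)^{p-1}}{\lambda x - \lambda y} = \lambda^{p-2}\, p\,\frac{x^{p-1} - y^{p-1}}{x-y} = \lambda^{p-2} F(x,y).
\end{align*}
Hence $\lambda F(\lambda x, \lambda y) = \lambda^{p-1} F(x,y) \le F(x,y)$ for $\lambda \in [0,1]$, since $p - 1 > 0$ and $F \ge 0$ (the latter from convexity of $f$, as in Lemma \ref{finn}). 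So both hypotheses of Theorem \ref{tgmc} hold.

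Finally, Theorem \ref{tgmc} yields that $(\rho,\sigma,a) \mapsto \gamma^F_{\rho,\sigma}(a,a)$ is jointly convex; restricting to the diagonal $\sigma = \rho$ and substituting $a = \delta_A(\rho)$ — which is linear in $\rho$ — preserves convexity, so $\rho \mapsto \gamma^F_{\rho,\rho}(\delta_A(\rho),\delta_A(\rho)) = I_A^p(\rho)$ is convex. The only point requiring a little care is the substitution step: one must note that precomposing a jointly convex function with an affine map in its arguments again gives a convex function, and that $\delta_A$ acts linearly on its domain. I expect the main (mild) obstacle to be making sure the domain issues are handled — that $\delta_A(\rho)$ is well-defined and the expression lies in the regime where Theorem \ref{tgmc} applies — but since we already restrict throughout to $f$ with $f^{[2]} \in \mathfrak{C}^+$ and $\rho \in \dom(\delta)$, this is routine.

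\begin{proof}
For $f(x) = x^p$ with $p \in (1,2)$ we have $f'(x) = p x^{p-1}$, so
\begin{align*}
f^{[2]}(x,y) = \frac{f'(x) - f'(y)}{x-y} = p\,\frac{x^{p-1} - y^{p-1}}{x-y}.
\end{align*}
Since $p - 1 \in (0,1)$, the preceding example together with Proposition \ref{dcp}(1) shows $F := f^{[2]} \in \mathfrak{C}^{+}$. Moreover, using $f'(\lambda x) = \lambda^{p-1} f'(x)$,
\begin{align*}
F(\lambda x, \lambda y) = p\,\frac{(\lambda x)^{p-1} - (\lambda y)^{p-1}}{\lambda x - \lambda y} = \lambda^{p-2} F(x,y),
\end{align*}
whence $\lambda F(\lambda x, \lambda y) = \lambda^{p-1} F(x,y) \le F(x,y)$ for $\lambda \in [0,1]$, as $p-1 > 0$ and $F \ge 0$ by convexity of $f$. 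Thus the hypotheses of Theorem \ref{tgmc} are satisfied, and $(\rho,\sigma,a) \mapsto \gamma^{F}_{\rho,\sigma}(a,a)$ is convex. Restricting to $\sigma = \rho$ and composing with the linear map $\rho \mapsto \delta_A(\rho)$ in the last argument preserves convexity, so
\begin{align*}
\rho \mapsto I_A^{p}(\rho) = I_{\delta_A}^{p}(\rho) = \gamma^{f^{[2]}}_{\rho,\rho}(\delta_A(\rho), \delta_A(\rho))
\end{align*}
is convex.
\end{proof}
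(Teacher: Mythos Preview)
Your proof is correct and follows exactly the route the paper indicates: it merely states the corollary ``as an application of Theorem~\ref{tgmc}'' without spelling out the details, and you have supplied precisely those details --- verifying $f^{[2]}\in\mathfrak{C}^{+}$ via the Example with exponent $p-1\in(0,1)$ and Proposition~\ref{dcp}(1), checking the scaling hypothesis $\lambda F(\lambda x,\lambda y)\le F(x,y)$, and then composing the resulting joint convexity with the affine map $\rho\mapsto(\rho,\rho,\delta_A(\rho))$.
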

Recall that for any finite von Neumann algebra $\mathcal{N}$,
there exists a $\sigma$-finite measure space $(X,\mu)$ such that $\mathcal{Z}(\mathcal{N})\cong L_{\infty}(X,\mu)$ and $\mathcal{N}=\int_{X} \mathcal{N}_{x}d\mu(x)$, where $\mathcal{Z}(\mathcal{N})$ is the center of $\mathcal{N}$ and $\mathcal{N}_{x}$ is a factor for any $x\in X$.  Now we rewrite the $f$-Fisher information  by using the direct integral
  $$I^{f,\tau}_{\delta}(\rho)=\int_{X} I^{f,\tau_{x}}_{\delta}(\rho_{x})d\mu(x).$$
\begin{lemma} \label{fip} Let $\tau_{1}$ and $\tau_{2}$ be normal faithful traces over $\mathcal{N}$ and $\frac{d\tau_{1}}{d\tau_{2}}\geq c$ for some  $c>0$. Then for any $\rho\in\mathcal{N}_{+}$,
$$c I_{A}^{f,\tau_{2}}(\rho)\leq I_{A}^{f,\tau_{1}}(\rho).$$
The result remains true for $I_{\delta}^{f}$.
\end{lemma}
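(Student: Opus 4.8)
The plan is to reduce the statement to the central (direct integral) decomposition of $\mathcal{N}$, where the Radon--Nikodym derivative $h:=\frac{d\tau_{1}}{d\tau_{2}}$ becomes a scalar on each fibre, and then to combine the obvious scaling behaviour of the Fisher information in the trace with the non-negativity established in Lemma~\ref{finn}. A purely global argument via $\tau_{1}=\tau_{2}(h\,\cdot\,)$ does not close, because $A(\rho)f'(\rho)$ need not be a positive operator (only its trace is controlled once $A$ is symmetric), so one genuinely has to localise.

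First, recall that for two normal faithful traces $\tau_{1},\tau_{2}$ on a finite von Neumann algebra, traciality forces $h=\frac{d\tau_{1}}{d\tau_{2}}$ to be a positive element affiliated with the centre $\mathcal{Z}(\mathcal{N})$, and the hypothesis reads $h\geq c\cdot 1$. Under $\mathcal{Z}(\mathcal{N})\cong L_{\infty}(X,\mu)$ and $\mathcal{N}=\int_{X}\mathcal{N}_{x}\,d\mu(x)$ this means $h$ is a function with $h(x)\geq c$ for $\mu$-a.e.\ $x$, and, since every normal faithful trace on the factor $\mathcal{N}_{x}$ is a positive scalar multiple of any other, $\tau_{1,x}=h(x)\,\tau_{2,x}$. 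I then invoke the direct-integral formula recorded just above the lemma, $I^{f,\tau_{i}}_{\delta}(\rho)=\int_{X}I^{f,\tau_{i,x}}_{\delta}(\rho_{x})\,d\mu(x)$, so it suffices to prove $c\,I^{f,\tau_{2,x}}_{\delta}(\rho_{x})\leq I^{f,\tau_{1,x}}_{\delta}(\rho_{x})$ for a.e.\ $x$.

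The fibrewise estimate rests on two points. \emph{Scaling}: $I^{f,h(x)\tau_{2,x}}_{\delta}(\rho_{x})=h(x)\,I^{f,\tau_{2,x}}_{\delta}(\rho_{x})$. Indeed, rescaling the trace of a factor by a scalar leaves the spectral data of $\rho$, the derivation, and hence the double operator integral $Q^{\rho}_{f^{[2]}}$ untouched, multiplying only the trace on the ambient algebra $\mathcal{M}$ by $h(x)$; equivalently one reads it off directly from $I^{f,\tau}_{A}(\rho)=\tau(A(\rho)f'(\rho))$, which is linear in $\tau$ with $A,\rho,f'(\rho)$ fixed. \emph{Non-negativity}: $I^{f,\tau_{2,x}}_{\delta}(\rho_{x})\geq 0$, which is Lemma~\ref{finn} applied on the fibre (equivalently, the disintegration of the manifestly positive element $E_{\mathcal{N}}(ww)$ from that proof). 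Hence $I^{f,\tau_{1,x}}_{\delta}(\rho_{x})=h(x)\,I^{f,\tau_{2,x}}_{\delta}(\rho_{x})\geq c\,I^{f,\tau_{2,x}}_{\delta}(\rho_{x})$ because $h(x)\geq c$, and integrating over $X$ against $d\mu$ yields $c\,I^{f,\tau_{2}}_{\delta}(\rho)\leq I^{f,\tau_{1}}_{\delta}(\rho)$. The identity $I^{f}_{A}=I^{f}_{\delta_{A}}$ gives the statement for $I^{f}_{A}$, and the same argument applies verbatim to $I^{f}_{\delta}$ for a general derivation $\delta$.

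The main obstacle is the bookkeeping in the scaling step: one must make sure that, after passing to a factor where $h$ has collapsed to a scalar, the derivation and double-operator-integral data manufactured from $A$ and $\rho$ are genuinely trace-independent, so that the Fisher information depends on the trace only through that overall scalar and no extra defect term (of the type measuring the failure of $A$ to commute with a non-scalar central element) survives. The remaining ingredients --- centrality of $\frac{d\tau_{1}}{d\tau_{2}}$, proportionality of traces on factors, and interchanging the inequality with the direct integral --- are routine.
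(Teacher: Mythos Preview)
Your proof is correct and follows essentially the same route as the paper: disintegrate over the centre, use non-negativity of the fibrewise Fisher information (Lemma~\ref{finn}), and compare via $\frac{d\tau_{1}}{d\tau_{2}}\geq c$. The only cosmetic difference is bookkeeping: the paper keeps a single fibre trace and pushes the Radon--Nikodym derivative into two different base measures $\mu_{1},\mu_{2}$ with $\frac{d\mu_{1}}{d\mu_{2}}\geq c$, whereas you keep a single base measure $\mu$ and rescale the fibre traces by $h(x)$; these are equivalent parametrisations of the same argument, and your version is in fact more explicit about why the scaling step is legitimate.
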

\begin{proof} Two traces only differ by two measures $\mu_{1}$ and $\mu_{2}$ over the center $L_{\infty}(X,\mu_{1})\cong L_{\infty}(X,\mu_{2})\cong Z(\mathcal{N})$ .  Note that $\frac{d\tau_{1}}{d\tau_{2}}\geq c$ if and only $\frac{d\mu_{1}}{d\mu_{2}}\geq c$. Setting the pointwise differential form $w_{x}=(Q^{\rho_{x}}_{f^{[2]}})^{1/2}(\delta_{A}(\rho_{x}))$, we infer that 
$$cI_{A}^{f,\tau_{2}}(\rho)=c\int_{X} \tau_{2}\left(E_{\mathcal{N}}( w_{x}w_{x})\right) d\mu_{2}(x)\leq \int_{X}\tau_{1}\left(E_{\mathcal{N}}( w_{x}w_{x})\right) \mu_{1}(x)=I_{A}^{f,\tau_{1}}(\rho).$$
\end{proof}

\subsection{ Bregman relative entropy}
Let us recall the  definition of $f$-Bregman relative entropy
\[
  d^{f,\tau}(\rho\|\sigma) =    \tau(f(\rho)-f(\sigma)-(\rho-\sigma)f'(\sigma)),
\]
for $\rho,\sigma\in\mathcal{N}_{+}$.  For simplicity, we would call it as $f$-relative entropy. Equivalently $$d^{f}(\rho\|\sigma)=\lim_{\epsilon\rightarrow 0^{+}}d^{f}(\rho\|\sigma+\epsilon 1).$$ 
We write $d^{f}(\rho\|\sigma)$ if the trace $\tau$ is clear from the context. For a comprehensive study of Bregman relative entropy, see \cite{mpv16, pv15, vir16}.
 It follows from the definition that $d^{f}(\rho\|\sigma)\geq 0$ with the equality if and only if $\rho=\sigma$.
Note that we identify the Lindblad relative entropy $$d^{f}(\rho\|\sigma)=\tau(\rho\ln \rho-\rho\ln \sigma-\rho+\sigma)=D_{\lind}(\rho\|\sigma)$$  with the choice $f(x)=x\ln(x)$. The $f$-relative entropy admits an integral representation (\cite{pv15})
\begin{align*}
d^{f}(\rho\|\sigma)=\int_{s=0}^{1}\tau\left((\rho-\sigma) \frac{d}{dt}f(\sigma+(s+t)(\rho-\sigma))|_{t=0}\right) ds.
\end{align*}
Let $\mathcal{K}\subset \mathcal{N}$ be a von Neumann subalgebra of $\mathcal{N}$ and $E_{\mathcal{K}}$ be the conditional expectation onto $\mathcal{K}$. The relative entropy with respect to $\mathcal{K}$ is defined by 
$$d^{f}_{\mathcal{K}}(\rho)=d^{f}(\rho\|E_{\mathcal{K}}(\rho)).$$
Noting $\tau\left ((\rho-E_{\mathcal{K}}(\rho))f'(E_{\mathcal{K}}(\rho)) \right)=0$, then 
\begin{align}d^{f}_{\mathcal{K}}(\rho)=\tau(f(\rho)-f(E_{\mathcal{K}}(\rho))).\end{align}
\begin{lemma}\label{nme}
The following equality remains true 
$$d^{f}(\rho\|\sigma)=d^{f}_{\mathcal{K}}(\rho)+d^{f}(E_{\mathcal{K}}(\rho)\|\sigma)$$
for any $\sigma\in\mathcal{K}.$ 
\end{lemma}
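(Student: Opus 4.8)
The plan is to prove the claimed ``chain rule'' for the $f$-Bregman relative entropy by a direct computation starting from the definition, using the defining property of the conditional expectation $E_{\mathcal{K}}$ together with the hypothesis $\sigma\in\mathcal{K}$. First I would expand all three terms as traces: by definition $d^{f}(\rho\|\sigma)=\tau(f(\rho)-f(\sigma)-(\rho-\sigma)f'(\sigma))$, and, using the simplification recorded just above the statement, $d^{f}_{\mathcal{K}}(\rho)=\tau(f(\rho)-f(E_{\mathcal{K}}(\rho)))$, while $d^{f}(E_{\mathcal{K}}(\rho)\|\sigma)=\tau(f(E_{\mathcal{K}}(\rho))-f(\sigma)-(E_{\mathcal{K}}(\rho)-\sigma)f'(\sigma))$. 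Adding the last two, the terms $\tau(f(E_{\mathcal{K}}(\rho)))$ cancel, and we are left with
\[
d^{f}_{\mathcal{K}}(\rho)+d^{f}(E_{\mathcal{K}}(\rho)\|\sigma)=\tau\bigl(f(\rho)-f(\sigma)-(E_{\mathcal{K}}(\rho)-\sigma)f'(\sigma)\bigr).
\]
So the identity reduces to showing $\tau\bigl((\rho-\sigma)f'(\sigma)\bigr)=\tau\bigl((E_{\mathcal{K}}(\rho)-\sigma)f'(\sigma)\bigr)$, i.e. $\tau\bigl((\rho-E_{\mathcal{K}}(\rho))f'(\sigma)\bigr)=0$.

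The second step is to justify that last vanishing. Since $\sigma\in\mathcal{K}$ and $f'$ is a Borel function on $\mathbb{R}^{+}$, we have $f'(\sigma)\in\mathcal{K}$ (the subalgebra $\mathcal{K}$ is closed under bounded Borel functional calculus of its self-adjoint elements; here one uses $f'(\sigma)=\lim_{\epsilon\to 0^{+}}f'(\sigma+\epsilon 1)$ exactly as in the remark $d^{f}(\rho\|\sigma)=\lim_{\epsilon\to 0^{+}}d^{f}(\rho\|\sigma+\epsilon 1)$ to handle any unboundedness of $f'$ near $0$). Then the module/trace property of the conditional expectation gives $\tau\bigl((\rho-E_{\mathcal{K}}(\rho))f'(\sigma)\bigr)=\tau\bigl(E_{\mathcal{K}}\bigl((\rho-E_{\mathcal{K}}(\rho))f'(\sigma)\bigr)\bigr)=\tau\bigl(E_{\mathcal{K}}(\rho-E_{\mathcal{K}}(\rho))f'(\sigma)\bigr)=\tau\bigl((E_{\mathcal{K}}(\rho)-E_{\mathcal{K}}(\rho))f'(\sigma)\bigr)=0$, using $\tau\circ E_{\mathcal{K}}=\tau$, the $\mathcal{K}$-bimodule property $E_{\mathcal{K}}(xk)=E_{\mathcal{K}}(x)k$ for $k\in\mathcal{K}$, and idempotency $E_{\mathcal{K}}^{2}=E_{\mathcal{K}}$. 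This is essentially the same observation already invoked in the line ``Noting $\tau\bigl((\rho-E_{\mathcal{K}}(\rho))f'(E_{\mathcal{K}}(\rho))\bigr)=0$'' just before the lemma, now with $E_{\mathcal{K}}(\rho)$ replaced by the general element $\sigma\in\mathcal{K}$.

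The only genuine technical point — and the one I would be most careful about — is the integrability/domain issue: $f'$ may blow up at $0$, so $f'(\sigma)$ need not be bounded, and one must make sure all the traces above are finite and the manipulations legitimate. The clean way around this is to prove the identity first for $\sigma$ replaced by $\sigma+\epsilon 1$ (still in $\mathcal{K}$, and now $f'(\sigma+\epsilon 1)$ is bounded since $f'$ is continuous on $[\epsilon,\infty)$ and $\sigma+\epsilon 1\geq \epsilon 1$), where every term is manifestly finite and the cyclicity of the trace plus the bimodule property of $E_{\mathcal{K}}$ apply without fuss, and then let $\epsilon\to 0^{+}$, invoking the limiting formulas $d^{f}(\rho\|\sigma)=\lim_{\epsilon\to 0^{+}}d^{f}(\rho\|\sigma+\epsilon 1)$ and the analogous ones for $d^{f}_{\mathcal{K}}$ and $d^{f}(E_{\mathcal{K}}(\rho)\|\cdot)$ recorded earlier. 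No convexity or operator-monotonicity of $f$ is needed for this lemma; it is purely the linear/affine structure of the Bregman divergence in its second slot combined with the trace-preserving bimodule property of $E_{\mathcal{K}}$.
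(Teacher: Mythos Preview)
Your proof is correct and follows essentially the same approach as the paper: both reduce the identity to the observation $\tau\bigl((\rho-E_{\mathcal{K}}(\rho))f'(\sigma)\bigr)=0$ (equivalently $\tau(\rho f'(\sigma))=\tau(E_{\mathcal{K}}(\rho)f'(\sigma))$), which holds because $\sigma\in\mathcal{K}$ forces $f'(\sigma)\in\mathcal{K}$ and the conditional expectation is trace-preserving and $\mathcal{K}$-bimodular. Your write-up is in fact more careful than the paper's in spelling out why $f'(\sigma)\in\mathcal{K}$ and in handling the $\epsilon$-regularization for the possible unboundedness of $f'$ near $0$.
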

\begin{proof} Note $\tau(\rho f'(\sigma))
=\tau\left(E_{\mathcal{K}}(\rho)f'(\sigma)\right)$. Then we have 
\begin{align*}
d^{f}(\rho\|\sigma)&=\tr\left(f(\rho)-f(E_{\mathcal{K}}(\rho))+f(E_{\mathcal{K}}(\rho))-f(\sigma)+(\rho-\sigma)f'(\sigma)\right)\\
&=d^{f}_{\mathcal{K}}(\rho)+\tau\left(f(E_{\mathcal{K}}(\rho))-f(\sigma)+(E_{\mathcal{K}}(\rho)-\sigma)f'(\sigma)\right)\\
&=d_{\mathcal{K}}^{f}(\rho)+d^{f}(E_{\mathcal{K}}(\rho)\|\sigma).
\end{align*}
\end{proof}
\noindent Together with nonnegativity of $f$-relative entropy, Lemma \ref{nme} implies that 
\begin{align}\label{dcs}d^{f}_{\mathcal{K}}(\rho)=\inf_{\sigma\in \mathcal{K}}d^{f}(\rho\|\sigma).
\end{align}
Let $\mathcal{N}_{\fix}\subset \mathcal{N}$ be the fixed point algebra of the semigroup $T_{t}=e^{-tA}$ and $E$ be the conditional expectation onto $\mathcal{N}_{\fix}$,  then 
$ET_{t}=T_{t}E=E.$ 
\begin{lemma}[gradient form] \label{gf}  The semigroup $T_{t}$ relates the $f$-relative entropy and the $f$-Fisher information .The $f$-Fisher information is the negative derivative of $d_{\mathcal{N}_{\fix}}^{f}(T_{t}(\rho))$,
$$\frac{d}{dt}d^{f}_{\mathcal{N}_{\fix}}(T_{t}(\rho))=-I_{A}^{f}(T_{t}(\rho)).$$
\end{lemma}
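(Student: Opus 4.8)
The plan is to differentiate the relative entropy directly, exploiting that the conditional expectation $E$ onto $\mathcal{N}_{\fix}$ commutes with the semigroup. First I would note that $ET_{t}=T_{t}E=E$ forces $E(T_{t}(\rho))=E(\rho)$ for all $t$, so in the reformulation $d^{f}_{\mathcal{N}_{\fix}}(\eta)=\tau\big(f(\eta)-f(E(\eta))\big)$ the second term is constant in $t$; hence
\[
\frac{d}{dt}d^{f}_{\mathcal{N}_{\fix}}(T_{t}(\rho))=\frac{d}{dt}\,\tau\big(f(T_{t}(\rho))\big),
\]
and it remains only to compute this last derivative and identify it with $-I^{f}_{A}(T_{t}(\rho))$.

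Write $\rho_{t}=T_{t}(\rho)$, so that $\frac{d}{dt}\rho_{t}=-A(\rho_{t})$, with $\rho_{t}\in\dom(A)$ for $t>0$ by the smoothing of the semigroup. The key input will be the chain rule for a trace functional along a $C^{1}$ path of positive operators, namely $\frac{d}{dt}f(\rho_{t})=Q^{\rho_{t}}_{f^{[1]}}\big(\frac{d}{dt}\rho_{t}\big)$, which is the differentiated form of the functional calculus \eqref{fcd}. Taking the trace and using cyclicity together with the orthogonality of the spectral increments $dE_{\rho_{t}}(s)\,dE_{\rho_{t}}(u)$ off the diagonal, the double operator integral collapses to its diagonal, where $f^{[1]}(s,s)=f'(s)$; this gives $\frac{d}{dt}\tau(f(\rho_{t}))=\tau\big(f'(\rho_{t})\frac{d}{dt}\rho_{t}\big)=-\tau\big(f'(\rho_{t})A(\rho_{t})\big)$, which is exactly $-I^{f}_{A}(\rho_{t})$ by definition. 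If one prefers to avoid the double operator integral, the identity $\frac{d}{dt}\tau(f(\rho_{t}))=\tau(f'(\rho_{t})\dot\rho_{t})$ can be proved first for polynomial $f$ from cyclicity of the trace and then extended by approximation on a bounded region containing the spectra of all $\rho_{t}$.

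Because $f'$ need not be bounded near $0$, I would run this computation with $\rho_{t}$ replaced by $\rho_{t}+\varepsilon1=T_{t}(\rho+\varepsilon1)$ (using unitality $T_{t}(1)=1$), for which $f'(\rho_{t}+\varepsilon1)\in\mathcal{N}$ and every expression is finite, and then let $\varepsilon\to0^{+}$, invoking the limiting definitions $d^{f}(\rho\|\sigma)=\lim_{\varepsilon\to0^{+}}d^{f}(\rho\|\sigma+\varepsilon1)$ and $I^{f,\tau}_{A}(\rho)=\lim_{\varepsilon\to0^{+}}\tau\big(A(\rho)f'(\rho+\varepsilon1)\big)$; to pass the limit through $\frac{d}{dt}$ I would check that the $\varepsilon$-convergence is uniform for $t$ in compact intervals.

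The hard part will be exactly this analytic bookkeeping in the von Neumann algebra setting: placing $\rho_{t}$ in $\dom(A)$ via the semigroup smoothing (and handling $t=0$ under the domain hypotheses built into the definition of $I^{f}_{A}$), justifying the Daletskii--Krein/double operator integral differentiation formula for $f$ applied to a path of positive operators, and controlling the singularity of $f'$ at $0$ through the $\varepsilon$-regularization together with a dominated-convergence type argument to interchange the $\varepsilon$-limit with the $t$-derivative. The algebraic heart of the statement — that $\frac{d}{dt}\tau(f(\rho_{t}))=-\tau(f'(\rho_{t})A(\rho_{t}))$ — is straightforward once these points are in place.
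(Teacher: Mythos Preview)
Your proposal is correct and follows essentially the same approach as the paper: use $E(T_t(\rho))=E(\rho)$ to reduce to differentiating $\tau(f(\rho_t))$, then apply the chain rule $\frac{d}{dt}\tau(f(\rho_t))=\tau(f'(\rho_t)\dot\rho_t)=-\tau(f'(\rho_t)A(\rho_t))=-I^{f}_{A}(\rho_t)$. The paper's own proof is a single line that simply invokes ``the chain rule'' without any of the analytic bookkeeping you carefully outline (double operator integral justification, $\varepsilon$-regularization, semigroup smoothing), so if anything you are supplying the details the paper omits.
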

\begin{proof} Let $g(t)=d^{f}_{\mathcal{N}_{\fix}}(\rho_{t})=\tau(f(\rho_{t})-f(E(\rho_{t})))$. By the chain rule, we obtain that 
\begin{align*}
g'(t)=\frac{d}{dt} \tau(f(\rho_{t})-f(E(\rho))=\tau\left(-A(T_{t}\small(\rho\small))) f'(T_{t}(\rho))\right)=-I_{A}^{f}(T_{t}(\rho)).
\end{align*}
\end{proof}

\begin{lemma} \label{fdp} Let $\tau_{1}$ and $\tau_{2}$ be two normal faithful traces over a finite von Neumann algebra $\mathcal{N}$ such that $\frac{d\tau_{1}}{d\tau_{2}}\leq c$ for $c>0$. For any $\rho,\sigma \in \mathcal{N}_{+}$,
$$d^{f,\tau_{1}}(\rho\|\sigma)\leq cd^{f,\tau_{2}}(\rho\|\sigma).$$
In particular, we have $d^{f,\tau_{1}}_{\mathcal{K}}(\rho) \leq c d^{f,\tau_{2}}_{\mathcal{K}}(\rho).$
\end{lemma}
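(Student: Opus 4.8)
The plan is to reduce the statement about traces to a statement about measures on the center, exactly as in the proofs of Lemma \ref{fip} and Lemma \ref{fdp}'s companion results. Recall that any two normal faithful traces $\tau_1,\tau_2$ on $\mathcal{N}$ differ only through two measures $\mu_1,\mu_2$ on the center $Z(\mathcal{N})\cong L_\infty(X,\mu_1)\cong L_\infty(X,\mu_2)$, so that writing $\mathcal{N}=\int_X \mathcal{N}_x\,d\mu(x)$ we have $d^{f,\tau_i}(\rho\|\sigma)=\int_X d^{f,\tau_{i,x}}(\rho_x\|\sigma_x)\,d\mu_i(x)$, where $\tau_{1,x}=\tau_{2,x}$ on each factor $\mathcal{N}_x$ (the factor-level traces coincide; only the weights over $X$ change). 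The hypothesis $\tfrac{d\tau_1}{d\tau_2}\le c$ is equivalent to $\tfrac{d\mu_1}{d\mu_2}\le c$ pointwise on $X$.

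Granting that, the proof is a one-line estimate: since the integrand $d^{f,\tau_{2,x}}(\rho_x\|\sigma_x)$ is nonnegative for every $x$ (this is the nonnegativity of the $f$-relative entropy, already noted after its definition), we get
\begin{align*}
d^{f,\tau_1}(\rho\|\sigma)=\int_X d^{f,\tau_{2,x}}(\rho_x\|\sigma_x)\,d\mu_1(x)\le c\int_X d^{f,\tau_{2,x}}(\rho_x\|\sigma_x)\,d\mu_2(x)=c\,d^{f,\tau_2}(\rho\|\sigma).
\end{align*}
For the "in particular" clause, apply the inequality with $\sigma=E_{\mathcal{K}}^{\tau_2}(\rho)$ and then use the variational characterization \eqref{dcs}: $d^{f,\tau_1}_{\mathcal{K}}(\rho)=\inf_{\sigma\in\mathcal{K}}d^{f,\tau_1}(\rho\|\sigma)\le d^{f,\tau_1}(\rho\|E_{\mathcal{K}}^{\tau_2}(\rho))\le c\,d^{f,\tau_2}(\rho\|E_{\mathcal{K}}^{\tau_2}(\rho))=c\,d^{f,\tau_2}_{\mathcal{K}}(\rho)$. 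Note this sidesteps the subtlety that the conditional expectation onto $\mathcal{K}$ depends on the choice of trace: we never need $E_{\mathcal{K}}^{\tau_1}=E_{\mathcal{K}}^{\tau_2}$.

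The only real obstacle is bookkeeping: making precise that the direct-integral decomposition genuinely factors the $f$-relative entropy as $\int_X d^{f,\tau_{i,x}}(\rho_x\|\sigma_x)\,d\mu_i(x)$ with a \emph{common} fiberwise trace, so that the ratio $\tfrac{d\mu_1}{d\mu_2}$ is the only place the two traces differ. This is exactly the reduction already used (without further comment) in Lemma \ref{fip}, so I would simply invoke the same decomposition and the pointwise nonnegativity of $d^{f,\tau_x}(\rho_x\|\sigma_x)$; everything else is the displayed three-term estimate above.
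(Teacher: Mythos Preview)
Your proposal is correct and follows essentially the same route as the paper: reduce to the direct-integral decomposition over the center, use that $\tfrac{d\tau_1}{d\tau_2}\le c$ is equivalent to $\tfrac{d\mu_1}{d\mu_2}\le c$, and integrate the nonnegative fiberwise $f$-relative entropy against the two measures; the ``in particular'' clause is deduced from the variational formula \eqref{dcs} exactly as you do. Your treatment is in fact slightly more careful than the paper's, since you make explicit that the fiber traces coincide and you address the potential trace-dependence of $E_{\mathcal{K}}$, which the paper leaves implicit.
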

\begin{proof} We follow the notations and idea in the proof of Lemma \ref{fip}.
Also note that $\frac{d\tau_{1}}{d\tau_{2}}\leq c$ if and only if $\frac{d\mu_{1}}{d\mu_{2}}\leq c$. Again by the non-negativity of the $f$-relative entropy, we have
\begin{align*}
d^{f,\tau_{1}}(\rho\|\sigma)= &\int_{X} d^{f,\tau_{1}}(\rho_{x}\|\sigma_{x})d\mu_{1}(x) \\
\leq &c \int_{X} d^{f,\tau_{2}}(\rho_{x}\|\sigma_{x})d\mu_{2}(x) = cd^{f,\tau_{2}}(\rho\|\sigma).
\end{align*}
The second assertion follows from \eqref{dcs}.
\end{proof}

\begin{theorem}[Data Processing Inequality]\label{dpi} Let $\Phi: \mathcal{N}\to\mathcal{N}$ a quantum channel (CPTP), then
$$d^{f}(\Phi(\rho)\|\Phi(\sigma))\leq d^{f}(\rho\|\sigma)\quad \forall \rho\in\mathcal{N}_{+}, \sigma\in\mathcal{N}_{\fix}.$$
\end{theorem}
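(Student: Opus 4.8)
The plan is to deduce the inequality from the monotonicity \eqref{tgmm} of the generalized monotone metric under CPTP maps, after representing the $f$-relative entropy as an average, along the line segment from $\sigma$ to $\rho$, of metrics of the form $\gamma^{f^{[2]}}$.

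First I would reduce to the case of invertible operators by the standard regularization that replaces $\rho$ by $\rho+\epsilon 1$ and $\sigma$ by $\sigma+\epsilon 1$; this is compatible with $\Phi$ because $\Phi(1)=1$, and it guarantees that the segment $\rho_{s}:=(1-s)\sigma+s\rho$ and its image $\Phi(\rho_{s})=(1-s)\Phi(\sigma)+s\Phi(\rho)$ both stay $\geq\epsilon 1$, so every double operator integral appearing below is bounded; one recovers the original statement by letting $\epsilon\to 0^{+}$, using $d^{f}(\rho\|\sigma)=\lim_{\epsilon\to 0^{+}}d^{f}(\rho+\epsilon 1\|\sigma+\epsilon 1)$ (as in the regularized definition of $d^{f}$). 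For invertible $\rho,\sigma$, put $h(s)=\tau(f(\rho_{s}))$ on $[0,1]$. Differentiating the operator function $s\mapsto f(\rho_{s})$ (Daleckii--Krein) and passing through the trace, one finds $h'(s)=\tau\!\big((\rho-\sigma)f'(\rho_{s})\big)$ and then, since $f^{[2]}$ is precisely the divided difference of $f'$,
$$h''(s)=\tau\!\big((\rho-\sigma)\,Q^{\rho_{s}}_{f^{[2]}}(\rho-\sigma)\big)=\gamma^{f^{[2]}}_{\rho_{s},\rho_{s}}(\rho-\sigma,\rho-\sigma).$$
Taylor's formula with integral remainder then gives the representation
$$d^{f}(\rho\|\sigma)=h(1)-h(0)-h'(0)=\int_{0}^{1}(1-s)\,\gamma^{f^{[2]}}_{\rho_{s},\rho_{s}}(\rho-\sigma,\rho-\sigma)\,ds,$$
the operator analogue of the identity ``Bregman divergence $=$ integral of the Hessian''; compare the integral representation recalled above.

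Next I would apply this representation to the pair $(\Phi(\rho),\Phi(\sigma))$. Linearity of $\Phi$ makes $s\mapsto\Phi(\rho_{s})$ the segment from $\Phi(\sigma)$ to $\Phi(\rho)$, with increment $\Phi(\rho)-\Phi(\sigma)=\Phi(\rho-\sigma)$, so
$$d^{f}(\Phi(\rho)\|\Phi(\sigma))=\int_{0}^{1}(1-s)\,\gamma^{f^{[2]}}_{\Phi(\rho_{s}),\Phi(\rho_{s})}\!\big(\Phi(\rho-\sigma),\Phi(\rho-\sigma)\big)\,ds.$$
Since $f^{[2]}\in\mathfrak{C}^{+}$ and $\Phi$ is CPTP, \eqref{tgmm} bounds the integrand pointwise in $s$ by $\gamma^{f^{[2]}}_{\rho_{s},\rho_{s}}(\rho-\sigma,\rho-\sigma)$, and as $1-s\geq 0$ on $[0,1]$ the integral inequality follows, i.e.\ $d^{f}(\Phi(\rho)\|\Phi(\sigma))\leq d^{f}(\rho\|\sigma)$; then $\epsilon\to 0^{+}$ concludes. (Notice the argument never uses $\sigma\in\mathcal{N}_{\fix}$, so the inequality in fact holds for arbitrary $\rho,\sigma\in\mathcal{N}_{+}$.)

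The step I expect to be the real obstacle is the rigorous justification of the integral representation, after which the inequality is essentially automatic: one has to differentiate $s\mapsto\tau(f(\rho_{s}))$ twice, legitimately move the derivatives past the trace and the double operator integral, and identify $h''(s)$ with $\gamma^{f^{[2]}}_{\rho_{s},\rho_{s}}$. This needs enough regularity of $f$ for the Daleckii--Krein formula applied to $f'$ (for instance $f\in C^{2}$ on the relevant spectra, or an approximation of a merely $C^{1}$ convex $f$ by smooth convex functions together with continuity of both sides in $f$), as well as the uniform invertibility along the whole segment that the regularization step provides.
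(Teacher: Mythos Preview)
Your proposal is correct and follows essentially the same route as the paper: both express $d^{f}(\rho\|\sigma)$ as $\int_{0}^{1}(1-s)\,\gamma^{f^{[2]}}_{\rho_{s},\rho_{s}}(\rho-\sigma,\rho-\sigma)\,ds$ via Taylor's formula (the paper phrases it as integration by parts on $H_{\rho,\sigma}(t)=\tau(f((1-t)\sigma+t\rho))$), identify the integrand as a generalized monotone metric, and then apply $f^{[2]}\in\mathfrak{C}^{+}$ pointwise. Your remark that $\sigma\in\mathcal{N}_{\fix}$ is never used is also correct---the paper's proof does not use it either. One small caveat: CPTP alone does not force $\Phi(1)=1$, so the line ``$\Phi(\rho+\epsilon 1)=\Phi(\rho)+\epsilon 1$'' need not hold verbatim; this is harmless for the argument, since $\Phi(\rho_{s})\in\mathcal{N}_{+}$ regardless and one may regularize the image side separately.
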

\begin{proof}
Let $a(t)=(1-t)\sigma+t\rho$ for $t\in[0,1]$, and we consider the function 
$$H_{\rho,\sigma}(t)=\tau(f(a(t))).$$
By chain rule $H'(t)=\tau(f'(a(t))(\rho-\sigma))$. It follows from the integration by parts that
\begin{align} \label{di}
\int_{0}^{1}(1-t)H''_{\rho,\sigma}(t)dt=&(1-t) H''_{\rho,\sigma}(t)|_{0}^{1}-\int_{0}^{1} (-1)H'_{\rho,\sigma}(t)dt\\
=&-H'_{\rho,\sigma}(0)+H_{\rho,\sigma}(1)-H_{\rho,\sigma}(0)=d^{f}(\rho\|\sigma).
\end{align}
Recall that $\lim_{t\to 0^{+}}\frac{g(\rho+t\sigma)-g(\rho)}{t}=Q_{g^{[1]}}^{\rho}(\sigma)$, then we have 
\begin{align*}
H''_{\rho,\sigma}(t)=&\tau\left(\lim_{\epsilon\to 0^{+}} \frac{f'(a(t+\epsilon))-f'(a(t)) }{\epsilon} (\rho-\sigma) \right)\\
=&\tau\left(\lim_{\epsilon \to 0^{+}} \frac{f'(a(t)+\epsilon(\rho-\sigma))-f'(a(t))}{\epsilon} (\rho-\sigma) \right)\\
=& \tau\left((\rho-\sigma) Q^{a(t)}_{f^{[2]}} (\rho-\sigma) \right)=\gamma_{a(t),a(t)}^{f^{[2]}}(\rho-\sigma,\rho-\sigma).
\end{align*}
Then $f^{[2]}\in\mathfrak{C}^{+}$ implies that 
$$H''_{\Phi(\rho),\Phi(\sigma)}(t)\leq H''_{\rho,\sigma}(t).$$
Together with \eqref{di} it yields the assertion.
\end{proof}
\noindent Let $f(x)=x^{p}$ for $p\in(1,2)$. We obtain the $p$-relative entropy
\begin{align}\label{dp}d^{p}(\rho\|\sigma)=\tau(\rho^{p}-\sigma^{p}-p(\rho-\sigma)\sigma^{p-1}).
\end{align}
Thus\begin{align}\label{dpc}d^{p}_{\mathcal{K}}(\rho)=\tau(\rho^{p}-(E_{\mathcal{K}}(\rho))^{p}).\end{align}

It shall be noted that $p$-relative entropy is different from the (sandwiched) R\'enyi entropy.  The $p$-relative entropy with respect the conditional expectation \eqref{dpc} appeared in \cite{bt06}, where they studied the classical (commutative) situations and ergodic systems. The general properties of Bregman relative entropy are systematically studied in 
\cite{pv15, vir16, mpv16}.
 \noindent Again we use the standard argument as in Theorem \ref{tgmc} and obtain the joint convexity. 
\begin{corollary} 
The $f$-relative entropy $d^{p}(\rho\|\sigma)$ is a jointly convex function for $(\rho,\sigma)$ for $\rho,\sigma\in\mathcal{N}_{+}$ if $d^{f}(\lambda \rho\|\lambda \sigma)\leq \lambda d^{f}(\rho\|\sigma)$ for any $\lambda \in[0,1]$. Thus $D_{\lind}$ and $d^{p}$ are jointly convex.
\end{corollary}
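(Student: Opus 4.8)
The plan is to derive the joint convexity of $d^f(\rho\|\sigma)$ from the monotonicity estimate \eqref{tgmm} (equivalently the Data Processing Inequality, Theorem~\ref{dpi}), using the same $2\times 2$ amplification trick that proved Theorem~\ref{tgmc}. First I would set up the CPTP map $\beta: \Mz_2\otimes\mathcal{N}\to\mathcal{N}$ sending $\left(\begin{smallmatrix} x_1&x_2\\ x_3&x_4\end{smallmatrix}\right)\mapsto x_1+x_4$, which is CPTP, and apply the Data Processing Inequality (Theorem~\ref{dpi}) to the block-diagonal states $\rho=\left(\begin{smallmatrix}\lambda\rho_1&0\\0&(1-\lambda)\rho_2\end{smallmatrix}\right)$ and $\sigma=\left(\begin{smallmatrix}\lambda\sigma_1&0\\0&(1-\lambda)\sigma_2\end{smallmatrix}\right)$, where $\sigma$ must lie in the fixed-point algebra of the relevant semigroup. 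Since $d^f$ is additive over direct sums, $d^f(\rho\|\sigma)$ on the amplified algebra equals $d^{f,\tau}(\lambda\rho_1\|\lambda\sigma_1)+d^{f,\tau}((1-\lambda)\rho_2\|(1-\lambda)\sigma_2)$, while $\beta(\rho)=\lambda\rho_1+(1-\lambda)\rho_2$ and $\beta(\sigma)=\lambda\sigma_1+(1-\lambda)\sigma_2$. The Data Processing Inequality then gives
\[
d^f\big(\lambda\rho_1+(1-\lambda)\rho_2\,\big\|\,\lambda\sigma_1+(1-\lambda)\sigma_2\big)\leq d^f(\lambda\rho_1\|\lambda\sigma_1)+d^f((1-\lambda)\rho_2\|(1-\lambda)\sigma_2).
\]

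Next I would invoke the hypothesis $d^f(\lambda\rho\|\lambda\sigma)\leq \lambda\, d^f(\rho\|\sigma)$ for $\lambda\in[0,1]$ to bound each term on the right by $\lambda\, d^f(\rho_1\|\sigma_1)$ and $(1-\lambda)\,d^f(\rho_2\|\sigma_2)$ respectively, which yields exactly the joint convexity inequality
\[
d^f\big(\lambda\rho_1+(1-\lambda)\rho_2\,\big\|\,\lambda\sigma_1+(1-\lambda)\sigma_2\big)\leq \lambda\, d^f(\rho_1\|\sigma_1)+(1-\lambda)\,d^f(\rho_2\|\sigma_2).
\]
For the two named examples: when $f(x)=x\ln x$ one checks $d^f(\lambda\rho\|\lambda\sigma)=\tau(\lambda\rho\ln(\lambda\rho)-\lambda\rho\ln(\lambda\sigma)-\lambda\rho+\lambda\sigma)=\lambda\, d^f(\rho\|\sigma)$ by direct computation (the $\ln\lambda$ terms cancel), so $D_{\lind}$ is jointly convex. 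When $f(x)=x^p$ one computes from \eqref{dp} that $d^p(\lambda\rho\|\lambda\sigma)=\lambda^p\, d^p(\rho\|\sigma)\leq \lambda\, d^p(\rho\|\sigma)$ since $p>1$ and $\lambda\in[0,1]$; hence $d^p$ is jointly convex.

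The main subtlety — and the point I would be careful about — is the role of the fixed-point constraint $\sigma\in\mathcal{N}_{\fix}$ in Theorem~\ref{dpi}: one must ensure that the block-diagonal $\sigma$ lies in the fixed-point algebra of a semigroup for which $\beta$ is the relevant channel, or else argue that the Data Processing Inequality holds for this particular finite-dimensional conditional-expectation channel $\beta$ without that hypothesis (which it does, since $\beta$ is a genuine CPTP map and the DPI proof via $f^{[2]}\in\mathfrak{C}^+$ and \eqref{di} only uses that $\Phi$ is CPTP together with convexity of $f$ — the fixed-point hypothesis in the statement of Theorem~\ref{dpi} is needed only to make $d^f_{\mathcal{N}_{\fix}}$ meaningful, not for the DPI bound itself). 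I expect this bookkeeping to be the only real obstacle; everything else is the routine $2\times2$-dilation argument already rehearsed in the proof of Theorem~\ref{tgmc}, combined with the elementary scaling identities for $x\ln x$ and $x^p$.
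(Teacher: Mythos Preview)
Your proposal is correct and follows essentially the same route as the paper: the corollary is deduced by the $2\times2$-amplification trick from Theorem~\ref{tgmc}, applying the Data Processing Inequality (Theorem~\ref{dpi}) to the block-diagonal pair and then invoking the scaling hypothesis, with the explicit verifications for $f(x)=x\ln x$ and $f(x)=x^p$. Your remark about the fixed-point constraint in Theorem~\ref{dpi} is also well taken: the proof of that theorem uses only that $\Phi$ is CPTP and $f^{[2]}\in\mathfrak{C}^{+}$, so the hypothesis $\sigma\in\mathcal{N}_{\fix}$ plays no role and the DPI applies to the partial-trace channel $\beta$ without further justification.
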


\subsection{$\cfsi$}
\begin{definition} The semigroup $T_{t}=e^{-tA}$ or the generator $A$ with the fixed-point algebra $\mathcal{N}_{\fix}$ is said to satisfy:
\begin{itemize}[leftmargin=6mm]
\item[{\rm{(1)}}] the modified $f$-Sobolev inequality $\lambda$-$\mfsi$ {\rm{(}}with respect to the trace $\tau${\rm{)}} if there exists a constant $\lambda>0$ such that
$$\lambda d_{\mathcal{N}_{\fix}}^{f}(\rho)\leq  I_{A}^{f}(\rho),\quad \forall \rho\in \dom(\delta)\cap \mathcal{N}_{+};$$
{\rm{(}}or equivalently $d_{\mathcal{N}_{\fix}}^{f}(T_{t}(\rho))\leq e^{-\lambda t} d^{f}_{\mathcal{N}_{\fix}}(\rho),\: \forall \rho\in \mathcal{N}_{+}.${\rm{)}}
\item[{\rm{(2)}}] the complete $f$-Sobolev inequality $\lambda$-$\mfsi$ {\rm{(}}with respect to the trace $\tau${\rm{)}} if $A \otimes id_{\mathcal{F}}$ satisfies $\lambda$-$\mfsi$ for any finite von Neumann algebra $\mathcal{F}$.
\end{itemize}
Let $\cfsi(A,\tau)$ be the supremum of $\lambda$ such that $A$ satisfies $\lambda$-$\cfsi$, or denoted by $\cfsi(A)$ if there is no ambiguity.  Sometimes we use $\cfsi(T_{t})$ for convenience. 
\end{definition}
\noindent $\cfsi$ is a generalization of the complete logarithmic Sobolev inequality. An important example is $f(x)=x^{p}$ for $p\in(1,2)$, which induces the so-called $\cpsi$ and $\clsi^{+}$
\cite{LJL}.
\begin{lemma}  \label{ddi} Let $E_{\mathcal{K}}:\mathcal{N}\to \mathcal{K}$ be a conditional expectation. Then 
\begin{align*} 
I_{I-E_{\mathcal{K}}}^{f}(\rho) =d^{f}(\rho\| E_{\mathcal{K}}(\rho))+d^{f}(E_{\mathcal{K}}(\rho)\|\rho)
\end{align*}
and hence $\cfsi(I-E_{\mathcal{K}})\geq 1.$ 
\end{lemma}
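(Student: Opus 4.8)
The plan is to compute $I^{f}_{I-E_{\mathcal{K}}}(\rho)$ directly from the definition of the $f$-Fisher information for a generator, using $A = I - E_{\mathcal{K}}$, and then recognize the answer as the sum of two Bregman relative entropies. The key observation is that $E_{\mathcal{K}}$ is itself a CPTP self-adjoint idempotent, so $A = I - E_{\mathcal{K}}$ is a legitimate generator of a semigroup (indeed $T_t = e^{-t}\,\mathrm{id} + (1-e^{-t})E_{\mathcal{K}}$), its fixed-point algebra is $\mathcal{K}$, and the conditional expectation onto the fixed-point algebra is $E_{\mathcal{K}}$ itself. So the left-hand side is well defined and $d^{f}_{\mathcal{N}_{\fix}} = d^{f}_{\mathcal{K}}$.

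First I would use the trace-form expression $I^{f,\tau}_{A}(\rho) = \tau(A(\rho)f'(\rho))$, which for $A = I - E_{\mathcal{K}}$ gives
\[
I^{f}_{I-E_{\mathcal{K}}}(\rho) = \tau\big((\rho - E_{\mathcal{K}}(\rho))\,f'(\rho)\big).
\]
Next I would expand the target right-hand side. Using the identity $\tau\big((\rho - E_{\mathcal{K}}(\rho))f'(E_{\mathcal{K}}(\rho))\big) = 0$ (since $f'(E_{\mathcal{K}}(\rho)) \in \mathcal{K}$ and $E_{\mathcal{K}}$ is trace-preserving), one has $d^{f}(\rho\|E_{\mathcal{K}}(\rho)) = \tau\big(f(\rho) - f(E_{\mathcal{K}}(\rho))\big)$, exactly as recorded after Lemma~\ref{nme}. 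For the second term, writing $\sigma = E_{\mathcal{K}}(\rho)$,
\[
d^{f}(\sigma\|\rho) = \tau\big(f(\sigma) - f(\rho) - (\sigma - \rho)f'(\rho)\big) = \tau\big(f(\sigma) - f(\rho)\big) + \tau\big((\rho - E_{\mathcal{K}}(\rho))f'(\rho)\big).
\]
Adding the two displays, the $\tau(f(\rho) - f(\sigma))$ and $\tau(f(\sigma) - f(\rho))$ cancel, leaving precisely $\tau\big((\rho - E_{\mathcal{K}}(\rho))f'(\rho)\big)$, which matches $I^{f}_{I-E_{\mathcal{K}}}(\rho)$. This establishes the identity.

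For the inequality $\cfsi(I - E_{\mathcal{K}}) \geq 1$: the identity just proved gives, at the level of $A \otimes \id_{\mathcal{F}}$ for any finite von Neumann algebra $\mathcal{F}$ (noting $(I-E_{\mathcal{K}})\otimes \id_{\mathcal{F}} = I - E_{\mathcal{K}\otimes\mathcal{F}}$ is of the same form), that
\[
I^{f}_{I-E_{\mathcal{K}}}(\rho) = d^{f}_{\mathcal{K}}(\rho) + d^{f}(E_{\mathcal{K}}(\rho)\|\rho) \geq d^{f}_{\mathcal{K}}(\rho),
\]
since $d^{f}(E_{\mathcal{K}}(\rho)\|\rho) \geq 0$ by non-negativity of the $f$-relative entropy. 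This is exactly the $1$-$\mfsi$ inequality for $A\otimes\id_{\mathcal{F}}$, hence $1$-$\cfsi$, so $\cfsi(I-E_{\mathcal{K}}) \geq 1$.

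The only genuinely delicate point — the main obstacle — is the justification of the formal manipulations with $f'(\rho)$ when $\rho$ is not invertible, and more subtly the passage between the trace-form Fisher information $\tau(A(\rho)f'(\rho))$ and the derivation-form $\tau(\delta(\rho)Q^{\rho}_{f^{[2]}}\delta(\rho))$ used elsewhere in the paper. I would handle this by the same $\rho + \epsilon 1$ perturbation and $\epsilon \to 0^+$ limit already invoked repeatedly in the excerpt (e.g. in the definitions of $I^{f}_A$ and $d^f$), so that $f'(\rho+\epsilon 1) \in L_\infty(\mathcal{N})$ and all traces are finite; the cancellations above are purely algebraic and survive the limit by continuity. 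I would also remark that no convexity or $\mathfrak{C}^{+}$ hypothesis on $f^{[2]}$ is needed here beyond what guarantees the objects are defined, since the argument is an exact identity plus one application of non-negativity.
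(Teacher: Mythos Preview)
Your proof is correct and follows essentially the same approach as the paper: both arguments amount to expanding $d^{f}(E_{\mathcal{K}}(\rho)\|\rho)$ from its definition, recognizing $\tau(f(E_{\mathcal{K}}(\rho))-f(\rho))=-d^{f}_{\mathcal{K}}(\rho)$ and $\tau((\rho-E_{\mathcal{K}}(\rho))f'(\rho))=I^{f}_{I-E_{\mathcal{K}}}(\rho)$, and then invoking non-negativity of the Bregman divergence for the $\cfsi$ bound. Your additional remarks on tensorization and the $\rho+\epsilon 1$ regularization are reasonable elaborations but not substantively different from what the paper does.
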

\begin{proof}
We have
\begin{align*}
d^{f}(E_{\mathcal{K}}(\rho)\| \rho)=&\tau \left(  f(E_{\mathcal{K}}(\rho)) -f(\rho)-(E_{\mathcal{K}}(\rho)-\rho)f'(\rho) \right)\\
=&\tau\left(f(E_{\mathcal{K}}(\rho)-f(\rho))\right)+\tau\left( (I-E_{\mathcal{K}})(\rho)f'(\rho) \right)\\
=&-d^{f}(\rho\|E_{\mathcal{K}}(\rho))+I_{I-E_{\mathcal{K}}}^{f}(\rho).
\end{align*}
Thus $\cfsi(I-E_{\mathcal{K}})$ follows from the nonnegativity of $f$-relative entropy.
\end{proof}
\noindent This result  for $\clsi$ was given by \cite{DPR17}. We can obtain a better constant for $\cpsi$, see \cite{LJL}. 
Applying Lemma \ref{gf}, we have the following equivalence:
\begin{prop} \label{equfi}
The following conditions are equivalent:
\begin{enumerate}
\item[{(\rm{1)}}] $\lambda d^{f}_{\mathcal{N}_{\fix}}{\rho}\leq I_{A}^{f}(\rho)$ for any $\rho\in\mathcal{N}_{+}$;
\item[{\rm{(2)}}] $d_{\mathcal{N}_{\fix}}^{f}(T_{t}(\rho)) \leq e^{-\lambda t}d_{\mathcal{N}_{\fix}}^{f}(\rho)$ for any $\rho\in\mathcal{N}_{+}$.
\end{enumerate}
\end{prop}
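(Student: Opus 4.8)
The plan is to prove the equivalence of (1) and (2) via the standard Gr\"onwall/entropy-production argument, using Lemma~\ref{gf} as the crucial link between the time derivative of the $f$-relative entropy and the $f$-Fisher information. First I would establish the implication (1)$\Rightarrow$(2). Fix $\rho\in\mathcal{N}_{+}$ and set $g(t)=d^{f}_{\mathcal{N}_{\fix}}(T_{t}(\rho))$. Since $T_{t}(\rho)\in\mathcal{N}_{+}$ for all $t\ge 0$ (the $T_{t}$ are positive unital), hypothesis (1) applied to $T_{t}(\rho)$ gives $\lambda\,g(t)=\lambda\,d^{f}_{\mathcal{N}_{\fix}}(T_{t}(\rho))\le I_{A}^{f}(T_{t}(\rho))$. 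By Lemma~\ref{gf}, $g'(t)=-I_{A}^{f}(T_{t}(\rho))\le -\lambda\,g(t)$, so $\frac{d}{dt}\bigl(e^{\lambda t}g(t)\bigr)=e^{\lambda t}\bigl(g'(t)+\lambda g(t)\bigr)\le 0$. Hence $e^{\lambda t}g(t)\le g(0)$, which is exactly $d^{f}_{\mathcal{N}_{\fix}}(T_{t}(\rho))\le e^{-\lambda t}d^{f}_{\mathcal{N}_{\fix}}(\rho)$, i.e. (2). One should note here that $g(t)\ge 0$ by nonnegativity of the $f$-relative entropy, and that $g$ is continuous (indeed differentiable) on $[0,\infty)$, so the Gr\"onwall step is legitimate; since $T_{t}$ leaves $\dom(\delta)$ invariant, the Fisher information stays finite along the trajectory as well.

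Next I would prove (2)$\Rightarrow$(1). Assume (2) holds for all $\rho\in\mathcal{N}_{+}$. Fix $\rho\in\dom(\delta)\cap\mathcal{N}_{+}$ and consider again $g(t)=d^{f}_{\mathcal{N}_{\fix}}(T_{t}(\rho))$. From (2) we have $g(t)\le e^{-\lambda t}g(0)$ for all $t\ge 0$, and by Lemma~\ref{nme} (applied with $\mathcal{K}=\mathcal{N}_{\fix}$, noting $E(\rho)$ is fixed) together with $ET_{t}=E$, we have $g(0)=d^{f}_{\mathcal{N}_{\fix}}(\rho)$. Now differentiate the inequality $g(t)\le e^{-\lambda t}g(0)$ at $t=0$: since $g(0)=e^{-\lambda\cdot 0}g(0)$, the function $t\mapsto e^{-\lambda t}g(0)-g(t)$ is nonnegative and vanishes at $t=0$, hence its right derivative at $0$ is $\ge 0$, giving $-\lambda g(0)-g'(0)\ge 0$, i.e. $g'(0)\le -\lambda g(0)$. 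By Lemma~\ref{gf}, $g'(0)=-I_{A}^{f}(\rho)$, so $-I_{A}^{f}(\rho)\le -\lambda\,d^{f}_{\mathcal{N}_{\fix}}(\rho)$, which is (1).

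The main obstacle, and the point I would be most careful about, is the differentiability and the precise sense in which Lemma~\ref{gf} applies at $t=0$ for $\rho$ merely in $\dom(\delta)\cap\mathcal{N}_{+}$ rather than in some smoother domain. For (1)$\Rightarrow$(2) this is not an issue since one can run the argument for $t$ in any interval where smoothness holds and then pass to the limit, or simply note that $T_{t}(\rho)\in\dom(A)$ for $t>0$ by analyticity of the semigroup on $L_2$; but for (2)$\Rightarrow$(1) one genuinely needs the one-sided derivative of $g$ at $0$ to exist and equal $-I_{A}^{f}(\rho)$, which is exactly the content of Lemma~\ref{gf} combined with the equivalent formulation $I^{f,\tau}_{A}(\rho)=\lim_{\epsilon\to 0^{+}}\tau(A(\rho)f'(\rho+\epsilon 1))$ established in the section on generalized Fisher information. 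A secondary point is that (1) is stated only for $\rho\in\dom(\delta)\cap\mathcal{N}_{+}$ while (2) is stated for all $\rho\in\mathcal{N}_{+}$; the passage from the dense set to all of $\mathcal{N}_{+}$ in (1)$\Rightarrow$(2) is handled by the fact that, for $t>0$, $T_{t}(\rho)$ already lies in the good domain, so one proves the decay estimate for $g$ on $(0,\infty)$ and then extends to $t=0$ by continuity of $d^{f}_{\mathcal{N}_{\fix}}$. I would also remark, for completeness, that this proposition is the $f$-analogue of the classical equivalence between the modified logarithmic Sobolev inequality and exponential entropy decay, and that the argument is entirely parallel to the case $f(x)=x\ln x$ treated in \cite{LJL}.
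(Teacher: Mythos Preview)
Your proposal is correct and follows exactly the approach the paper intends: the paper simply states that the equivalence follows by applying Lemma~\ref{gf}, and your Gr\"onwall argument is precisely how one makes that application explicit. Your additional care about domains and differentiability at $t=0$ goes beyond what the paper provides, but is consistent with the setup there.
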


\noindent Combining Lemma \ref{fdp} and Lemma \ref{fip}, we obtain the following \textit{change of measure principle}.
\begin{theorem}[change of measure principle]  \label{sutp} Let $\tau_{1}$ and $\tau_{2}$ be normal faithful traces over $\mathcal{N}$ and $c_{2}\leq \frac{d\tau_{1}}{d\tau_{2}}\leq c_{1}$ for some $c_{1},c_{2}>0$. Then $\cfsi(A,\tau_{1})\geq \frac{c_{2}}{c_{1}}\cfsi(A,\tau_{2})$.
\end{theorem}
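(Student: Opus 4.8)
The plan is to combine the two one-sided measure-comparison estimates established earlier: Lemma \ref{fip} for the $f$-Fisher information and Lemma \ref{fdp} for the $f$-relative entropy. The key observation is that $\cfsi(A,\tau_2)$ being positive means, by Proposition \ref{equfi} (in its integrated form, applied with $A\otimes\id_{\mathcal F}$ for an arbitrary finite von Neumann algebra $\mathcal F$), that $\lambda\, d^{f,\tau_2}_{\mathcal N_{\fix}}(\rho)\le I_A^{f,\tau_2}(\rho)$ for every $\rho\in\mathcal N_+$ and every $\lambda<\cfsi(A,\tau_2)$. I want to transfer this inequality to the trace $\tau_1$, losing only the factor $c_2/c_1$.

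First I would fix $\lambda<\cfsi(A,\tau_2)$ and a finite von Neumann algebra $\mathcal F$, and work with $A\otimes\id_{\mathcal F}$ on $\mathcal N\bar\otimes\mathcal F$ equipped with the amplified traces; since the hypothesis $c_2\le \frac{d\tau_1}{d\tau_2}\le c_1$ survives tensoring with a fixed trace on $\mathcal F$, it suffices to prove the non-amplified statement and then quote it in this amplified setting, which is exactly what passing from $\mfsi$ to $\cfsi$ requires. Second, for $\rho\in(\mathcal N\bar\otimes\mathcal F)_+$ I would chain the estimates: by Lemma \ref{fip} (note $\frac{d\tau_2}{d\tau_1}\ge 1/c_1$, so $c=1/c_1$ there) we get $\tfrac{1}{c_1} I_A^{f,\tau_1}(\rho)\le I_A^{f,\tau_2}(\rho)$; wait — more precisely Lemma \ref{fip} with the roles $\tau_1\leftrightarrow\tau_2$ and $c=1/c_1$ gives $\tfrac{1}{c_1}I_A^{f,\tau_1}(\rho)\le I_A^{f,\tau_2}(\rho)$, hence $I_A^{f,\tau_1}(\rho)\ge \tfrac{1}{c_1}\cdot(\text{nothing})$ — so instead I use it as $I_A^{f,\tau_2}(\rho)\le c_1 I_A^{f,\tau_1}(\rho)$ is the wrong direction; the correct reading is: from $\frac{d\tau_1}{d\tau_2}\ge c_2$ Lemma \ref{fip} gives $c_2 I_A^{f,\tau_2}(\rho)\le I_A^{f,\tau_1}(\rho)$. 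And from $\frac{d\tau_1}{d\tau_2}\le c_1$ Lemma \ref{fdp} gives $d^{f,\tau_1}_{\mathcal N_{\fix}}(\rho)\le c_1 d^{f,\tau_2}_{\mathcal N_{\fix}}(\rho)$.

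Putting these together: for $\lambda<\cfsi(A,\tau_2)$,
\[
\frac{\lambda c_2}{c_1}\, d^{f,\tau_1}_{\mathcal N_{\fix}}(\rho)
\le \lambda c_2\, d^{f,\tau_2}_{\mathcal N_{\fix}}(\rho)
\le c_2\, I_A^{f,\tau_2}(\rho)
\le I_A^{f,\tau_1}(\rho),
\]
where the middle inequality is the $\mfsi$ for $(A,\tau_2)$ and the outer two are Lemmas \ref{fdp} and \ref{fip}. Since $\mathcal F$ and $\lambda<\cfsi(A,\tau_2)$ were arbitrary, this shows $A$ satisfies $\frac{\lambda c_2}{c_1}$-$\cfsi$ with respect to $\tau_1$ for all such $\lambda$, hence $\cfsi(A,\tau_1)\ge\frac{c_2}{c_1}\cfsi(A,\tau_2)$, which is the claim. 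The only genuinely delicate point is the bookkeeping with the direction of the Radon--Nikodym comparison in Lemmas \ref{fip} and \ref{fdp} — one must be careful that the Fisher information comparison and the entropy comparison use the two inequalities $c_2\le\frac{d\tau_1}{d\tau_2}$ and $\frac{d\tau_1}{d\tau_2}\le c_1$ respectively, in the right orientation, so that the constants multiply to $c_2/c_1$ rather than cancel; everything else is a direct substitution, and the passage to the complete (tensorized) version is routine because the measure-comparison hypothesis is stable under amplification by a fixed trace.
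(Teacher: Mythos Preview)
Your proposal is correct and is precisely the argument the paper has in mind: the theorem is stated immediately after Lemmas \ref{fip} and \ref{fdp} with the one-line justification ``Combining Lemma \ref{fdp} and Lemma \ref{fip}'', and your chain $\frac{\lambda c_2}{c_1} d^{f,\tau_1}_{\mathcal N_{\fix}}(\rho)\le \lambda c_2\, d^{f,\tau_2}_{\mathcal N_{\fix}}(\rho)\le c_2 I_A^{f,\tau_2}(\rho)\le I_A^{f,\tau_1}(\rho)$ is exactly that combination spelled out, with the tensorization remark handling the passage from $\mfsi$ to $\cfsi$.
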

\noindent The change of measure principle is often referred to as Holley and Stroock (\cite{hs87}) argument. They proved that LSIs are stable under change of measures. This remains true for CLSIs \cite{LJL}.
\noindent $\cfsi$s are also stable under tensorization as an application of the  data processing inequality.
\begin{theorem} [tensorization stability] 
Let $T^{j}_{t}: \mathcal{N}_{j}\to \mathcal{N}_{j}$ be a family of semigroups with
fixed-point algebras $\mathcal{N}_{\fix,j}\subset \mathcal{N}_{j}$ for $1\leq j\leq k$ . Then the tensor semigroup $T_{t}=\otimes_{j=1}^{k}T_{t}^j$ has the fixed-point algebra $\mathcal{N}_{\fix}=\otimes_{j=1}^{k}\mathcal{N}_{\fix,j}$. Moreover, we have
$$\cfsi(T_{t})\geq \inf_{1\leq j\leq k} \cfsi(T^{j}_{t}).$$
\end{theorem}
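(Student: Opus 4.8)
The plan is to reduce the tensorization statement to the single-variable inequality characterization of $\cfsi$ from Proposition~\ref{equfi}, namely that $\lambda$-$\cfsi$ holds iff $\lambda\, d^{f}_{\mathcal{N}_{\fix}}(\rho)\le I^{f}_{A}(\rho)$ for all positive $\rho$ (and this inequality is stable under tensoring with an auxiliary $\mathcal{F}$, which is already built into the definition of $\cfsi$). So it suffices to treat $k=2$ (the general case follows by induction) and to prove two things: first, that the fixed-point algebra of $T_{t}=T^{1}_{t}\otimes T^{2}_{t}$ is $\mathcal{N}_{\fix,1}\otimes\mathcal{N}_{\fix,2}$, with conditional expectation $E=E_{1}\otimes E_{2}$; second, the subadditivity-type estimate
$$d^{f}_{\mathcal{N}_{\fix}}(\rho)\;\le\; \tau_{2}\!\left(d^{f}_{\mathcal{N}_{\fix,1}\otimes \id}(\rho)\right)+\tau_{1}\!\left(d^{f}_{\id\otimes\mathcal{N}_{\fix,2}}(\rho)\right)$$
paired with additivity of the Fisher information under tensor generators, $I^{f}_{A_{1}\otimes \id + \id\otimes A_{2}}(\rho)=I^{f}_{A_{1}\otimes \id}(\rho)+I^{f}_{\id\otimes A_{2}}(\rho)$.

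Concretely, the first step is the fixed-point computation: since $T^{1}_{t}\otimes T^{2}_{t}$ fixes $x$ iff (by faithfulness of $\tau$ and self-adjointness) $x$ is fixed by $\id\otimes T^{2}_{t}$ and by $T^{1}_{t}\otimes\id$ separately, we get $\mathcal{N}_{\fix}=\mathcal{N}_{\fix,1}\otimes\mathcal{N}_{\fix,2}$ and $E T_{t}=T_{t}E=E$ with $E=E_{1}\otimes E_{2}$; this is routine. The second step is to observe that the generator of $T_{t}$ is $A=A_{1}\otimes\id+\id\otimes A_{2}$, and since the weak gradient form and hence the induced derivation $\delta_{A}$ decompose as a direct sum $\delta_{A}=\delta_{A_{1}}\otimes\id \ \oplus\ \id\otimes\delta_{A_{2}}$ acting into $\Hs_{1}\otimes L_{2}(\mathcal{N}_{2})\ \oplus\ L_{2}(\mathcal{N}_{1})\otimes\Hs_{2}$, the double-operator-integral formula \eqref{fisher-d} gives $I^{f}_{A}(\rho)=I^{f}_{A_{1}\otimes\id}(\rho)+I^{f}_{\id\otimes A_{2}}(\rho)$ --- the cross terms vanish because the two pieces land in orthogonal summands of the bimodule. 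The third step is the entropy estimate: apply Lemma~\ref{nme} with the intermediate algebra $\mathcal{K}=\id\otimes\mathcal{N}_{\fix,2}\supset \mathcal{N}_{\fix}$ to write $d^{f}_{\mathcal{N}_{\fix}}(\rho)=d^{f}_{\mathcal{N}_{\fix}}(\rho\|E(\rho))$ as a sum through $E_{\mathcal{K}}(\rho)=(\id\otimes E_{2})(\rho)$, then iterate; one term is handled by the $\cfsi(T^{2}_{t})$ inequality applied fiberwise over $\mathcal{N}_{1}$ (this is exactly where completeness of $\cfsi$, i.e. the stability under tensoring with an arbitrary auxiliary algebra, is needed), the other by $\cfsi(T^{1}_{t})$ applied after pushing through $E_{2}$.

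Putting these together: with $\lambda_{j}=\cfsi(T^{j}_{t})$ and $\lambda=\min(\lambda_{1},\lambda_{2})$, the entropy splits as $d^{f}_{\mathcal{N}_{\fix}}(\rho)\le \tfrac1{\lambda_{1}}I^{f}_{A_{1}\otimes\id}(\rho)+\tfrac1{\lambda_{2}}I^{f}_{\id\otimes A_{2}}(\rho)\le \tfrac1{\lambda}\big(I^{f}_{A_{1}\otimes\id}(\rho)+I^{f}_{\id\otimes A_{2}}(\rho)\big)=\tfrac1{\lambda}I^{f}_{A}(\rho)$, which is $\lambda$-$\mfsi$ for $A$; replacing each $\mathcal{N}_{j}$ by $\mathcal{N}_{j}\otimes\mathcal{F}$ throughout gives $\lambda$-$\cfsi$, and the induction on $k$ finishes the proof.

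The main obstacle is the entropy splitting in the third step: unlike the classical case, $d^{f}_{\mathcal{N}_{\fix}}$ does not obviously telescope along the chain $\mathcal{N}_{\fix}\subset \id\otimes\mathcal{N}_{\fix,2}\subset\mathcal{N}$ unless one is careful that $\mathcal{N}_{\fix}$ really is a subalgebra of the intermediate algebra and that Lemma~\ref{nme} applies with $\sigma\in\mathcal{N}_{\fix}\subset\mathcal{K}$; the chain-rule identity $d^{f}(\rho\|\sigma)=d^{f}_{\mathcal{K}}(\rho)+d^{f}(E_{\mathcal{K}}(\rho)\|\sigma)$ is exactly what makes this work, but one must also verify that applying $\cfsi(T^{2}_{t})$ "fiberwise over $\mathcal{N}_{1}$" is legitimate --- this is precisely the content of completeness, since $\mathcal{N}_{1}$ plays the role of the arbitrary finite von Neumann algebra $\mathcal{F}$ in the definition of $\cfsi$. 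The additivity of the Fisher information (second step) is conceptually the cleanest point but does require the orthogonality of the two derivation summands inside the bimodule produced by Theorem~\ref{jsr}, which should be checked rather than asserted.
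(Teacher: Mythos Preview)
Your outline follows the same route as the paper: reduce to $k=2$, split $d^{f}_{\mathcal{N}_{\fix}}(\rho)$ through an intermediate conditional expectation via Lemma~\ref{nme}, and bound the two pieces using the $\cfsi$ of each factor (with the other tensor leg playing the role of the auxiliary algebra $\mathcal{F}$). The one genuine gap is in your handling of the second entropy term. After the chain rule, that summand is $d^{f}\big((\id\otimes E_{2})(\rho)\,\|\,E(\rho)\big)$; applying $\cfsi(T^{1}_{t})$ ``after pushing through $E_{2}$'' bounds it by $\tfrac{1}{\lambda_{1}}\,I^{f}_{A_{1}\otimes\id}\!\big((\id\otimes E_{2})(\rho)\big)$, with the Fisher information evaluated at $(\id\otimes E_{2})(\rho)$ rather than at $\rho$. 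Your ``Putting these together'' line silently replaces this by $I^{f}_{A_{1}\otimes\id}(\rho)$. The paper closes this gap on the entropy side instead: it invokes the data processing inequality (Theorem~\ref{dpi}) for the CPTP map $E_{1}\otimes\id$ to obtain $d^{f}\big((E_{1}\otimes\id)(\rho)\,\|\,E(\rho)\big)\le d^{f}\big(\rho\,\|\,(\id\otimes E_{2})(\rho)\big)$, yielding the symmetric subadditivity
\[
d^{f}_{\mathcal{N}_{\fix}}(\rho)\ \le\ d^{f}\big(\rho\,\|\,(E_{1}\otimes\id)(\rho)\big)+d^{f}\big(\rho\,\|\,(\id\otimes E_{2})(\rho)\big)
\]
in one line; after that, $\cfsi(T^{j}_{t})$ applied to each term and summing finishes. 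Your route is salvageable too: the needed monotonicity $I^{f}_{A_{1}\otimes\id}((\id\otimes E_{2})(\rho))\le I^{f}_{A_{1}\otimes\id}(\rho)$ does hold, since $f^{[2]}\in\mathfrak{C}^{+}$ and $\id\otimes E_{2}$ commutes with $\delta_{A_{1}}\otimes\id$, but this is DPI in disguise and should be stated.

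A small side remark: your worry about Fisher additivity via orthogonality of derivation summands is unnecessary. Since $I^{f}_{A}(\rho)=\tau\big(A(\rho)f'(\rho)\big)$ is manifestly linear in the generator $A$, the identity $I^{f}_{A_{1}\otimes\id+\id\otimes A_{2}}=I^{f}_{A_{1}\otimes\id}+I^{f}_{\id\otimes A_{2}}$ is immediate from the definition, with no appeal to Theorem~\ref{jsr} or any bimodule decomposition.
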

\begin{proof} It suffices to prove for  the 2-fold tensor product. For the $n$-fold tensor product, we may use the standard induction argument. 
Let $E$, $E_{1}$, and $E_{2}$ be the conditional expectations onto $\mathcal{N}_{\fix}$, $\mathcal{N}_{\fix,1}$ and $\mathcal{N}_{\fix,2}$ respectively. Applying \eqref{nme} and Theorem \ref{dpi} gives
\begin{align*} 
d^{f}_{\mathcal{N}_{\fix}}(\rho)=&d^{f}(\rho\| E_{1}\otimes \id_{2} (\rho))+d^{f}(E_{1}\otimes \id_{2} (\rho)\| E(\rho))\\
\leq &d^{f}(\rho\| E_{1}\otimes \id_{2} (\rho))+d^{f}(\rho\| \id_{1}\otimes E_{2}(\rho)).
\end{align*}
For the $n$-fold tensor product, we may use the standard induction argument.
\end{proof}
\noindent The following result is motivated by \cite{Spo78} and \cite{LJR} (lemma 2.6).

\begin{theorem}\label{iexp} Let $T_{t}=e^{-tA}$ be a semigroup of completely positive self-adjoint unital maps on $L_{2}(\mathcal{N},\tau)$. Suppose there exists some positive constant $\lambda$ such that 
$$I_{A}^{f}(T_{t}(\rho))\leq e^{-\lambda t}I_{A}^{f}(\rho), \forall \rho\in\mathcal{N}_{+},$$
then $\cfsi(A)\geq \lambda.$
\end{theorem}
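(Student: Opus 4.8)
The plan is to integrate the hypothesis $I_A^f(T_t(\rho)) \le e^{-\lambda t} I_A^f(\rho)$ in time and use Lemma \ref{gf} (the gradient form identity) to recover the exponential decay of the $f$-relative entropy, from which $\cfsi(A) \ge \lambda$ follows via Proposition \ref{equfi}. Concretely, fix $\rho \in \mathcal{N}_+$ and set $g(t) = d^f_{\mathcal{N}_{\fix}}(T_t(\rho))$. By Lemma \ref{gf} we have $g'(t) = -I_A^f(T_t(\rho))$, and since $f$-relative entropy is nonnegative and decreasing along the semigroup, $g$ is a nonnegative, nonincreasing function with $g(\infty) = 0$ (because $T_t(\rho) \to E(\rho)$ and $d^f(E(\rho)\|E(\rho)) = 0$). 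Therefore
\begin{align*}
d^f_{\mathcal{N}_{\fix}}(\rho) = g(0) = -\int_0^\infty g'(t)\,dt = \int_0^\infty I_A^f(T_t(\rho))\,dt \le \int_0^\infty e^{-\lambda t} I_A^f(\rho)\,dt = \frac{1}{\lambda} I_A^f(\rho).
\end{align*}

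This is precisely condition (1) of Proposition \ref{equfi}, hence $A$ satisfies $\lambda$-$\mfsi$. To upgrade this to $\cfsi(A) \ge \lambda$, I would observe that the hypothesis is stated for $A$, but the whole argument is insensitive to tensoring: one should first note that the assumption $I_{A}^{f}(T_{t}(\rho)) \le e^{-\lambda t} I_{A}^{f}(\rho)$ passing to $A \otimes \id_{\mathcal{F}}$ is what is really needed, OR — more in the spirit of the paper — apply the argument to $A \otimes \id_{\mathcal{F}}$ directly, whose Fisher information decays at the same rate because $I_{A\otimes \id_{\mathcal{F}}}^f = I_{\delta_A \otimes \id}^f$ inherits the bound from the derivation picture. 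Either way, running the integration identity for $A \otimes \id_{\mathcal{F}}$ yields $\lambda$-$\mfsi$ for $A \otimes \id_{\mathcal{F}}$ for every finite von Neumann algebra $\mathcal{F}$, which is the definition of $\lambda$-$\cfsi$, so $\cfsi(A) \ge \lambda$.

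The main obstacle is justifying the boundary term $g(\infty) = 0$ and the interchange of integration with the limit, i.e.\ the differentiability of $t \mapsto d^f_{\mathcal{N}_{\fix}}(T_t(\rho))$ uniformly enough to apply the fundamental theorem of calculus on $[0,\infty)$. Lemma \ref{gf} gives the derivative pointwise; one needs that $g$ is absolutely continuous and that $\int_0^\infty I_A^f(T_t\rho)\,dt < \infty$, which is automatic from the hypothesis since the integrand is dominated by $e^{-\lambda t} I_A^f(\rho)$. The convergence $g(t) \to 0$ follows since $d^f_{\mathcal{N}_{\fix}}(T_t\rho) \le \frac{1}{\lambda} I_A^f(T_t\rho) \le \frac{1}{\lambda} e^{-\lambda t} I_A^f(\rho) \to 0$ — note this uses the $\mfsi$ inequality we are in the process of proving, so the cleanest order is to first establish the finite-time identity $g(0) - g(T) = \int_0^T I_A^f(T_t\rho)\,dt \le \frac{1}{\lambda}(1 - e^{-\lambda T}) I_A^f(\rho)$, then let $T \to \infty$ using only $g(T) \ge 0$ to conclude $g(0) \le \frac{1}{\lambda} I_A^f(\rho)$, avoiding any circularity.

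A secondary technical point is the domain issue: the hypothesis and the conclusion both quantify over $\rho \in \mathcal{N}_+$, but $I_A^f$ and Lemma \ref{gf} are naturally stated for $\rho \in \dom(\delta) \cap \mathcal{N}_+$; I would handle this by first proving the inequality for such $\rho$ and then extending to all of $\mathcal{N}_+$ by the usual approximation/lower-semicontinuity argument (the $f$-relative entropy and Fisher information behave well under $\rho + \epsilon 1$, $\epsilon \to 0^+$, exactly as in the equivalences recorded earlier in the paper), so no new ideas are needed there.
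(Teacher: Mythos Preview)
Your proposal is correct and follows essentially the same approach as the paper: define $g(t)=d^{f}_{\mathcal{N}_{\fix}}(T_{t}(\rho))$, use Lemma~\ref{gf} to get $-g'(t)=I_A^f(T_t(\rho))\le e^{-\lambda t}I_A^f(\rho)=-e^{-\lambda t}g'(0)$, and integrate over $[0,\infty)$. Your treatment is in fact more careful than the paper's, which compresses the boundary term, the tensorization, and the domain issues into a single sentence; your clean finite-$T$ argument using only $g(T)\ge 0$ is exactly the right way to avoid the apparent circularity.
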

\begin{proof}  We use Lemma \ref{gf} again. Define $g(t)=d^{f}_{\mathcal{N}_{\fix}}(T_{t}(\rho))$, then $$-g'(t)\leq -e^{-\lambda t} g'(0).$$
Integrating both sides from $[0, \infty)$ yields $\cfsi(A)\geq \lambda$.
\end{proof}
\begin{remark} In many situations we do not need the condition $f^{[2]}\in\mathfrak{C}^{+}$, such as  Lemma \ref{finn},  \ref{fip},\ref{nme}, \ref{gf}, \ref{fdp}, \ref{ddi}, Prop \ref{equfi}, Theorem \ref{sutp} and \ref{iexp}. However, this condition is necessary to obtain the date processing inequality.
\end{remark}

 \subsection{$\cfsi$ of derivation triple}
Let us recall the definition of a derivation triple in \cite{LJL}. Let $\mathcal{N}$ be a finite von Neumann algebra equipped with a normal faithful tracial state $\tau$, and $\delta$ be a closable $*$-preserving  derivation on $\mathcal{N}$. Suppose there exists a larger finite von Neumann algebra $(\mathcal{M},\tau)$ containing $\mathcal{N}$ and a weakly dense $^*$-subalgebra $\mathcal{A}\subset \mathcal{N}$ such that
\begin{enumerate}[leftmargin=6mm]
\item[(1)] $\mathcal{A}\subset \dom(\delta);$
\item[(2)]$\delta^{*}\bar{\delta}: \mathcal{A}\to\mathcal{A};$ 
\item[(3)] $\delta: \mathcal{A}\rightarrow L_{2}(\mathcal{M},\tau)$.
\end{enumerate}
We define $\pi_{\delta}: \Omega^{1}(\mathcal{A})\rightarrow \mathcal{M}$ by
$$\pi_{\delta}(a\otimes b-1\otimes ab)=\delta(a)b,$$
where
$ \Omega^{1}(\mathcal{A})=\{\sum_{j}(a_{j}\otimes b_{j}-1\otimes a_{j}b_{j})|a_{j}, b_{j}\otimes \mathcal{A}\}\subset \mathcal{A}\otimes \mathcal{A}. $ Thus $\Omega_{\delta}(\mathcal{A})$ is Hilbert $\mathcal{A}$-bimodule with inner product
$$(\delta(a_{1})b_{1}, \delta(a_{2})b_{2})_{\mathcal{A}}=b_{1}^{*}E_{\mathcal{N}}(\delta(a_{1}^{*})\delta(a_{2}))b_{2},$$
where $E_{\mathcal{N}}: \mathcal{M}\rightarrow \mathcal{N}$ is the conditional expectation and $(\cdot, \cdot)_{\mathcal{A}}$ is the $\mathcal{N}$-valued inner product.  A linear operator $\Rc: \Omega_{\delta}(\mathcal{A})\to \mathcal{M}$ is called the Ricci operator of $\triple$ provided that
\begin{enumerate}[leftmargin=6mm]
 \item[(1)]  $\Rc(a\rho b)=a\Rc(\rho) b,\quad \forall a,b\in\mathcal{A},\rho\in\Omega_{\delta}(\mathcal{A})$;
 \item[(2)] there  exists a strongly continuous semigroup $\hat{T}_{t}=e^{-tL}: \mathcal{M}\rightarrow \mathcal{M}$ of completely positive trace preserving maps such that $\Upgamma_{L}(a,b)=E_{\mathcal{N}}(\delta(a^{*})\delta(b))$  and 
$\delta (\delta^{*}\bar{\delta} a)-L(\delta(a))=\Rc(\delta(a))$
for any $ a,b\in\mathcal{A}$.
\end{enumerate}
The derivation $\delta$ is said to admit a Ricci curvature $\Rc\geq\lambda$ bounded below by a constant $\lambda$,  if $( \Rc(\rho),\rho )_{\mathcal{A}}\geq \lambda E_{\mathcal{N}}(\rho^{*}\rho)$ for any $\rho\in \Omega_{\delta}(\mathcal{A})$. We say the generator  $A$ of $T_{t}=e^{-tA}$ admits $\Rc\geq\lambda$ if there exists a derivation triple  $\triple$ such that  
$$\Gamma_{A}(a,b)=E_{\mathcal{N}}(\delta(a^{*})\delta(b)), \quad\forall a,b\in\mathcal{A}$$
and $\delta$ admits $\Rc\geq\lambda.$  It shall be noted that the choice of $\delta$ is not unique, thus we may find a larger Ricci lower bound of  $A$ by choosing a good $\delta$.

\begin{lemma} Let $\triple$ be a derivation triple with a Ricci curvature $\Rc\geq \lambda>0$. Then 
$$ I_{\delta}^{f}(e^{-t\delta^{*}\bar{\delta}}(\rho))\leq e^{-2\lambda t} I_{\delta}^{f}(\rho),\quad \forall\rho\in\mathcal{N}_{+}.$$
\end{lemma}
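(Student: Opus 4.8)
The plan is to differentiate the generalized Fisher information along the semigroup $\hat{T}_t=e^{-tL}$ acting on the gradient $\delta(\rho)$ and to use the Ricci lower bound together with the convexity/monotonicity of $\gamma^{f^{[2]}}$. Recall that $I_{\delta}^{f}(\rho)=\gamma^{f^{[2]}}_{\rho,\rho}(\delta(\rho),\delta(\rho))=\langle \delta(\rho), Q^{\rho}_{f^{[2]}}(\delta(\rho))\rangle$, and that along $\rho_t=e^{-t\delta^{*}\bar{\delta}}(\rho)$ the functional calculus identity \eqref{fcd} gives $\delta(\rho_t)=Q^{\rho_t}_{\mathrm{id}^{[1]}}(\delta(\rho_t))$; more importantly the defining relation of the Ricci operator, $\delta(\delta^{*}\bar\delta\, a)-L(\delta(a))=\Rc(\delta(a))$, converts the chain rule $\frac{d}{dt}\delta(\rho_t)=-\delta(\delta^{*}\bar\delta\,\rho_t)$ into $\frac{d}{dt}\delta(\rho_t)=-L(\delta(\rho_t))-\Rc(\delta(\rho_t))$. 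This is the key structural input.

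First I would set $\omega_t=\delta(\rho_t)$ and $g(t)=I_{\delta}^{f}(\rho_t)=\gamma^{f^{[2]}}_{\rho_t,\rho_t}(\omega_t,\omega_t)$, then differentiate. There are two contributions: the variation of the base points $(\rho_t,\rho_t)$ inside $\gamma^{f^{[2]}}$, and the variation of $\omega_t$. For the latter, bilinearity gives a term $2\,\gamma^{f^{[2]}}_{\rho_t,\rho_t}(\partial_t\omega_t,\omega_t)=-2\,\gamma^{f^{[2]}}_{\rho_t,\rho_t}(L(\omega_t)+\Rc(\omega_t),\omega_t)$. The Ricci bound $(\Rc(\rho),\rho)_{\mathcal{A}}\geq \lambda\, E_{\mathcal{N}}(\rho^{*}\rho)$, fed through the positivity of $Q^{\rho_t}_{f^{[2]}}$ (using $f^{[2]}\geq 0$ from convexity of $f$, and the module structure of $\gamma^{f^{[2]}}$ over $\mathcal{A}$), should give $\gamma^{f^{[2]}}_{\rho_t,\rho_t}(\Rc(\omega_t),\omega_t)\geq \lambda\, g(t)$; this is the step where the $\mathcal{A}$-bimodule property $\Rc(a\rho b)=a\Rc(\rho)b$ and compatibility of $Q^{\rho_t}_{f^{[2]}}$ with left/right multiplication are used. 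Meanwhile the base-point variation term together with the $-2\gamma^{f^{[2]}}(L(\omega_t),\omega_t)$ piece should be shown to be $\leq 0$; this is exactly the infinitesimal form of the data processing/monotonicity inequality \eqref{tgmm} for $\gamma^{F}$ with $F=f^{[2]}\in\mathfrak{C}^{+}$, applied to the channels $\hat{T}_s$ — i.e. $\frac{d}{ds}\big|_{s=0}\gamma^{f^{[2]}}_{\hat T_s(\rho_t),\hat T_s(\rho_t)}(\hat T_s(\omega_t),\hat T_s(\omega_t))\leq 0$, which holds because $\hat T_s$ is CPTP and the map $x\mapsto \gamma^{f^{[2]}}_{\hat T_s x,\hat T_s x}(\hat T_s y,\hat T_s y)$ is decreasing in $s$. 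Combining, $g'(t)\leq -2\lambda\, g(t)$, and Gronwall gives $g(t)\leq e^{-2\lambda t}g(0)$, which is the claim.

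The main obstacle I expect is making the differentiation rigorous and correctly disentangling the ``$L$-part'' of $\partial_t\omega_t$ from the base-point variation, so that the two combine into a single monotonicity statement for the CPTP semigroup $\hat T_s$; the bookkeeping must be done so that the generator $L$ appears simultaneously in the argument and in the base points $\rho_t$, matching the form of \eqref{tgmm}. A secondary technical point is the justification of term-by-term differentiation under the double operator integral (domain issues, density of $\mathcal{A}$), which one would handle as elsewhere in the paper by a perturbation argument $\rho+\epsilon 1$ and by restricting to $\rho\in\mathcal{A}$ first and then passing to general $\rho\in\mathcal{N}_{+}$ by density and lower semicontinuity of $I_{\delta}^{f}$. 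Once the inequality $g'(t)\leq-2\lambda g(t)$ is in hand the conclusion is immediate, so essentially all the work is in the differential inequality.
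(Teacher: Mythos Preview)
Your proposal is correct and follows essentially the same route as the paper. The paper packages your ``$L$-part plus base-point variation'' by introducing the auxiliary function $k(t)=\gamma^{f^{[2]}}_{\rho_t,\rho_t}(\hat T_t\delta(\rho),\hat T_t\delta(\rho))$, observes (via $\Gamma_L=E_{\mathcal N}(\delta(\cdot)^*\delta(\cdot))$) that this equals $\gamma^{f^{[2]}}_{\hat T_t(\rho),\hat T_t(\rho)}(\hat T_t\delta(\rho),\hat T_t\delta(\rho))$ and hence $k'(0)\leq 0$ by monotonicity---precisely your infinitesimal data-processing step---then computes $h'(0)-k'(0)$ using the intertwining relation to isolate the Ricci term, applies the bimodule property of $\Rc$ to pull $(Q^{\rho}_{f^{[2]}})^{1/2}$ through, and finishes by the semigroup shift and Gronwall exactly as you describe.
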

\begin{proof} 
In the proof, we use the following notations $A=\delta^{*}\bar{\delta}$, $T_{t}=e^{-tL}$, and $T_{t}(\rho)=\rho_{t}$. Let $\hat{T}_{t}=e^{-tL}$ be the semigroup given in the definition of the Ricci curvature. Let us consider two functions
\begin{align*}
h(t)=I_{\delta}^{f}(\rho_{t})=\tau\left(\int_{0}^{\infty}\int_{0}^{\infty}\delta(\rho_{t})\frac{f'(s)-f'(l)}{s-l}dE_{\rho_{t}}(s)\delta(\rho_{t})dE_{\rho_{t}}(l) \right)
\end{align*}
and 
\begin{align*}
k(t)=&\gamma^{f^{[2]}}_{\rho_{t},\rho_{t}}(\hat{T}_{t}(\delta(\rho)),\hat{T}_{t}(\delta(\rho)) )\\
=&\tau\left(\int_{0}^{\infty}\int_{0}^{\infty} \hat{T}_{t}(\delta(\rho))\frac{f'(s)-f'(l)}{s-l}  dE_{\rho_{t}}(s) \hat{T}_{t}(\delta(\rho))dE_{\rho_{t}}(l)\right).
\end{align*}
By $\Upgamma_{L}(a,b)=E_{\mathcal{N}}(\delta(a^{*})\delta(b))$, then 
$$k(t)=\gamma^{f^{[2]}}_{\hat{T}_{t}(\rho),\hat{T}_{t}(\rho)}(\hat{T}_{t}(\delta(\rho)),\hat{T}_{t}(\delta(\rho)) ).$$
Noting $f^{[2]}\in\mathfrak{C}^{+}$, we deduce that $k(t)\leq k(0)$ and   
\begin{align} \label{kn}
k'(0)\leq 0.
\end{align}
By the product rule, we have
\begin{align*}
h'(t)=&-2 \tau (\int_{0}^{\infty}\int_{0}^{\infty}\delta(A(\rho_{t}))\frac{f'(s)-f'(l)}{s-l}dE_{\rho_{t}}(s)\delta(\rho_{t})dE_{\rho_{t}}(l)))+h'_{r}(t)
\end{align*}
and 
\begin{align*}
k'(t)=&-2\tau(\int_{0}^{\infty}\int_{0}^{\infty} L(\hat{T}_{t}(\delta(\rho)))\frac{f'(s)-f'(l)}{s-l}dE_{\rho_{t}}(s) \hat{T}_{t}(\delta(\rho)) dE_{\rho_{t}}(l)))+k'_{r}(t),
\end{align*}
where $h'_{r}$ and $k'_{r}$ are the derivatives corresponding to $dE_{\rho_{t}}$. We make an important observation 
\begin{align} \label{hki}
 h'_{r}(0)=k'_{r}(0).
\end{align}
Together with $\delta (\delta^{*}\bar{\delta} a)-L(\delta(a))=\Rc(\delta(a))$, then  
\begin{align*}h'(0)-k'(0)=&-2\tau(\int_{0}^{\infty}\int_{0}^{\infty}\Rc(\delta(\rho)))\frac{f'(s)-f'(l)}{s-l}dE_{\rho}(s)\delta(\rho)dE_{\rho}(l)))\\
=&-2\tau\left(\Rc\left(Q^{\rho}_{f^{[2]}})^{1/2}(\delta(\rho))\right) (Q^{\rho}_{f^{[2]}})^{1/2}(\delta(\rho))  \right).
\end{align*}
Using that $\Rc\geq\lambda$, we infer that
\begin{align*} h'(0)-k'(0)\leq -2\lambda h(0).\end{align*}
By \eqref{kn}, then \begin{align} \label{ii}h'(0)\leq -2\lambda h(0).\end{align}
Setting
$h_{s}(t)=I_{\delta}^{f}(\rho_{t+s})$, then $h'_{s}(0)=h'(s)$. Inequality \eqref{ii} remains true for $h_{s}$. Hence
$$h'(s)=h_{s}'(0)\leq -2\lambda h_{s}(0)=-2\lambda h(s)$$
 completes the proof.
\end{proof}
\noindent Applying Theorem \ref{iexp}, we get the following complete Sobolev inequality.
\begin{theorem} \label{main}Let $\triple$ be a derivation triple  with a Ricci curvature $\Rc\geq \lambda>0$. Let $f:\Bbb{R}^{+}\to \Bbb{R}^{+}$ be continuously differentiable and $f^{[2]}\in\mathfrak{C}^{+}$.  Then we have $$\cfsi\triple \geq 2\lambda.$$
\end{theorem}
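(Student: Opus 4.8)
The plan is to chain together the two results that immediately precede this theorem. By Theorem~\ref{iexp}, to conclude $\cfsi\triple\geq 2\lambda$ it suffices to establish the exponential decay of the $f$-Fisher information, namely
\[
I_{\delta}^{f}\bigl(e^{-t\delta^{*}\bar{\delta}}(\rho)\bigr)\leq e^{-2\lambda t}\,I_{\delta}^{f}(\rho),\qquad \forall\rho\in\mathcal{N}_{+},
\]
but this is precisely the content of the lemma immediately above the theorem, which holds under exactly the stated hypotheses (a derivation triple with $\Rc\geq\lambda>0$ and $f$ continuously differentiable with $f^{[2]}\in\mathfrak{C}^{+}$). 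So the first step is simply to invoke that lemma.

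The second step is to address the completeness, i.e. the passage from $\mfsi$ to $\cfsi$. Here I would observe that the whole construction is stable under tensoring with an auxiliary finite von Neumann algebra $\mathcal{F}$: given the derivation triple $\triple$, the amplified derivation $\delta\otimes\id_{\mathcal{F}}$ on $\mathcal{N}\otimes\mathcal{F}\subset\mathcal{M}\otimes\mathcal{F}$ is again a closable $*$-preserving derivation, its associated generator is $\delta^{*}\bar{\delta}\otimes\id_{\mathcal{F}}$, and the Ricci operator $\Rc\otimes\id_{\mathcal{F}}$ still satisfies $(\,(\Rc\otimes\id)(\varrho),\varrho\,)\geq\lambda\,E_{\mathcal{N}\otimes\mathcal{F}}(\varrho^{*}\varrho)$ because the lower bound is preserved entrywise under amplification. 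Thus $\triple\otimes(\mathcal{F},\tau_{\mathcal{F}})$ is again a derivation triple with $\Rc\geq\lambda$, the lemma applies verbatim to it, and Theorem~\ref{iexp} (applied to $A\otimes\id_{\mathcal{F}}$) gives $\mfsi$ with constant $2\lambda$ for the amplified system. Since $\mathcal{F}$ is arbitrary, this is exactly $2\lambda$-$\cfsi$, hence $\cfsi\triple\geq 2\lambda$.

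The one point that needs a word of care — and which I expect to be the only real obstacle — is checking that the condition $f^{[2]}\in\mathfrak{C}^{+}$, which is about CPTP maps on a single algebra, survives the amplification, i.e. that it is genuinely a \emph{complete} property rather than merely a $1$-positive one. But this is built into the definitions: membership $F\in\mathfrak{C}^{+}$ quantifies over \emph{all} finite von Neumann algebras and all CPTP maps $\beta$ between them, so in particular over $\beta\otimes\id$-type maps; consequently the data processing inequality (Theorem~\ref{dpi}) and the monotonicity estimate \eqref{tgmm} that drive the lemma's proof (through \eqref{kn} and \eqref{hki}) hold equally for the amplified objects. I would make this explicit in one sentence. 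Everything else in the argument is a direct citation, so the proof itself is short.

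\begin{proof}
By the preceding lemma, the derivation triple $\triple$ with $\Rc\geq\lambda$ satisfies
\[
I_{\delta}^{f}\bigl(e^{-t\delta^{*}\bar{\delta}}(\rho)\bigr)\leq e^{-2\lambda t}\,I_{\delta}^{f}(\rho),\qquad\forall\rho\in\mathcal{N}_{+}.
\]
Theorem~\ref{iexp} then yields the modified $f$-Sobolev inequality with constant $2\lambda$. To obtain the complete version, fix any finite von Neumann algebra $\mathcal{F}$. The amplification $\delta\otimes\id_{\mathcal{F}}$ is again a closable $*$-preserving derivation with generator $\delta^{*}\bar{\delta}\otimes\id_{\mathcal{F}}$, and $\Rc\otimes\id_{\mathcal{F}}$ serves as a Ricci operator for the triple $(\mathcal{N}\otimes\mathcal{F}\ssubset\mathcal{M}\otimes\mathcal{F},\delta\otimes\id,\tau\otimes\tau_{\mathcal{F}})$ with the same lower bound $\lambda$, since the inequality $(\Rc(\varrho),\varrho)_{\mathcal{A}}\geq\lambda\,E_{\mathcal{N}}(\varrho^{*}\varrho)$ is preserved under tensoring with the identity. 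Moreover $f^{[2]}\in\mathfrak{C}^{+}$ is by definition a property of all CPTP maps between arbitrary finite von Neumann algebras, so it applies to the amplified maps as well. Hence the lemma applies to this amplified derivation triple, and Theorem~\ref{iexp} gives that $A\otimes\id_{\mathcal{F}}$ satisfies $2\lambda$-$\mfsi$. As $\mathcal{F}$ was arbitrary, $A$ satisfies $2\lambda$-$\cfsi$, i.e. $\cfsi\triple\geq 2\lambda$.
\end{proof}
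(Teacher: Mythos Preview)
Your proposal is correct and follows the paper's approach: the paper's proof is literally the single sentence ``Applying Theorem~\ref{iexp}, we get the following complete Sobolev inequality,'' i.e.\ it combines the preceding lemma (exponential decay of $I_{\delta}^{f}$) with Theorem~\ref{iexp}. Your write-up does the same but is more explicit about the passage from $\mfsi$ to $\cfsi$ via amplification by an arbitrary finite von Neumann algebra $\mathcal{F}$---a point the paper leaves entirely implicit (indeed, the proof of Theorem~\ref{iexp} itself only displays the $\mfsi$ argument), so your added sentence on stability of the derivation triple and Ricci lower bound under tensoring is a genuine, if routine, improvement in rigor.
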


\section{Applications}
\subsection{$p$ norm estimate}
\begin{theorem} Let $\triple$ be a derivation triple with a Ricci curvature $\Rc\geq \lambda>0$ and $T_{t}=e^{-t\delta^{*}\bar{\delta}}$. Then we have 
$$ \|T_{t}(\rho)-E(\rho)\|_{p}\leq e^{-\lambda t}\sqrt{\frac{2}{p(p-1)}} \|\rho\|_{p}^{1-p/2}(\|\rho\|_{p}^{p}-\|E(\rho)\|_{p}^{p})^{1/2},\quad \forall \rho\in\mathcal{N}_{+}.$$
\end{theorem}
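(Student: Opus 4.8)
The plan is to deduce the stated $p$-norm decay from the complete $p$-Sobolev inequality (Theorem \ref{main} with $f(x)=x^{p}$, $p\in(1,2)$) together with the exponential decay of the $p$-relative entropy that it encodes. Recall from Proposition \ref{equfi} and the discussion around \eqref{dpc} that $\cfsi\triple\geq 2\lambda$ gives, with respect to the conditional expectation $E$ onto $\mathcal{N}_{\fix}$,
\[
d^{p}_{\mathcal{N}_{\fix}}(T_{t}(\rho))\leq e^{-2\lambda t}\,d^{p}_{\mathcal{N}_{\fix}}(\rho),\qquad d^{p}_{\mathcal{N}_{\fix}}(\rho)=\tau(\rho^{p}-E(\rho)^{p}),
\]
for all $\rho\in\mathcal{N}_{+}$. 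So the first step is just to record this consequence of Theorem \ref{main}, noting that $f(x)=x^{p}$ is continuously differentiable on $\Bbb{R}^{+}$ and that $f^{[2]}(x,y)=p\,\frac{x^{p-1}-y^{p-1}}{x-y}\in\mathfrak{C}^{+}$ by the Example following Proposition \ref{cp} (with $p-1\in(0,1)$), so the hypotheses of Theorem \ref{main} are met.

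The second step is a pointwise algebraic estimate comparing $\|\rho-\sigma\|_{p}^{2}$ with $d^{p}(\rho\|\sigma)$ for $\sigma=E(\rho)$. Since $T_{t}$ is CPTP and commutes with $E$, we have $E(T_{t}(\rho))=E(\rho)$, so $T_{t}(\rho)-E(\rho)$ plays the role of $\rho-\sigma$ at time $t$. The quantity $d^{p}(\rho\|\sigma)=\tau(\rho^{p}-\sigma^{p}-p(\rho-\sigma)\sigma^{p-1})$ is, by the integral representation used in the proof of Theorem \ref{dpi}, equal to $\int_{0}^{1}(1-t)\,H''_{\rho,\sigma}(t)\,dt$ with $H''_{\rho,\sigma}(t)=\gamma^{f^{[2]}}_{a(t),a(t)}(\rho-\sigma,\rho-\sigma)$ and $a(t)=(1-t)\sigma+t\rho$. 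The natural lower bound is a noncommutative Bregman/strong-convexity inequality of the form
\[
d^{p}(\rho\|\sigma)\ \geq\ \tfrac{p(p-1)}{2}\,\|\rho-\sigma\|_{p}^{2}\,\big(\text{something like }\max(\|\rho\|_{p},\|\sigma\|_{p})\big)^{p-2},
\]
matching the exponent $1-p/2$ and the constant $\sqrt{2/(p(p-1))}$ in the statement after taking square roots; since $p-2<0$, the factor $\|\rho\|_{p}^{p-2}$ is the smallest such bound and produces $\|\rho\|_{p}^{(p-2)/2}=\|\rho\|_{p}^{-(1-p/2)}$ on the left, which rearranges to the stated inequality once combined with $d^{p}_{\mathcal{N}_{\fix}}(\rho)=\|\rho\|_{p}^{p}-\|E(\rho)\|_{p}^{p}$. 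In the commutative case this is the elementary inequality $a^{p}-b^{p}-p(a-b)b^{p-1}\geq \tfrac{p(p-1)}{2}(a-b)^{2}\max(a,b)^{p-2}$; in the tracial von Neumann setting one would prove it by a two-point spectral reduction (diagonalizing with respect to the joint data, reducing $H''$ to a Schur-multiplier expression $\sum_{i,j}p\frac{s_i^{p-1}-t_j^{p-1}}{s_i-t_j}|(\rho-\sigma)_{ij}|^{2}$) and then bounding the divided difference $p\frac{s^{p-1}-t^{p-1}}{s-t}\geq p(p-1)\max(s,t)^{p-2}$ together with the operator-convexity/uniform-convexity of $\|\cdot\|_{p}^{2}$ on $L_{p}$ for $1<p\le 2$; invoking the known uniform convexity constants of the noncommutative $L_{p}$ spaces (Ball–Carlen–Lieb type) is the cleanest route.

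The third step is simply to combine: write $\|T_{t}(\rho)-E(\rho)\|_{p}^{2}\leq \frac{2}{p(p-1)}\|T_{t}(\rho)\|_{p}^{2-p}\,d^{p}_{\mathcal{N}_{\fix}}(T_{t}(\rho))\leq \frac{2}{p(p-1)}\|\rho\|_{p}^{2-p}\,e^{-2\lambda t}\,d^{p}_{\mathcal{N}_{\fix}}(\rho)$, using $\|T_{t}(\rho)\|_{p}\leq\|\rho\|_{p}$ (contractivity of a unital CPTP map on $L_{p}$, which also requires $2-p>0$ so the exponent is of the right sign — here $2-p\in(0,1)$), and then take square roots and substitute $d^{p}_{\mathcal{N}_{\fix}}(\rho)=\|\rho\|_{p}^{p}-\|E(\rho)\|_{p}^{p}$ to land exactly on $e^{-\lambda t}\sqrt{2/(p(p-1))}\,\|\rho\|_{p}^{1-p/2}(\|\rho\|_{p}^{p}-\|E(\rho)\|_{p}^{p})^{1/2}$. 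The main obstacle is the second step: making the strong-convexity lower bound $d^{p}(\rho\|E(\rho))\geq \tfrac{p(p-1)}{2}\|\rho\|_{p}^{p-2}\|\rho-E(\rho)\|_{p}^{2}$ rigorous in the noncommutative (non-commuting $\rho$ and $E(\rho)$) setting with the sharp constant; everything else is bookkeeping with results already in the paper.
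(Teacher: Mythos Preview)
Your three-step outline is exactly the skeleton of the paper's proof: (i) invoke $\cpsi\triple\ge 2\lambda$ from Theorem~\ref{main} to obtain $d^{p}_{\mathcal{N}_{\fix}}(\rho_{t})\le e^{-2\lambda t}d^{p}_{\mathcal{N}_{\fix}}(\rho)$; (ii) prove the strong-convexity bound
\[
\|\rho_{t}\|_{p}^{p}-\|E(\rho)\|_{p}^{p}\ \ge\ \tfrac{p(p-1)}{2}\,\|\rho_{t}\|_{p}^{p-2}\,\|\rho_{t}-E(\rho)\|_{p}^{2};
\]
(iii) chain, use $\|\rho_{t}\|_{p}\le\|\rho\|_{p}$, and take square roots. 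Step (iii) and the reduction to (ii) are identical to the paper's argument.

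The gap is your step (ii). Neither of the routes you sketch actually delivers the inequality with the correct norms and constant. The Schur-multiplier reduction of $H''_{\rho,\sigma}(t)$ together with the scalar bound $p\frac{s^{p-1}-t^{p-1}}{s-t}\ge p(p-1)\max(s,t)^{p-2}$ produces a weighted $L_{2}$ lower bound in the eigenbasis of $a(t)$, not the $L_{p}$ quantity $\|\rho\|_{p}^{p-2}\|\rho-E(\rho)\|_{p}^{2}$; there is no obvious way to pass from one to the other when $\rho$ and $E(\rho)$ do not commute. Ball--Carlen--Lieb uniform convexity controls $\|\cdot\|_{p}^{2}$, not $\|\cdot\|_{p}^{p}$, and does not yield the displayed inequality either. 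The paper fills this gap by invoking the Ricard--Xu noncommutative martingale convexity inequality \cite{RX}, namely $\psi''(0)\ge p(p-1)\|a\|_{p}^{p-2}\|b\|_{p}^{2}$ for $\psi(s)=\|a+sb\|_{p}^{p}$ and self-adjoint $a,b$. From this one checks that
\[
G_{a,b}(s)=\|a+sb\|_{p}^{p}-\tfrac{p(p-1)}{2}\,s^{2}\,\|a+sb\|_{p}^{p-2}\|b\|_{p}^{2}
\]
is convex in $s$. Taking $a=E(\rho)$ and $b=\rho_{t}-E(\rho)$, the contractivity of $E$ on $L_{p}$ gives $\|a+sb\|_{p}^{p}\ge\|E(a+sb)\|_{p}^{p}=\|a\|_{p}^{p}$ for all $s$, so $G_{a,b}'(0^{+})\ge 0$; convexity then forces $G_{a,b}(1)\ge G_{a,b}(0)$, which is precisely the inequality in (ii). Once you replace your heuristic for step (ii) by this Ricard--Xu argument, your proof coincides with the paper's.
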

\begin{proof} This proof is inspired by \cite{RX}.  For self-adjoint $a,b\in\mathcal{N}$, we define 
$$G_{a,b}(s)=\|a+sb\|_{p}^{p}-\frac{p(p-1)}{2}s^{2}\|a+sb\|_{p}^{p-2} \|b\|_{p}^{2}.$$
It follows from the definition that $G_{a,b}$ is convex over $\Bbb{R}$ if $G_{x,y}''(0)\geq 0$ for any self-adjoint $x,y\in\mathcal{N}$. In \cite{RX}, they considered the following function 
$$\psi(s)=\|a+sb\|_{p}^{p}$$
with an additional condition that $a$ is invertible. They proved that 
\begin{align}
\psi''(0)\geq p(p-1)\|a\|_{p}^{p-2}\|b\|_{p}^{2}.
\end{align}
It implies that $G_{a,b}''(0)\geq 0$. This remains true if $a$ is not invertible, see the proof of Theorem 2 in \cite{RX}. Let $a=E(\rho_{t})=E(\rho)$ and $b=\rho_{t}-E(\rho)$, then 
\[\|a+sb\|_{p}^{p}\geq \| E(a+sb)\|_{p}^{p}=\|a\|_{p}^{p}.\]
Hence the right derivative of $G_{a,b}$ at $0$ is nonnegative,
and convexity implies  $G_{a,b}'(s)\geq 0$ for any $s\geq 0$. In particular $G(1)\geq G(0)$, then 
$$\|E(\rho)\|_{p}^{p}+\frac{p(p-1)}{2}\|\rho_{t}\|_{p}^{p-2}\|\rho_{t}-E(\rho)\|_{p}^{2}\leq \|\rho_{t}\|_{p}^{p}.$$
By $\cpsi$, we have 
\begin{align*}
\|\rho_{t}\|_{p}^{p}\leq \|E(\rho)\|_{p}^{p}+e^{-2\lambda t} \left(\|\rho\|_{p}^{p} -\|E(\rho)\|_{p}^{p} \right).
\end{align*}
Chaining the two inequalities gives 
\begin{align*}
\|\rho_{t}-E(\rho)\|_{p}^{2} \leq  e^{-2\lambda t} \frac{2}{p(p-1)} \|\rho_{t}\|_{p}^{2-p}\left(\|\rho\|_{p}^{p} -\|E(\rho)\|_{p}^{p} \right).
\end{align*}
Noting $\|\rho_{t}\|_{p}\leq \|\rho\|_{p}$ and  taking the square root of the inequality above complete the proof.
\end{proof}

\subsection{Bakry-\'Emery criterion}
Let $(M,g)$ be a smooth $n$-dimensional Riemannian manifold without boundary. For a smooth function $U\in C^{\infty}(M)$, we define a probability measure $\mu$ by
$$d\mu=\frac{1}{\Zu}e^{-U}dvol$$ with the normalization factor $\Zu=\int_{M} e^{-U}dvol$ and a Bakry-\'Emery Ricci curvature $\Rcu$ by $$\Rcu=\Rc+\Hess(U).$$ Applying  Theorem \ref{main} , we obtain the modified Laplace operator 
$$\Du=\Delta+\nabla U \cdot \nabla ,$$
where $\Delta$ is the Laplace-Beltrami operator.  See \cite{LJL} for the definition of a derivation triple of a Riemannian manifold.
\begin{theorem} Let $(M, g, \mu)$ be a smooth Riemannian manifold with the measure $\mu$ defined by $d\mu=\frac{1}{\Zu} e^{-U} dvol$ with $\Zu=\int_{M} e^{-U}dvol$ for $U\in C^{\infty}(M)$. Given that  $\Rcu\geq \kappa>0$ and $f^{[2]}\in\mathfrak{C}^{+}$, then
$$\cfsi(\Du)\geq 2\kappa.$$
\end{theorem}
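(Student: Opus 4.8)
The plan is to deduce this from Theorem \ref{main} by producing a derivation triple attached to $\Du$ whose Ricci operator is bounded below by $\kappa$, after which the conclusion $\cfsi(\Du)\geq 2\kappa$ is immediate. Following the construction of a derivation triple of a Riemannian manifold in \cite{LJL}, I would take $\mathcal{N}=L_\infty(M,\mu)$, the weakly dense $*$-subalgebra $\mathcal{A}=C_c^\infty(M)$, and the gradient $\delta=\nabla$, regarded as a closable $*$-preserving derivation from $L_2(M,\mu)$ into the Hilbert $\mathcal{A}$-bimodule of square-integrable vector fields (equivalently $1$-forms) equipped with the pointwise Riemannian metric and the natural left and right multiplications, with $\mathcal{M}$ the enveloping von Neumann algebra generated by $\mathcal{N}$ and these sections. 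Integration by parts against $d\mu=\frac{1}{\Zu}e^{-U}d\mathrm{vol}$ identifies $\delta^{*}\bar{\delta}=\Delta+\nabla U\cdot\nabla=\Du$, so $T_t=e^{-t\delta^{*}\bar{\delta}}$ is exactly $e^{-t\Du}$.

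Next I would read off the Ricci operator of the triple from the weighted Bochner--Weitzenb\"ock identity. For $\phi\in C_c^\infty(M)$ one has
\[
\nabla(\Du\phi)=L(\nabla\phi)+\big(\Rc+\Hess(U)\big)(\nabla\phi)=L(\nabla\phi)+\Rcu(\nabla\phi),
\]
where $L=\nabla^{*}\nabla+\nabla_{\nabla U}$ is the weighted connection (Bochner) Laplacian on vector fields; as in \cite{LJL}, $\hat{T}_t=e^{-tL}$ is a strongly continuous semigroup of completely positive trace preserving maps on $\mathcal{M}$ with $\Upgamma_{L}(a,b)=E_{\mathcal{N}}(\delta(a^{*})\delta(b))$, and the displayed identity is precisely $\delta(\delta^{*}\bar{\delta}\,\phi)-L(\delta\phi)=\Rcu(\delta\phi)$. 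Hence the Ricci operator of the triple is $\Rc=\Rcu$. The hypothesis $\Rcu\geq\kappa>0$ says $\langle\Rcu(X),X\rangle_g\geq\kappa\langle X,X\rangle_g$ pointwise, which upon integrating against $\mu$ yields $(\Rc(\omega),\omega)_{\mathcal{A}}\geq\kappa\,E_{\mathcal{N}}(\omega^{*}\omega)$ for every $\omega\in\Omega_\delta(\mathcal{A})$; thus $\triple$ admits Ricci curvature $\Rc\geq\kappa$. Since $f$ is continuously differentiable with $f^{[2]}\in\mathfrak{C}^{+}$, Theorem \ref{main} then gives $\cfsi(\Du)=\cfsi\triple\geq 2\kappa$.

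The main obstacle is the technical step invoked in the second paragraph: verifying that $L=\nabla^{*}\nabla+\nabla_{\nabla U}$ genuinely generates a completely positive trace preserving semigroup on the enveloping algebra $\mathcal{M}$ with the correct gradient form, together with the attendant essential self-adjointness and domain questions on a possibly noncompact $M$ (which force working with $C_c^\infty(M)$ and passing to closures carefully). These are handled exactly as in the unweighted case treated in \cite{LJL}: the drift terms $\nabla U\cdot\nabla$ and $\nabla_{\nabla U}$ are lower-order perturbations that do not affect complete positivity, and the curvature computation is the classical Bakry--\'Emery refinement of the Bochner formula. Everything else is a direct application of the preceding machinery, in particular Theorem \ref{main}.
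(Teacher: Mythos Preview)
Your proposal is correct and follows precisely the approach the paper takes: the paper states this theorem as a direct application of Theorem~\ref{main}, deferring to \cite{LJL} for the derivation triple of a Riemannian manifold and the identification of its Ricci operator with $\Rcu$ via the weighted Bochner formula. Your outline simply spells out those deferred details (the gradient derivation $\delta=\nabla$, the weighted connection Laplacian $L$, and the Bakry--\'Emery Bochner identity), so there is nothing to add.
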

\noindent Let $f(x)=x^{p}$, we obtain the complete Beckner inequalities.
\begin{corollary}[complete Beckner Inequalities] 
Let $(M, g, \mu)$ be a smooth Riemannian manifold with the measure $\mu$ defined by $d\mu=\frac{1}{\Zu} e^{-U} dvol$ with $\Zu=\int_{M} e^{-U}dvol$ for $U\in C^{\infty}(M)$. Given that  $\Rcu\geq \kappa>0$, then
$$\cpsi(\Du)\geq 2\kappa.$$
\end{corollary}
\noindent By Theorem \ref{sutp}, we have the following result.
\begin{theorem} 
Let $\nu$ be the probability measure defined by $d\nu=\frac{1}{\Zv}e^{-V}dvol$ with $V\in C^{\infty}(M)$, where $\Zv$ is the normalization factor.  If $\|U-V\|_{\infty}\leq C$ and $f^{[2]}\in\mathfrak{C}^{+}$, then
$$e^{2C}\cfsi(\Du)\geq  \cfsi(\Dv)
 .$$
\end{theorem}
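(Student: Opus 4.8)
The plan is to obtain this from the change of measure principle, Theorem~\ref{sutp}, after discarding the normalization factors $\Zu$ and $\Zv$. The structural point is that $\Du$ and $\Dv$ come from the \emph{same} derivation $\delta=\nabla$ on $M$: writing $\tau_{\mu},\tau_{\nu}$ for the tracial states $\rho\mapsto\int_{M}\rho\,d\mu$ and $\rho\mapsto\int_{M}\rho\,d\nu$ on $\mathcal{N}=L_{\infty}(M)$, one has $\Du=\nabla^{*}_{\tau_{\mu}}\bar\nabla$ and $\Dv=\nabla^{*}_{\tau_{\nu}}\bar\nabla$, so the two generators differ only through the adjoint, i.e.\ through the trace. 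The heat semigroups $e^{-t\Du}$ and $e^{-t\Dv}$ do not depend on the trace, and on a connected $M$ both have fixed-point algebra $\mathcal{N}_{\fix}=\Bbb{C}$ with the same unital trace-preserving conditional expectation onto it; so by \eqref{dcs} the relative entropy $d^{f,\tau}_{\mathcal{N}_{\fix}}=d^{f,\tau}_{\Bbb{C}}$ and the Fisher information $I^{f,\tau}_{\nabla}$ are precisely the functionals occurring in Lemmas~\ref{fip} and~\ref{fdp}, evaluated against the two traces. Hence the chain of inequalities proving Theorem~\ref{sutp} (the combination of those two lemmas) applies verbatim to the common derivation $\nabla$ and compares $\cfsi(\Du)$ with $\cfsi(\Dv)$.

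Next I would peel off the normalization constants by homogeneity. The defining inequality $\lambda\,d^{f,\tau}_{\mathcal{N}_{\fix}}(\rho)\le I^{f,\tau}_{A}(\rho)$ — and its amplification by $\id_{\mathcal{F}}$ — scales by $\alpha>0$ on both sides when $\tau\mapsto\alpha\tau$ (the trace-preserving conditional expectation onto a fixed subalgebra is unchanged, being pinned down by unitality), so $\cfsi$ is insensitive to rescalings of the trace; and rescaling $\tau$ does not change $\nabla^{*}$, hence not the generator. Since Lemmas~\ref{fip} and~\ref{fdp} are stated for general normal faithful traces, I may therefore compute $\cfsi(\Du)$ with the unnormalized trace $\widetilde{\tau}_{\mu}(\rho)=\int_{M}\rho\,e^{-U}\,dvol$ and $\cfsi(\Dv)$ with $\widetilde{\tau}_{\nu}(\rho)=\int_{M}\rho\,e^{-V}\,dvol$.

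Finally, the Radon--Nikodym derivative over the centre $Z(\mathcal{N})=\mathcal{N}$ is $\dfrac{d\widetilde{\tau}_{\mu}}{d\widetilde{\tau}_{\nu}}=e^{V-U}$, and $\|U-V\|_{\infty}\le C$ gives $e^{-C}\le e^{V-U}\le e^{C}$, so Theorem~\ref{sutp} with $\tau_{1}=\widetilde{\tau}_{\mu}$, $\tau_{2}=\widetilde{\tau}_{\nu}$, $c_{2}=e^{-C}$, $c_{1}=e^{C}$ yields
$$\cfsi(\Du)\ \ge\ \frac{c_{2}}{c_{1}}\,\cfsi(\Dv)\ =\ e^{-2C}\,\cfsi(\Dv),$$
which is the asserted $e^{2C}\cfsi(\Du)\ge\cfsi(\Dv)$. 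I expect the only delicate step to be the identification in the first paragraph — that $\Du$ and $\Dv$ share a derivation and fixed-point structure, so Lemmas~\ref{fip} and~\ref{fdp} genuinely apply across two \emph{different} generators and not just one. The homogeneity step is what keeps the constant sharp: the probability densities themselves satisfy only $e^{-2C}\le\frac{d\mu}{d\nu}\le e^{2C}$, because the ratio $\Zv/\Zu$ already contributes an extra factor $e^{\pm C}$, which would give the weaker $e^{4C}$. (The hypothesis $f^{[2]}\in\mathfrak{C}^{+}$ is inherited from the preceding theorems; as noted in the remark after Theorem~\ref{iexp}, it is not needed for a change-of-measure statement such as this.)
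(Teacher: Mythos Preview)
Your approach is correct and is precisely the paper's (the paper states the result as an immediate consequence of Theorem~\ref{sutp} and gives no further proof); you have supplied the structural details the paper leaves implicit, namely that $\Du$ and $\Dv$ arise from the common derivation $\nabla$ under two traces, and that passing to the unnormalized weights $e^{-U}dvol$ and $e^{-V}dvol$ via scale-invariance of $\cfsi$ is what yields the sharp factor $e^{2C}$ rather than $e^{4C}$. One minor slip in the exposition: the conditional expectations onto $\Bbb{C}$ for $\tau_\mu$ and $\tau_\nu$ are not literally the same map, and the semigroups $e^{-t\Du}$, $e^{-t\Dv}$ certainly do depend on the weight --- but since your argument routes through \eqref{dcs} and Lemma~\ref{fdp} (which compare $d^{f,\tau_1}_{\mathcal{K}}$ and $d^{f,\tau_2}_{\mathcal{K}}$ for the same subalgebra $\mathcal{K}=\ker\bar\nabla$ via the infimum), neither of these affects the conclusion.
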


\subsection{Random Transpositions}
Let $S_{n}$ be the permutation group on $\{1,\dots, n\}$, and we consider the Laplace operator $\Delta_{n}$ given by
$$(\Delta_{n}f)(\sigma)=\frac{1}{n}\sum_{i,j=1}^{n} \left[f(\sigma)-f(\sigma^{ij})\right],$$
where $\sigma^{ij}$ denotes the configuration of $\sigma$ after swapping the elements on $i$-th site and $j$-th site. For example let $\sigma=(1\quad 3\quad 2)$ and $i=1$ and $j=2$, then $\sigma^{ij}=(3\quad 1\quad 2)$. It is well-known that $\Delta_{n}$ is ergodic, and thus $E_{n}(f)=\frac{1}{n!}\sum_{\sigma\in S_{n}}f(\sigma).$ 
The lower bound of Ricci curvature $\Rc$ of the random transposition on the Symmetric group $S_{n}$ is defined using the geodesic convexity, and $\Rc\geq \frac{4}{n}$ (\cite{emt} and \cite{fm}). MLSIs were studied in \cite{goel}, \cite{bt06}, and \cite{gao} using the martingale methods in \cite{ly98}, and they proved that
\begin{align*}
1\leq \mlsi(\Delta_{n}) \leq 4.
\end{align*}
The upper was given by the spectral gap $\lambda_{2}(\Delta_{n})=2$ (\cite{ds87}).
We also apply the martingale methods and establish a similar relation between $\cpsi(\Delta_{n+1})$ and $\cpsi(\Delta_{n})$: 
\begin{theorem} \label{rt} Let $p\in(1,2)$, then $$p\leq \cpsi(\Delta_{n})\leq 4\quad \text{and}\quad 1\leq \clsi(\Delta_{n})\leq 4$$ for any $n\geq 2$. 
\end{theorem}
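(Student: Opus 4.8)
The plan is to follow the martingale/induction strategy of Lee--Yau \cite{ly98} as adapted in \cite{goel, gao}, but now at the level of the $p$-Fisher information and $p$-relative entropy rather than the classical entropy. First I would record the two easy bounds. The upper bound $\cpsi(\Delta_n)\le 4$ follows exactly as in the $\clsi$ case: the spectral gap $\lambda_2(\Delta_n)=2$ of \cite{ds87} forces $\cpsi(\Delta_n)\le 2\lambda_2(\Delta_n)=4$ since, as $p\to 2^+$, $\cpsi$ degenerates to (twice) the spectral gap, and the finite-$p$ value is controlled by the same test function. The lower bound $\clsi(\Delta_n)\ge 1$ is the statement proved in \cite{gao}, so it may be quoted; the substance is the claim $\cpsi(\Delta_n)\ge p$ for $p\in(1,2)$.

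For the lower bound I would set up the conditional-expectation decomposition on $S_n$. Write $E_n$ for the ergodic average over $S_n$ and, for $1\le k\le n$, let $E^{(k)}$ be the conditional expectation fixing the position of the symbol $k$ (equivalently, averaging over the copy of $S_{n-1}$ permuting the remaining symbols); thus $E^{(n)}$ projects onto functions of $\sigma(n)$ only, and composing the $E^{(k)}$ telescopes down from $\mathcal N=L_\infty(S_n)$ to $\mathcal N_{\fix}=\mathbb C$. Using Lemma~\ref{nme} (the Bregman chain rule for relative entropy along a tower of conditional expectations), I would write
\begin{align*}
d^{p}_{\mathcal N_{\fix}}(\rho)=\sum_{k} d^{p}\bigl(E^{(\ge k+1)}(\rho)\,\big\|\,E^{(\ge k)}(\rho)\bigr),
\end{align*}
where $E^{(\ge k)}$ is the composite projection, and each summand is an $S_{n-1}$-type relative entropy in the fibres. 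Applying the inductive hypothesis $\cpsi(\Delta_{n-1})\ge p$ fibrewise bounds each term by $p^{-1}$ times a fibrewise $p$-Fisher information, and the remaining task is to sum these fibre Dirichlet forms and compare with $I^{p}_{\Delta_n}(\rho)$. This is where the combinatorics of the transposition generators enters: one must show $\sum_k (\text{fibre Fisher information of }E^{(k)})$ is dominated (up to the constant absorbed into the $n\to n-1$ scaling of $\Delta$) by the full Fisher information $\tau(\Delta_n(\rho)\,(\rho)^{p-1})$, using that a transposition $(ij)$ in $S_n$ either lies in one of the fibre subgroups or connects two fibres in a controlled way. The completeness (i.e. tensoring with an arbitrary $\mathcal M$) is automatic throughout because Lemma~\ref{nme}, Corollary~\ref{pfc} (convexity of $I^p$), and the data processing inequality Theorem~\ref{dpi} all hold verbatim for $\Delta_n\otimes \id_{\mathcal M}$ and the fibre decomposition commutes with $\id_{\mathcal M}$.

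The main obstacle I expect is the fibre-Fisher-information comparison step: in the classical Lee--Yau argument one exploits pointwise inequalities between Dirichlet forms that rely on the commutativity of the underlying algebra, whereas here the $p$-Fisher information is $\tau(\delta(\rho)\,Q^{\rho}_{f^{[2]}}(\delta(\rho)))$ with a genuinely noncommutative Schur multiplier $Q^{\rho}_{f^{[2]}}$. Making the martingale recursion go through will require either a matrix version of the Lee--Yau combinatorial identity or, more robustly, re-deriving the recursion at the level of the Ricci curvature lower bound $\Rc\ge 4/n$ from \cite{emt, fm} and invoking Theorem~\ref{main}, which already gives $\cpsi(\Delta_n)\ge 2\cdot\tfrac{4}{n}=\tfrac{8}{n}$; this last bound, however, is weaker than $p$ for large $n$, so the genuinely dimension-free constant $p$ must come from the induction rather than from curvature. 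I would therefore carry out the telescoping identity carefully, isolate exactly which scalar inequality from \cite{ly98} is used, and verify it survives the passage to the operator-convex function $x^p$ using the joint convexity statements already established (Corollary~\ref{pfc} and the jointly-convex $d^p$). If that scalar step does not lift cleanly, the fallback is to prove only the slightly weaker statement with an explicit $n$-independent constant coming from a quantitative form of the martingale bound, which still suffices for the stated $p\le\cpsi(\Delta_n)$.
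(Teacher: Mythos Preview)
Your overall plan---martingale decomposition plus induction on $n$---matches the paper, but the execution differs in two places and one of your shortcuts is incorrect.

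First, the paper does not telescope through a full tower $E^{(\ge k)}$. It performs a \emph{single} martingale step on $S_{n+1}$: for each site $i$, $d^p(f\|E(f))=d^p(f\|E_{\mathcal N_i}(f))+d^p(E_{\mathcal N_i}(f)\|E(f))$, and then averages over $i$. The first term is handled fibrewise by the inductive hypothesis $\cpsi(\Delta_n)$ (conditioning on $\sigma_i=j$ leaves an $S_n$ problem), and the combinatorial identity $\sum_{i,j}\sum_{\{\sigma_i=\sigma_i^{kl}=j\}}(\cdots)=(n-1)\sum_{\sigma,k,l}(\cdots)$ produces the factor $\tfrac{n-1}{n}$ in the recursion. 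Your full-telescope version forces you to compare the Fisher information of already-averaged densities $E^{(\ge k+1)}(\rho)$ to that of $\rho$ at every level, which is harder and unnecessary; the one-step-and-average structure is what yields the clean recursion $\tfrac{1}{\cpsi(\Delta_{n+1})}\le\tfrac{n-1}{n\,\cpsi(\Delta_n)}+\tfrac{1}{p(n+1)}$.

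Second, the ``main obstacle'' you flag is resolved not by a matrix Lee--Yau identity but by a single convexity statement, Lemma~\ref{rtc}: $F(\rho,\sigma)=\tau[(\rho-\sigma)(\rho^{p-1}-\sigma^{p-1})]$ is jointly convex on $\mathcal M_+\times\mathcal M_+$, which is exactly Corollary~\ref{pfc} after rewriting $F=\tfrac{8}{p}I^p_{\delta}$ for the two-point derivation. This joint convexity is what lets the second term $d^p(E_{\mathcal N_i}(f)\|E(f))$---after Lemma~\ref{rtl}, i.e.\ $\cpsi(I-E)\ge p$---be dominated by the average of the Dirichlet contributions, giving the $\tfrac{1}{p(n+1)}$ term. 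No curvature fallback is needed; the induction already gives the dimension-free constant $p$.

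Finally, you cannot quote $\clsi(\Delta_n)\ge 1$ from \cite{gao}: that reference establishes only the scalar $\mlsi$, not the complete (matrix-valued) inequality. The paper obtains the $\clsi$ bound by rerunning the same argument with $f(x)=x\ln x$; the ingredients (Lemmas~\ref{nme}, \ref{rtl}, \ref{rtc}) all have $\clsi$ analogues.
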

\noindent As pointed out by \cite{bt06} (Section 4), the upper bound of $\mpsi(\Delta)$ is also given by the spectral gap. It is sufficient to give the lower bound.  Let $\mathcal{M}$ be a finite von Neumann algebra equipped with a normal faithful trace $\tau$, and we consider $\mathcal{M}$-valued function. For $f\in\ell_{\infty}^{m}$, let $\tau(f)=\frac{1}{m}\sum_{j=1}^{m}\tau(f(j))$.
\begin{lemma}\label{rtl} For any $\mathcal{M}$-valued function $f$ defined over $\{1,\dots, n\}$ and $n\geq 2$, we have
$$\tau(f^{p}-E(f))\leq \frac{1}{2n^2}\sum_{i,j=1}^{n}\tau\left[ \left(f(i)-f(j)\right)\left({f(i)}^{p-1}-{f(j)}^{p-1}\right)\right],$$
where $E(f)=\frac{1}{n}\sum_{i=1}^{n}f(i)$.
\end{lemma}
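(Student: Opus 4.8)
The plan is to reduce the $n$-variable inequality to its two-variable analogue. The key observation is that, after expanding $\tau(f^p - E(f)^p)$ — note that the stated left side should read $\tau(f^p) - \tau(E(f)^p)$, which is $d^p$ relative to the conditional expectation $E$ by \eqref{dpc} — the quantity is a sum over pairs, and one hopes for a pointwise comparison. Concretely, I would first record the single-pair estimate: for any two positive operators $a,b\in\mathcal{M}_+$,
\begin{align}\label{twopt}
\tau\!\left(a^p + b^p - 2\Big(\tfrac{a+b}{2}\Big)^{p}\right)\ \le\ \tfrac{1}{2}\,\tau\!\left((a-b)(a^{p-1}-b^{p-1})\right).
\end{align}
This is the $m=2$, $n=2$ case, and it follows from convexity of $x\mapsto x^p$ together with the integral representation of $d^p$: indeed $\tau(a^p+b^p-2c^p)$ with $c=\tfrac{a+b}{2}$ equals $d^p(a\|c)+d^p(b\|c)$ after cancelling the first-order terms (they cancel because $a-c=-(b-c)$), and each Bregman term is controlled by the corresponding difference-quotient pairing. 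The right side of \eqref{twopt} is exactly $\tfrac12\gamma^{f^{[2]}}$-type quantity evaluated at $a-b$, and the comparison $d^p(a\|c)+d^p(b\|c)\le \tfrac12\tau((a-b)(a^{p-1}-b^{p-1}))$ is a scalar-function inequality for $x^p$ that can be checked by the same convexity estimate used in Theorem \ref{dpi}.

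Next I would sum \eqref{twopt} over all pairs $(i,j)$ with the value $a=f(i)$, $b=f(j)$. The right-hand side becomes precisely $\tfrac{1}{2n^2}\sum_{i,j}\tau[(f(i)-f(j))(f(i)^{p-1}-f(j)^{p-1})]$ after dividing by $n^2$. For the left-hand side, summing $\tau(f(i)^p + f(j)^p)$ over all $i,j$ and dividing by $n^2$ gives $2\tau(f^p)$ in the normalized trace notation $\tau(f)=\tfrac1n\sum\tau(f(j))$. The remaining term $-\tfrac{2}{n^2}\sum_{i,j}\tau\big((\tfrac{f(i)+f(j)}{2})^p\big)$ must be bounded below by $-2\tau(E(f)^p)$; equivalently I need
\begin{align*}
\frac{1}{n^2}\sum_{i,j=1}^n \tau\!\left(\Big(\tfrac{f(i)+f(j)}{2}\Big)^{p}\right)\ \ge\ \tau\big(E(f)^p\big)=\tau\big((\tfrac1n\textstyle\sum_i f(i))^p\big).
\end{align*}
This is Jensen's inequality for the operator-convex... no — $x^p$ with $p\in(1,2)$ is operator convex, so $\big(\tfrac1{n^2}\sum_{i,j}\tfrac{f(i)+f(j)}{2}\big)^p \preceq \tfrac1{n^2}\sum_{i,j}\big(\tfrac{f(i)+f(j)}{2}\big)^p$ in the operator order, and the left side of that is $E(f)^p$; applying $\tau$ gives exactly what is needed.

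Chaining these: $2\tau(f^p) - 2\tau(E(f)^p) \le \tfrac{1}{2n^2}\sum_{i,j}\tau[(f(i)-f(j))(f(i)^{p-1}-f(j)^{p-1})]$, and dividing by $2$ yields the claim (with the left side understood as $\tau(f^p)-\tau(E(f)^p)=\tau(f^p-E(f)^p)$). I expect the main obstacle to be the single-pair estimate \eqref{twopt}: getting the constant $\tfrac12$ sharp requires care, since the naive bound $d^p(a\|c)\le \tau((a-c)(a^{p-1}-c^{p-1}))$ would only give constant $1$ after noting $a-c=\tfrac{a-b}{2}$. The correct route is to use the symmetrized integral representation $d^p(a\|c)+d^p(b\|c)=\int_0^1\!\!\int_0^1 \text{(second-order terms)}$ and exploit that, by symmetry of the two Bregman terms about the midpoint, the double integral collapses to $\tfrac12\int_0^1 \tau\big((a-b)\,Q^{a(t)}_{f^{[2]}}(a-b)\big)\,dt$-type expression which is then dominated by evaluating at the endpoints — precisely the monotonicity packaged in $f^{[2]}\in\mathfrak{C}^+$ applied fiberwise over the $m$-point space, exactly as in Lemma \ref{fip}. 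Everything else is bookkeeping with the normalized trace.
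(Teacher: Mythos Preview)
Your chain breaks at the Jensen step, and in fact your own text records the contradiction. After summing the two-point estimate and dividing by $n^{2}$ you have
\[
2\tau(f^{p})-\frac{2}{n^{2}}\sum_{i,j}\tau\Big(\big(\tfrac{f(i)+f(j)}{2}\big)^{p}\Big)\ \le\ \frac{1}{2n^{2}}\sum_{i,j}\tau\big[(f(i)-f(j))(f(i)^{p-1}-f(j)^{p-1})\big].
\]
To deduce the lemma you would need the left side to \emph{dominate} $2\big(\tau(f^{p})-\tau(E(f)^{p})\big)$, i.e.\ you would need $\frac{1}{n^{2}}\sum_{i,j}\tau\big((\tfrac{f(i)+f(j)}{2})^{p}\big)\le \tau(E(f)^{p})$. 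But operator convexity of $x\mapsto x^{p}$ gives exactly the reverse inequality, as you yourself write (``equivalently I need $\ge$''). With the correct Jensen direction the summed left side is only a \emph{lower} bound for $2(\tau(f^{p})-\tau(E(f)^{p}))$, and two upper bounds on the same quantity do not combine. So the bootstrap from $n=2$ to general $n$ via pairwise midpoints cannot work as written; this is a genuine gap, not a constant issue.

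The paper's argument is a single citation: the inequality is precisely $\cpsi(I-E)\ge p$ applied on $\ell_{\infty}^{n}\otimes\mathcal{M}$, which is taken from \cite{LJL}. If you want a self-contained route, note that expanding the right-hand side of the lemma gives
\[
\frac{1}{2n^{2}}\sum_{i,j}\tau\big[(f(i)-f(j))(f(i)^{p-1}-f(j)^{p-1})\big]
=\tau(f^{p})-\tau\big(E(f)\,E(f^{p-1})\big),
\]
so the lemma is equivalent to $\tau\big(E(f)\,E(f^{p-1})\big)\le \tau\big(E(f)^{p}\big)$. Since $x\mapsto x^{p-1}$ is operator concave for $p-1\in(0,1)$, the operator Jensen inequality for the conditional expectation $E$ yields $E(f^{p-1})\le E(f)^{p-1}$; conjugating by $E(f)^{1/2}\ge 0$ and taking the trace gives the claim. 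This replaces your failed midpoint-averaging step with a single application of concavity in the right direction.
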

\noindent This lemma is an immediate application of $\cpsi(I-E)\geq p$ (\cite{LJL}).
\noindent The scalar case of the following lemma was proven in \cite{bt06}, and here we give an operator valued version. 
\begin{lemma} \label{rtc} Let $p\in(1,2)$  and $F(\rho,\sigma)=\tau\left[(\rho-\sigma)(\rho^{p-1}-\sigma^{p-1})\right]$, then $F$ is jointly convex for $x,y\in\mathcal{M}_{+}$.
\end{lemma}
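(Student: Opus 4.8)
The plan is to recognize $F(\rho,\sigma)=\tau\left[(\rho-\sigma)(\rho^{p-1}-\sigma^{p-1})\right]$ as essentially a $p$-relative-entropy-type quantity and to deduce its joint convexity from the convexity results already established for the generalized monotone metrics. First I would expand
$F(\rho,\sigma)=\tau(\rho^{p})+\tau(\sigma^{p})-\tau(\rho\sigma^{p-1})-\tau(\sigma\rho^{p-1})$,
and observe that the two terms $\tau(\rho^{p})$ and $\tau(\sigma^{p})$ are convex in $\rho$ and $\sigma$ separately (hence jointly), since $x\mapsto x^{p}$ is operator convex for $p\in(1,2)$ and the trace preserves this. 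So the whole problem reduces to the joint convexity of the ``cross'' term $G(\rho,\sigma)=\tau(\rho\sigma^{p-1})+\tau(\sigma\rho^{p-1})$, where I need \emph{concavity}, i.e. that $-G$ is jointly convex, or rather that $G$ is jointly concave. This is exactly the content of Lieb's concavity theorem: for $0<p-1<1$, the map $(\rho,\sigma)\mapsto \tau(\rho\,\sigma^{p-1})$ — more precisely $\tau(A^{*}\rho^{s}A\sigma^{t})$ with $s+t\le 1$ — is jointly concave. Here $s=1$ is too large, but the symmetric combination $\tau(\rho\sigma^{p-1})+\tau(\sigma\rho^{p-1})$ with exponents summing to $p\in(1,2)$ is handled by the generalized Lieb concavity theorem (Theorem \ref{HPconvex}) together with the relation $I_{\delta}^{f}(\rho)=\gamma^{f^{[2]}}_{\rho,\rho}(\delta(\rho),\delta(\rho))$ and the identity $H''_{\rho,\sigma}(t)=\gamma^{f^{[2]}}_{a(t),a(t)}(\rho-\sigma,\rho-\sigma)$ from the proof of Theorem \ref{dpi}.

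Concretely, the cleanest route is to reuse the machinery of Theorem \ref{dpi}: with $f(x)=x^{p}$ one has $f^{[2]}(x,y)=\frac{p\,x^{p-1}-p\,y^{p-1}}{x-y}\in\mathfrak{C}^{+}$ (shown in the Example following Proposition \ref{cp}, since $f'(x)=px^{p-1}$ and $f^{[1]}$ for $x^{p-1}$ lies in $\mathfrak{C}^{+}$), and the integral identity
$d^{p}(\rho\|\sigma)=\int_{0}^{1}(1-t)\,\gamma^{f^{[2]}}_{a(t),a(t)}(\rho-\sigma,\rho-\sigma)\,dt$
with $a(t)=(1-t)\sigma+t\rho$ exhibits $d^{p}(\rho\|\sigma)$ as a positive integral of jointly convex integrands (Theorem \ref{tgmc} applies because $\lambda f^{[2]}(\lambda x,\lambda y)=\lambda\cdot\lambda^{p-2}f^{[2]}(x,y)\le f^{[2]}(x,y)$ for $\lambda\in[0,1]$ as $p-1<1$), hence $d^{p}(\rho\|\sigma)$ is jointly convex — this is already recorded in the Corollary after \eqref{dpc}. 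Now I would simply relate $F$ to $d^{p}$: from \eqref{dp}, $d^{p}(\rho\|\sigma)=\tau(\rho^{p}-\sigma^{p}-p(\rho-\sigma)\sigma^{p-1})$, and by symmetry $d^{p}(\sigma\|\rho)=\tau(\sigma^{p}-\rho^{p}-p(\sigma-\rho)\rho^{p-1})$, so that
$d^{p}(\rho\|\sigma)+d^{p}(\sigma\|\rho)=p\,\tau\left[(\rho-\sigma)(\rho^{p-1}-\sigma^{p-1})\right]=p\,F(\rho,\sigma)$.
Thus $F=\tfrac1p\bigl(d^{p}(\rho\|\sigma)+d^{p}(\sigma\|\rho)\bigr)$ is a sum of two jointly convex functions and is therefore jointly convex, which is the claim.

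The main obstacle — and the only place care is genuinely needed — is justifying the joint convexity of $d^{p}(\rho\|\sigma)$ at the level of the operator-valued (non-commutative) setting, i.e. verifying the hypothesis $d^{f}(\lambda\rho\|\lambda\sigma)\le\lambda d^{f}(\rho\|\sigma)$ invoked in the Corollary after \eqref{dpc}, and ensuring the perturbation argument $\sigma\mapsto\sigma+\epsilon 1$ is harmless when $\rho,\sigma$ are merely positive rather than invertible (as flagged in the remarks after Theorem \ref{HPconvex}). For $f(x)=x^{p}$ the scaling identity is immediate since $d^{p}(\lambda\rho\|\lambda\sigma)=\lambda^{p}d^{p}(\rho\|\sigma)\le\lambda d^{p}(\rho\|\sigma)$ for $\lambda\in[0,1]$ because $p>1$ and $d^{p}\ge0$; and the $\epsilon\to0^{+}$ limit is controlled by the integral representation $d^{f}(\rho\|\sigma)=\lim_{\epsilon\to0^{+}}d^{f}(\rho\|\sigma+\epsilon 1)$ recorded in Section 3.3. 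So in fact everything needed is already in place, and the proof amounts to the algebraic identity $pF=d^{p}(\rho\|\sigma)+d^{p}(\sigma\|\rho)$ plus an invocation of the already-proved joint convexity of $d^{p}$.
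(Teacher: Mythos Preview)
Your argument is correct. The algebraic identity
\[
d^{p}(\rho\|\sigma)+d^{p}(\sigma\|\rho)=p\,\tau\!\left[(\rho-\sigma)(\rho^{p-1}-\sigma^{p-1})\right]=p\,F(\rho,\sigma)
\]
is easily verified from \eqref{dp}, and the joint convexity of $d^{p}$ is exactly the content of the Corollary recorded after \eqref{dpc}; your check of the scaling hypothesis $d^{p}(\lambda\rho\|\lambda\sigma)=\lambda^{p}d^{p}(\rho\|\sigma)\le\lambda\,d^{p}(\rho\|\sigma)$ is also fine. (The opening paragraph about Lieb concavity is unnecessary and a little muddled, but you abandon it for the clean route, so no harm done.)

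The paper, however, takes a different route: it views the pair $(\rho,\sigma)$ as a single element $f$ of $\ell_{\infty}^{2}\otimes\mathcal{M}$, introduces the two-point derivation $\delta$ associated to $I-E$ with $E(f)=\tfrac12(\rho+\sigma)$, and observes that $F(\rho,\sigma)=\tfrac{8}{p}\,I_{\delta}^{p}(f)$. Joint convexity in $(\rho,\sigma)$ is then convexity of the $p$-Fisher information in $f$, which is Corollary~\ref{pfc}. Both arguments ultimately rest on the same monotone-metric convexity (Theorem~\ref{tgmc}); the paper packages $F$ as a Fisher information on a two-point space, while you package it as a symmetrized Bregman divergence. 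Your route has the advantage of avoiding the auxiliary derivation and making the link to $d^{p}$ explicit; the paper's route is a one-line reduction once Corollary~\ref{pfc} is in hand and foreshadows how $F$ will be used as a Fisher-type quantity in the subsequent martingale estimates.
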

\begin{proof} Let $f=(\rho,\sigma)$ and  $E(f)=\frac{1}{2}(\rho+\sigma)$, then $\delta(f)=\frac{1}{4}(f(2)-f(1), f(1)-f(2))$ and $\delta^{*}\delta=I-E$. We can rewrite $F$ as  $F(\rho,\sigma)=\frac{8}{p}I_{\delta}^{p}(f)$. By Corollary \ref{pfc} $F$ is jointly convex.
\end{proof}
\noindent Now we prove Theorem \ref{rt}.
\begin{proof}
Let $\mathcal{N}_{i}\subset L_{\infty}(S_{n+1},\mathcal{M})$ be a von Neumann subalgebra generated by $\{e_{i}^{j}\}_{j=1}^{n+1}$ satisfying 
$$e_{i}^{j}(\sigma)=\begin{cases}&1,\quad \text{if}\quad \sigma_{i}=j;\\&0, \quad \text{otherwise}.\end{cases}$$
We denote the corresponding conditional expectation by $E_{\mathcal{N}_{i}}$. By martingale equality, we have 
\begin{align*}
d^{p}(f\| E(f))=d^{p}(f\| E_{\mathcal{N}_{i}}(f))+d^{p}( E_{\mathcal{N}_{i}}(f)\| E(f)  ).
\end{align*}
Since $i$ is also uniformly chosen from from the $n+1$ sites, then 
\begin{align}
d^{p}(f\| E(f))=\frac{1}{n+1}\sum_{i=1}^{n+1}\Big[d^{p}(f\| E_{\mathcal{N}_{i}}(f))+d^{p}( E_{\mathcal{N}_{i}}(f)\| E(f)  )\Big].
\end{align}
For any fixed $i$, we define
\begin{align*}
f_{i}(j)&=\sum_{\sigma_{i}=j}f(\sigma).
\end{align*}
Let $$E_{ij}(f)(\sigma)=\frac{1}{n!}\begin{cases}
f_{i}(j),&\quad \text{if}\quad \sigma_{i}=j;  \\ 0,&\quad\text{otherwise},\end{cases} $$
then $$E_{\mathcal{N}_{i}}(f)=\frac{1}{n+1}\sum_{j=1}^{n+1}E_{ij}(f).$$ 
Let us define a projection map $$P_{ij}(f)(\sigma)=\begin{cases}f(\sigma), &\text{if}\quad \sigma_{i}=j; \\0, &\text{otherwise}, \end{cases}$$ then
\begin{align*}
d^{p}(f\|E_{\mathcal{N}_{i}}(f))=\frac{1}{(n+1)!}\sum_{j=1}^{n+1} \tau\left(P_{ij}(f)^{p}-E_{ij}(f)^{p}\right)
\end{align*}
Also by our assumption \begin{align*}&\frac{\cpsi(\Delta_{n})}{(n+1)!}  \tau\left(P_{ij}(f)^{p}-E_{ij}(f)^{p}\right) \\&\leq \frac{p}{(n+1)!2n}\sum_{\{(\sigma,k,l)| \sigma_{i}=\sigma^{kl}_{i}=j\}}\tau\left[ \left(f(\sigma^{kl})-f(\sigma)\right)\left( f(\sigma^{kl})^{p-1}-f(\sigma)^{p-1} \right) \right].
\end{align*}
It is important to observe that 
\begin{align*}
&\sum_{i=1}^{n+1}\sum_{j=1}^{n+1}\sum_{\{(\sigma,k,l)| \sigma_{i}=\sigma^{kl}_{i}=j\}}\tau\left[ \left(f(\sigma^{kl})-f(\sigma)\right)\left( f(\sigma^{kl})^{p-1}-f(\sigma)^{p-1} \right) \right]\\
=&(n-1)\sum_{\sigma,k,l}\tau\left[ \left(f(\sigma^{kl})-f(\sigma)\right)\left( f(\sigma^{kl})^{p-1}-f(\sigma)^{p-1} \right) \right].
\end{align*}
Thus
\begin{align}\label{rti1} \frac{1}{n+1}\sum_{i=1}^{n+1}d^{p}(f\| E_{\mathcal{N}_{i}}(f))\leq \frac{n-1}{n \cpsi(\Delta_{n})} I_{\Delta_{n+1}}^{p}(f).
\end{align}
Applying Lemma \ref{rtl}, we obtain that
$$d^{p}( E_{\mathcal{N}_{i}}(f)\| E(f))\leq\frac{1}{2(n+1)^{2}}\sum_{k,l=1}^{n+1}\tau\Big[ \left(E_{ik}(f)-E_{il}(f)\right)\left(E_{ij}(f)^{p-1}-E_{il}(f)^{p-1}\right)\Big].$$
By the definition of $E_{ik}$, we have 
\begin{align*}
&\left(E_{ik}(f)-E_{il}(f)\right)\left(E_{ij}(f)^{p-1}-E_{il}(f)^{p-1}\right)\\ =
&\left(\frac{1}{n!}\sum_{\sigma_{i}=l}f(\sigma^{kl})-\frac{1}{n!}\sum_{\sigma_{i}=l}f(\sigma)\right) \left(\left(\frac{1}{n!}\sum_{\sigma_{i}=l}f(\sigma^{kl})\right)^{p-1}-\left(\frac{1}{n!}\sum_{\sigma_{i}=l}f(\sigma)\right)^{p-1}\right).
\end{align*}
 Together with Lemma \ref{rtc}, it implies that
 \begin{align*}
 &\tau\left[\left(E_{ik}(f)-E_{il}(f)\right)\left(E_{ij}(f)^{p-1}-E_{il}(f)^{p-1}\right)\right]\\ \leq &\frac{1}{n!}\sum_{\sigma_{i}=l}\tau\left[(f(\sigma^{kl})-f(\sigma))(f(\sigma^{kl})^{p-1}-f(\sigma)^{p-1}) \right].
 \end{align*}
 Then we have \begin{align*} &d^{p}( E_{\mathcal{N}_{i}}(f)\| E(f))\\\leq& \frac{1}{2(n+1)^{2}n!}\sum_{k,l=1}^{n+1}\sum_{\sigma_{i}=l}\tau\left[(f(\sigma^{kl})-f(\sigma))\left(f(\sigma^{kl})^{p-1}-f(\sigma)^{p-1}\right) \right].\end{align*}Thus
 \begin{align}\label{rti2}\frac{1}{n+1}\sum_{i=1}^{n+1}d^{p}( E_{\mathcal{N}_{i}}(f)\| E(f))\leq \frac{1}{(n+1)p}I_{\Delta_{n+1}}^{p}(f). 
 \end{align}
 Combining \eqref{rti1} and \eqref{rti2}, we obtain 
$$\frac{1}{\cpsi(\Delta_{n+1})}\leq \frac{n-1}{n\cpsi(\Delta_{n})}+\frac{1}{p(n+1)} .$$
Lemma \ref{rtl} implies that 
$\cpsi(\Delta_{2})\geq 2p.$ By induction method, we have
$$\cpsi(\Delta_{n+1})\geq p.$$
Indeed, by assuming that $\cpsi(\Delta_{n})\geq p$ we obtain that  $$\frac{1}{\cpsi(\Delta_{n+1})}\leq \frac{1}{p}\left(\frac{n-1}{n}+\frac{1}{n+1}\right)\leq \frac{1}{p}.$$ 
We only prove the estimate for $\cpsi$ and the argument remains true for $\clsi$.
\end{proof} 
\subsection{Bernoulli-Laplace Model}
We consider the Bernoulli-Laplace model with $n$ distinct sites $\{1,\dots, n\}$ and $r$ identical particles, where $n\geq 2$ and $1\leq r\leq n-1$. Each site can be occupied by at most $1$ particle. Let $C_{n,r}$ be the state space of the configurations of $r$ elements occupying $n$ sites. The Laplace operator $\Delta_{n,r}: L_{\infty}(C_{n,r})\to L_{\infty}$ is defined by 
\begin{align*}
(\Delta_{n,r}f)(\sigma)=\frac{1}{n}\sum_{i< j} \left[f(\sigma)-f(\sigma^{ij}) \right].
\end{align*}
Again $\sigma^{ij}$ is the configuration of $\sigma$ after we swap the $i$-th site and the $j$-th site. Let $\sigma_{i}$ denote the number of particles occupying the $i$-th site. The lower bound of the Ricci curvature of the BL model was also studied in \cite{emt} and \cite{fm}. \cite{goel}, \cite{bt06}, and \cite{gao} proved that
\begin{align*}
1/2\leq \mlsi(\Delta_{n,r}) \leq 2
\end{align*}
Again we use the noncommutative martingale method and obtain a similar estimate.
\begin{theorem} \label{bl} Let $p\in(1,2)$, then $$p/2\leq \cpsi(\Delta_{n,r})\leq 2 \quad \text{and} \quad 1/2\leq \clsi(\Delta_{n,r})\leq 2$$ for any $n\geq 2$ and $1\leq r\leq n-1$.
\end{theorem}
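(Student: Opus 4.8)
The plan is to mimic exactly the martingale induction carried out for the random transposition model in Theorem \ref{rt}, replacing the symmetric group $S_n$ by the configuration spaces $C_{n,r}$ and tracking how the combinatorial bookkeeping changes. The upper bound $\cpsi(\Delta_{n,r})\le 2$ (and $\clsi(\Delta_{n,r})\le 2$) is again dictated by the spectral gap $\lambda_2(\Delta_{n,r})=1$: for $\cpsi$, one lets $p\to 2^+$ to recover the Poincar\'e inequality, whose optimal constant cannot exceed the spectral gap, and similarly for $\clsi$ via the $p\to 1^+$ limit; this is the observation attributed to \cite{bt06}. So the work is entirely in the lower bounds $\cpsi(\Delta_{n,r})\ge p/2$ and $\clsi(\Delta_{n,r})\ge 1/2$.

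For the lower bound I would set up a conditioning on the occupation status of a single site. Fix a site $i\in\{1,\dots,n\}$ and let $\mathcal{N}_i\subset L_\infty(C_{n,r},\mathcal{M})$ be the subalgebra of functions depending only on whether site $i$ is occupied ($\sigma_i=1$ or $\sigma_i=0$); write $E_{\mathcal{N}_i}$ for the associated conditional expectation. The martingale/chain-rule identity from Lemma \ref{nme} gives
\begin{align*}
d^p(f\|E(f))=d^p(f\|E_{\mathcal{N}_i}(f))+d^p(E_{\mathcal{N}_i}(f)\|E(f)),
\end{align*}
and averaging over $i$ uniformly chosen from the $n$ sites (weighting appropriately by the sizes $r$ and $n-r$ of the two fibers, since site $i$ is occupied with probability $r/n$) yields the analogue of the two-term decomposition used in Theorem \ref{rt}. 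The first term, $d^p(f\|E_{\mathcal{N}_i}(f))$, after restricting to each fiber is a Bregman entropy on a configuration space isomorphic to $C_{n-1,r-1}$ (site $i$ occupied) or $C_{n-1,r}$ (site $i$ empty), so it is controlled by $\cpsi(\Delta_{n-1,r-1})$ and $\cpsi(\Delta_{n-1,r})$ via the inductive hypothesis, up to a combinatorial prefactor counting how swap pairs within a fiber embed into swap pairs of the full model. The second term, $d^p(E_{\mathcal{N}_i}(f)\|E(f))$, is a two-point Bregman entropy between the two fiber-averages and is handled by the operator-valued convexity input: the two-point inequality $\cpsi(I-E)\ge p$ from \cite{LJL} (the analogue of Lemma \ref{rtl}) combined with the joint convexity of $F(\rho,\sigma)=\tau[(\rho-\sigma)(\rho^{p-1}-\sigma^{p-1})]$ from Lemma \ref{rtc} (which lets one pull the fiber-average outside the quadratic-type form). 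Both terms are then bounded by multiples of $I_{\Delta_{n,r}}^p(f)$, producing a recursion of the shape
\begin{align*}
\frac{1}{\cpsi(\Delta_{n,r})}\le \frac{a(n,r)}{\cpsi(\Delta_{n-1,r-1})}+\frac{b(n,r)}{\cpsi(\Delta_{n-1,r})}+\frac{c(n,r)}{p},
\end{align*}
with $a,b,c$ explicit rational functions, together with a base case (the two-site, one-particle model gives $\cpsi(\Delta_{2,1})\ge p$, essentially $\cpsi(I-E)\ge p$). One then checks by a double induction on $n$ (and $r$) that $a+b+c\le 2/p$ at every step, yielding $\cpsi(\Delta_{n,r})\ge p/2$; the $\clsi$ bound follows by the same argument with $f(x)=x\ln x$ in place of $f(x)=x^p$, or by letting $p\to 1^+$.

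The main obstacle I expect is getting the combinatorial coefficients $a(n,r),b(n,r),c(n,r)$ right and verifying the inequality $a+b+c\le 2/p$ uniformly in the admissible range $1\le r\le n-1$. Unlike the random transposition case, where conditioning on a site always produces a copy of the same family $\{\Delta_n\}$ shifted by one, here conditioning splits into \emph{two} smaller models ($C_{n-1,r-1}$ and $C_{n-1,r}$) with different weights, so the induction is genuinely two-dimensional and one must be careful that the recursion closes — in particular the edge cases $r=1$ and $r=n-1$, where one of the two fibers is trivial, need separate attention. A secondary technical point is that the embedding of swap-pairs of a fiber into swap-pairs of $C_{n,r}$ is not a clean "$(n-1)$-to-$1$ covering" as in the symmetric group computation (equation between \eqref{rti1} and the display preceding it), because a transposition $\sigma\mapsto\sigma^{kl}$ fixing the occupation of site $i$ requires $k,l\ne i$ \emph{or} $\{k,l\}$ to swap two equally-occupied sites; counting these correctly — and matching the normalization $\tfrac1n$ in the definition of $\Delta_{n,r}$ against the $\tfrac1{n-1}$ in $\Delta_{n-1,\cdot}$ — is where the bookkeeping is most error-prone. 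Everything else (the chain rule, data processing, joint convexity, the passage from entropy decay to the Fisher-information inequality) is already in place from Sections 2–3 and from Theorem \ref{rt}.
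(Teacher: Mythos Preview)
Your proposal is correct and is essentially the paper's own argument: condition on the occupation of a single site, apply the martingale identity of Lemma~\ref{nme}, bound the inner term by the inductive hypothesis on $\Delta_{n-1,r}$ and $\Delta_{n-1,r-1}$, and bound the outer two-point term via $\cpsi(I-E)\ge p$ together with the joint convexity of Lemma~\ref{rtc}. Two places where the actual computation is simpler than you anticipate: the swap-pair covering is still a clean $(n-1)$-to-$1$ count because transpositions with $\sigma_k=\sigma_l$ act trivially and contribute zero to the Fisher information, and the paper collapses your separate coefficients $a,b$ into a single factor $\frac{n-1}{n}\cdot\frac{1}{\min_{j=0,1}\cpsi(\Delta_{n,r-j})}$, with base cases $\Delta_{m,1}=\Delta_{m,m-1}=I-E$ for every $m$ (not just $m=2$), so the two-dimensional induction closes without tracking $a(n,r)$ and $b(n,r)$ individually.
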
 
\noindent Again the upper bound of $\mpsi(\Delta)$ is also given by the spectral gap, see \cite{ds87}.  Let $\mathcal{M}$ be a finite von Neumann algebra equipped with a normal faithful trace $\tau$, and we consider $\mathcal{M}$-valued function. For $f\in\ell_{\infty}^{m}$, let $\tau(f)=\frac{1}{m}\sum_{j=1}^{m}\tau(f(j))$.
\noindent The proof of \ref{bl} is quite similar to the proof of \ref{rt}.  
\begin{proof} Let $\mathcal{N}_{i}\subset L_{\infty}(C_{n+1,r},\mathcal{M})$ be a von Neumann subalgebra generated by $\{e_{i}^{0},e_{i}^{1}\}$ defined by
\begin{align*}
e_{i}^{j}(\sigma)=\begin{cases} 1,&\quad\text{if} \quad \sigma_{i}=j;\\0,&\quad\text{otherwise},\end{cases} \quad j=0,1.
\end{align*}
Let $E_{\mathcal{N}_{i}}$ be the corresponding conditional expectation onto $\mathcal{N}_{i}$. 
By martingale equality, we have that 
\begin{align*}
d^{p}(f\| E(f))=d^{p}(f\| E_{\mathcal{N}_{i}}(f))+d^{p}( E_{\mathcal{N}_{i}}(f)\| E(f)  ).
\end{align*}
Since $i$ is uniformly chosen from from the $n+1$ sites, then 
\begin{align}
d^{p}(f\| E(f))=\frac{1}{n+1}\sum_{i=1}^{n+1}\Big[d^{p}(f\| E_{\mathcal{N}_{i}}(f))+d^{p}( E_{\mathcal{N}_{i}}(f)\| E(f)  )\Big].
\end{align}
For any fixed $i$, we define 
\begin{align*}
f_{i}(j)=\sum_{\sigma_{i}=j} f(\sigma), \quad j=0,1.
\end{align*}
Let 
\begin{align*}
E_{i,j}(f)(\sigma)=\frac{1}{a_{j}} \begin{cases} f_{i}(j), &\quad \text{if} \quad \sigma_{i}=j ;\\ 0,&\quad \text{otherwise}, \end{cases}
\end{align*} where  $a_{0}=\ {n-1\choose r}$ and $a_{1}={n-1 \choose r-1}.$
Let us define projections $$P_{i,j}(f)(\sigma)=\begin{cases}f(\sigma), &\text{if}\quad \sigma_{i}=j; \\0, &\text{otherwise}, \end{cases}\quad  j=0,1.$$ Then
\begin{align*}
d^{p}(f\|E_{\mathcal{N}_{i}}(f))= \frac{1}{{n+1\choose r}} \sum_{j=0,1} \tau\left[ P_{i,j}(f)^{p}-E_{i,j}(f)^{p}\right].
\end{align*}
By the definition of $\cpsi$, then we have 
\begin{align*}
&\cpsi(\Delta_{n,r-j})\tau\left[ P_{i,j}(f)^{p}-E_{i,j}(f)^{p}\right]\\\leq& \frac{p}{2n} \sum_{\{(\sigma,k,l)|\sigma_{i}=\sigma_{i}^{kl}=j\}}\tau\left[  \left(f(\sigma^{kl})-f(\sigma)\right)\left( f(\sigma^{kl})^{p-1}-f(\sigma)^{p-1} \right) \right].
\end{align*}
An important observation is that
\begin{align*}
&\sum_{i=1}^{n+1}\sum_{j=0,1}\sum_{\{(\sigma,k,l)| \sigma_{i}=\sigma^{kl}_{i}=j\}}\tau\left[ \left(f(\sigma^{kl})-f(\sigma)\right)\left( f(\sigma^{kl})^{p-1}-f(\sigma)^{p-1} \right) \right]\\
=&(n-1)\sum_{\sigma,k,l}\tau\left[ \left(f(\sigma^{kl})-f(\sigma)\right)\left( f(\sigma^{kl})^{p-1}-f(\sigma)^{p-1} \right) \right].
\end{align*}
Thus \begin{align}\label{bl1}\frac{1}{n+1}\sum_{i=1}^{n+1}d^{p}(f\| E_{\mathcal{N}_{i}}(f))\leq \frac{n-1}{n} \frac{1}{\min_{j=0,1}\{\cpsi(\Delta_{n,r-j})\}}I_{\Delta_{n+1,r}}(f).
\end{align}
Applying Lemma \ref{rtl}, we obtain that
\begin{align*}&d^{p}( E_{\mathcal{N}_{i}}(f)\| E(f))\\\leq&\frac{a_{0}a_{1}}{(a_{0}+a_{1})^{2}}\tau\Big[ \left(\frac{f_{i}(1)}{a_{1}}-\frac{f_{i}(0)}{a_{0}}\right)\left( \left(\frac{f_{i}(1)}{a_{1}}\right)^{p-1}-\left(\frac{f_{i}(0)}{a_{0}}\right)^{p-1}\right)\Big].\end{align*}
The definition of $f_{i}(j)$
 infers that
\begin{align*}
f_{i}(1)=&\sum_{\sigma_{i}=1}f(\sigma)=\frac{1}{(n-r+1)}\sum_{k=1}^{n+1}\sum_{\sigma_{k}=1,\sigma_{i}=0}f(\sigma^{ki}),\\
f_{i}(0)=&\sum_{\sigma_{i}=0}f(\sigma)=\frac{1}{r}\sum_{k=1}^{n+1}\sum_{\sigma_{k}=1,\sigma_{i}=0}f(\sigma).
\end{align*}
Together with Lemma \ref{rtc}, it implies that
 \begin{align*}
 &\tau\left[\left(E_{i,1}(f)-E_{i,0}(f)\right)\left(E_{i,1}(f)^{p-1}-E_{i,0}(f)^{p-1}\right)\right]\\ 
 \leq &\frac{(n-r)!(r-1)!}{n!}\sum_{k=1}^{n+1}\sum_{\sigma_{i}=0,\sigma_{k}=1}\tau\Big[\left(f(\sigma^{ik})-f(\sigma)\right)\left(f(\sigma^{ik})^{p-1}-f(\sigma)^{p-1}\right)\Big].
 \end{align*}
 Similarly
 \begin{align*}
  &\tau\left[\left(E_{i,1}(f)-E_{i,0}(f)\right)\left(E_{i,1}(f)^{p-1}-E_{i,0}(f)^{p-1}\right)\right]\\ 
 \leq &\frac{(n-r)!(r-1)!}{n!}\sum_{k=1}^{n+1}\sum_{\sigma_{i}=1,\sigma_{k}=0}\tau\Big[\left(f(\sigma^{ik})-f(\sigma)\right)\left(f(\sigma^{ik})^{p-1}-f(\sigma)^{p-1}\right)\Big].
 \end{align*}
Thus 
\begin{align*}&d^{p}( E_{\mathcal{N}_{i}}(f)\| E(f))\\\leq& \frac{a_{0}a_{1}}{2(a_{0}+a_{1})^{2}} \frac{(n-r)!(r-1)!}{n!}\sum_{k=1}^{n+1}\sum_{\sigma_{i}\neq \sigma_{k}}\tau\Big[\left(f(\sigma^{ik})-f(\sigma)\right)\left(f(\sigma^{ik})^{p-1}-f(\sigma)^{p-1}\right)\Big].
\end{align*}
 Then we have 
  \begin{align*}
\sum_{i=1}^{n+1} d^{p}( E_{\mathcal{N}_{i}}(f)\| E(f)) \leq\frac{2}{p} I_{\Delta_{n+1,r}}^{p}(f).
 \end{align*}
  Together with \eqref{bl1}, it implies that 
 \begin{align*}
 \frac{1}{\cpsi{\Delta_{n+1,r}}}\leq \frac{n-1}{n}\frac{1}{\min_{j=0,1}\{\cpsi(\Delta_{n,r-j})\}}+\frac{2}{(n+1)p}.
 \end{align*}
 Noting $\Delta_{n,1}=\Delta_{n,n-1}=I-E$, we obtain that $\cpsi(\Delta_{n,1})\geq p$ and $\cpsi(\Delta_{n,n-1})\geq p$. 
 By induction method, we have 
 $\cpsi(\Delta_{n+1,r})\geq \frac{p}{2}$. Indeed, let us assume that $\cpsi(\Delta_{n,r})\geq \frac{2}{p}$, then 
 $$\frac{1}{\cpsi(\Delta_{n+1,r})}\leq \frac{2}{p}-\left(\frac{1}{n}-\frac{1}{n+1}\right)\frac{2}{p}\leq\frac{2}{p}.$$
 The argument remains true for $\clsi$.
\end{proof}
\bibliographystyle{alpha}

\end{document}